\newtheorem{theorem}{Theorem}
\newtheorem{proposition}[theorem]{Proposition}
\newtheorem{corollary}[theorem]{Corollary}
\newtheorem{lemma}[theorem]{Lemma}
\newtheorem{fact}[theorem]{Fact}
\newtheorem{claim}[theorem]{Claim}
\newtheorem{Theorem}[theorem]{Theorem}
\newtheorem{Proposition}[theorem]{Proposition}
\newtheorem{Lemma}[theorem]{Lemma}
\newtheorem{Fact}[theorem]{Fact}
\newtheorem{prop}[theorem]{Proposition}
\newtheorem*{Claim}{Claim}
\newtheorem*{case1}{Case 1}
\newtheorem*{case2}{Case 2}
\theoremstyle{definition}
\newtheorem{definition}[theorem]{Definition}
\newtheorem{remark}[theorem]{Remark}
\newtheorem{question}[theorem]{Question}
\newtheorem{example}[theorem]{Example}
\newtheorem{rem/def}[theorem]{Remark/Definition}
\newtheorem{rem/not}[theorem]{Remark/Notation}
\newtheorem{fac/rem}[theorem]{Fact/Remark}
\newtheorem{notation}[theorem]{Notation}
\newtheorem{Remark}[theorem]{Remark}
\newtheorem{Definition}[theorem]{Definition}
\newtheorem{Remark/defn}[theorem]{\bf{Remark \& Definition}}
\newtheorem{Fact/Definition}[theorem]{Fact/Definition}
\numberwithin{theorem}{section}
\def\CL{{\mathcal L}}
\def\tp{\operatorname{tp}}
\def\qftp{\operatorname{qftp}}
\def\id{\operatorname{Id}}
\def\acl{\operatorname{acl}}
\def\dcl{\operatorname{dcl}}
\def\dim{\operatorname{dim}}
\def \treo {\omega{^{<\omega}}}
\def \trec {\kappa{^{<\lambda }}}
\def \tri {\unlhd}
\def \lex {<_{lex}}
\newcommand{\trn}{%
  \mathrel{\ooalign{$\lneq$\cr\raise.21ex\hbox{$\lhd$}\cr}}}
\newcommand{\trrn}{%
  \mathrel{\ooalign{$\gneq$\cr\raise.21ex\hbox{$\rhd$}\cr}}}
\def \coc {{^{\frown}}}
\def \lr {{\langle \rangle}}
\def \M {\mathbb{M}}
\newcommand{\cl}{\operatorname{cl}}
\newcommand{\im}{\operatorname{Im}}
\newcommand{\scl}{\operatorname{scl}}
\def\ACFO{\operatorname{ACFO}}
\def\ACF{\operatorname{ACF}}
\def\GM{\operatorname{GM}}
\def\GMO{\operatorname{GMO}}
\def\DLO{\operatorname{DLO}}
\def\trdeg{\operatorname{trdeg}}
\newcommand{\ind}[1][]{%
  \mathrel{
    \mathop{
      \vcenter{
        \hbox{\oalign{\noalign{\kern-.3ex}\hfil$\vert$\hfil\cr
              \noalign{\kern-.7ex}
              $\smile$\cr\noalign{\kern-.3ex}}}
      }
    }\displaylimits_{#1}
  }
}
\newcommand{\aclindep}[1][]{%
  \mathrel{
    \mathop{
      \vcenter{
        \hbox{\oalign{\noalign{\kern-3pt}\hfill\hspace{4pt}$\vert^a$\cr
              \noalign{\kern-.7ex}
              $\smile$\cr\noalign{\kern-.3ex}}}
      }
    }\displaylimits_{#1}
  }
}
\newcommand*\circled[1]{\tikz[baseline=(char.base)]{
            \node[shape=circle,draw,inner sep=1pt] (char) {#1};}}
\definecolor{dgreen}{rgb}{0.09, 0.45, 0.27}
\begin{document}

\title{Preservation of NATP}

\date{\today}

\author[J. Ahn]{JinHoo Ahn$^{\dagger}$}
\address{$\dagger$School of Computational Sciences, Korea Institute for Advanced Study\\ 85 Hoegiro, Dongdaemun-gu\\ Seoul, 02455, South Korea}
\email{jinhooahn@kias.re.kr}

\author[J. Kim]{Joonhee Kim$^{\ddagger}$}
\address{$\ddagger$School of Mathematics, Korea Institute for Advanced Study\\ 85 Hoegiro, Dongdaemun-gu\\ Seoul, 02455, South Korea}
\email{kimjoonhee@kias.re.kr}

\author[H. Lee]{Hyoyoon Lee$^{\ast}$}
\address{$^\ast$Department of Mathematics, Yonsei University\\ 50 Yonsei-Ro, Seodaemun-Gu\\ Seoul, 03722, South Korea}
\email{hyoyoonlee@yonsei.ac.kr}

\author[J. LEE]{Junguk Lee$^{\ast\ast}$}
\address{$^{\ast\ast}$Department of Mathematics, Changwon National University\\ 20 Changwondaehak-ro, Uichang-gu\\ Changwon-si, Gyeongsangnam-do, 51140, South Korea}
\email{ljwhayo@changwon.ac.kr}

\thanks{$^{\dagger}$The first author was supported by a KIAS Individual Grant (CG079801) at Korea Institute for Advanced Study. $^{\ddagger}$The second author was supported by a KIAS Individual Grant (6G091801) at Korea Institute for Advanced Study. $^{\ddagger,\ast}$The second and third author were supported by NRF of Korea grants 2021R1A2C1009639. $^{\ast\ast}$The fourth author was supported by a KIAS Individual Grant (SP089701) via the Center for Mathematical Challenges at Korea Institute for Advanced Study, supported by Basic Science Research Program through the National Research Foundation of Korea (NRF) funded by the Ministry of Education(2022R1I1A1A0105346711), and this research was supported by Changwon National University in 2023-2024.\\
\indent We are grateful to Zo\'e Chatzidakis for helpful discussions on the proof of Lemma \ref{lem:constant_product_of_rational_functions}.\\
\indent We also thank to the referee for very insightful remarks and suggestions.}

\begin{abstract}
We prove preservation theorems for NATP; many of them extend the previously established preservation results for other model-theoretic tree properties.
Using them, we also furnish proper examples of NATP theories which are simultaneously TP$_2$ and SOP.

First, we show that NATP is preserved by the parametrization and sum of the theories of Fra\"{i}ss\'{e} limits of Fra\"{i}ss\'{e} classes satisfying strong amalgamation property.

Second, the preservation of NATP for two kinds of dense/co-dense expansions, that is, the theories of lovely pairs and of H-structures for geometric theories and dense/co-dense expansion on vector spaces are proved.

Next, we prove preservation of NATP for the generic predicate expansion and the pair of an algebraically closed field and its distinguished subfield; for the latter, not only NATP, but also preservation of NTP$_1$ and NTP$_2$ are considered.

Finally, we present some proper examples of NATP theories using the results proved in this paper, including the parametrization of DLO and the expansion of an algebraically closed field by adding a generic linear order. In particular, we show that the model companion of the theory of algebraically closed fields with circular orders (ACFO) is NATP.





\end{abstract}

\maketitle

\section{Introduction}\label{sec:intro}

One major research interest of contemporary model theory is the classification of first-order theories by the (non-)existence of combinatorial properties of the definable sets in their models.
Among such studies, model-theoretic tree properties such as Shelah's tree property(TP) and stronger properties including the tree property of the second kind(TP$_2$) and SOP$_1$ are proved to have interesting nature by recent studies.
In fact, rather than having a tree property itself, not having such one (for example, a theory not having TP$_2$ is called NTP$_2$) gives us nice properties of a theory and an extension of such results is a highly active subject of research at the present.

The notions of SOP$_1$ and SOP$_2$ are introduced in the paper \cite{DS04} of M. Džamonja and S. Shelah.
In \cite{AK21}, the first and second named authors investigated SOP$_1$ and SOP$_2$ based on \cite{DS04}, and in the process of their arguments, they have isolated the new tree property called {\em antichain tree property}, abbreviated as {\em ATP}.
A theory not having ATP is said to be an {\em NATP} theory.
By their work in \cite{AK21}, it is known that if a theory $T$ has ATP, then $T$ has TP$_2$ and SOP$_1$.
Thus we have the following diagram, which can also be found on \cite{AKL23}:

\medskip

\begin{center}
\begin{tikzcd}
 {\rm DLO}  \arrow{r} & {\rm NIP}  \arrow{r} & {\rm NTP}_2\arrow{r} & {\rm NATP} \\ 
& {\rm stable} \arrow{r} \arrow{u} & {\rm simple}\arrow{r} \arrow{u} & {\rm NSOP}_1 \arrow{u} \arrow{r}  &\; \cdots \\
\end{tikzcd}
\end{center}

After \cite{AK21}, working with the fourth named author, they observed many interesting features of NATP theories in \cite{AKL23}, in accordance with the equation NATP = NSOP$_1$ + NTP$_2$.
In particular, the following statements/items are proved there:
If $T$ has ATP, then there is a formula in a single free variable that witnesses ATP; a combinatorial criterion for theories to be ATP in terms of indiscernibility; the equivalence of $k$-ATP ($k \geq 2$) and ATP.
All of those nice properties of (N)ATP theories are extensively used throughout this paper.
Moreover, several algebraic examples of (N)ATP theories were developed in \cite{AKL23} and two well-known examples, Skolem Arithmetic and atomless Boolean algebra are shown to be ATP.

As it is known, preservation results of model-theoretic properties guarantee that even if we enrich the theory, often expanding its language with some conditions, the newly constructed theory will still have properties that we are interested in.
In this way, one can get more interesting and complicated theories still having some nice properties from the already established one.
For this paper, our interest is being NATP and the purpose of this paper is to give preservation results of NATP theories.
Furthermore, using the preservation results of this paper, we present several proper examples of NATP theories having TP$_2$ and SOP.

In Section \ref{sec:preliminaries}, we recall some basic notation, definitions, and facts concerning tree/array languages and tree/array-indexed set of parameters.
In particular, the definition of ($k$-)antichain tree property and important notions around it are given, strong/array indiscernibility and the modeling property of those are reminded, and a combinatorial criterion for a theory to be NATP is stated.
All of these facts play important roles in the arguments of this paper.

After Section \ref{sec:preliminaries}, we start to give the preservation results.
In \cite{CR16}, it is showed that a parametrization of a simple theory $T$ which is obtained as the theory of Fra\"{i}ss\'{e} limit of a Fra\"{i}ss\'{e} class satisfying strong amalgamation property(SAP) results an NSOP$_1$ theory.
Utilizing useful facts for parametrized structures showed in \cite{CR16}, we prove that NATP is preserved under taking such parametrization in Section \ref{sec:Fraisse}.
It will be also shown that NATP is preserved under the sum of Fra\"{i}ss\'{e} limits of Fra\"{i}ss\'{e} classes having SAP.

In Section \ref{subsec:lovely}, we show that if a complete theory $T$ is geometric, then being NATP is preserved in the theories of lovely pairs of models and of H-structures associated with $T$.
It is known that NTP$_2$ is preserved under such expansions in \cite{BK16}.
The framework of this section will be based on \cite{DK17}, which proved the preservation of NTP$_1$ (or equivalently, NSOP$_2$).
Also, in the following Section \ref{subsec:d/cod_vs}, we extend the results of \cite{BDV22} to NATP theories: In the very recent paper \cite{BDV22}, it is proved that stability, simplicity, NIP, NTP$_1$, NTP$_2$, and NSOP$_1$ are all preserved by a dense/co-dense expansion on vector spaces. We verify that the same construction also preserves NATP.

In \cite{CP98}, the problem of adding a generic unary predicate symbol was studied, where such an expansion is called the generic predicate construction.
We know that NTP$_2$ is preserved by the generic predicate construction by \cite{Che14} and NTP$_1$ by \cite{Dob18}, whose idea of the proof is similar as \cite{Che14}.
Utilizing ideas there, we show in Section \ref{sec:gen_pred} that NATP is preserved under such a construction.

The paper \cite{DKL22} gave preservation results on a pair of an algebraically closed field and its distinguished subfield; there, it is shown that the pair structure is stable, NIP, or NSOP$_1$ if the distinguished subfield has the corresponding one.
In Section \ref{sec:pair_acf}, motivated by \cite{DKL22}, we further investigate preservation of model-theoretic tree properties:
If the distinguished subfield is NTP$_1$, NTP$_2$, or NATP, then so is the pair structure. As a colrollary, we conclude hat a PAC field is NATP (and NTP$_1$ respectively) if and only if the Galois group of the field has the complete system of NATP (and NTP$_1$ respectively).

Section \ref{sec:examples} is dedicated to present proper examples of NATP theories with TP$_2$ and SOP, whose constructions are mostly given by the results of previous sections.
We also prove that the model companion of the theory of algebraically closed fields with circular orders invariant under multiplication (ACFO) is NATP, which exists and is of TP$_2$ and SOP (cf. [Tran19]).

\section{Preliminaries}\label{sec:preliminaries}

We collect definitions and facts from \cite{AK21} and \cite{AKL23} for later sections.
The notation is basically the same as \cite{AKL23}:

\begin{notation}\label{language of trees}
Let $\kappa$ and $\lambda$ be cardinals.

\begin{enumerate}
    \item[(i)] By $\kappa^\lambda$, we mean the set of all functions from $\lambda$ to $\kappa$. 
    \item[(ii)] By $\trec$, we mean $\bigcup_{\alpha<\lambda}{\kappa^\alpha}$ and call it a {\em tree}.
    If $\kappa = 2$, we call it a {\em binary tree}.
    If $\kappa \geq \omega$, then we call it an {\em infinite tree}.
    \item[(iii)] By $\emptyset$ or $\lr$, we mean the empty string in $\trec$, which means the empty set.
\end{enumerate}

Let $\eta,\nu\in {\trec}$. 

\begin{enumerate}
    \item[(iv)] By $\eta\tri\nu$, we mean $\eta \subseteq \nu$.
    If $\eta\tri\nu$ or $\nu\tri\eta$, then we say $\eta$ and $\nu$ are {\em comparable}.
    \item[(v)] By $\eta\perp\nu$, we mean that $\eta\not\trianglelefteq\nu$ and $\nu\not\trianglelefteq\eta$; in this case, we say $\eta$ and $\nu$ are {\em incomparable}.
    \item[(vi)] By $\eta\wedge\nu$, we mean the maximal $\xi\in{\trec}$ such that $\xi\tri\eta$ and $\xi\tri\nu$.
    \item[(vii)] By $l(\eta)$, we mean the domain of $\eta$.
    \item[(viii)] By $\eta\lex\nu$, we mean that either $\eta\tri\nu$, or $\eta\perp\nu$ and $\eta(l(\eta\wedge\nu))<\nu(l(\eta\wedge\nu))$. 
    \item[(ix)] By $\eta\coc\nu$, we mean $\eta\cup\{(l(\eta)+i,\nu(i)):i< l(\nu)\}$.
\end{enumerate}

Let $X\subseteq \trec$.

\begin{enumerate}
    \item[(x)] By $\eta\coc X$ and $X\coc\eta$, we mean $\{\eta\coc x:x\in X\}$ and $\{x\coc\eta:x\in X\}$ respectively.
\end{enumerate}

Let $\eta_0, \ldots, \eta_n\in\trec$.

\begin{enumerate}
    \item[(xi)] We say a subset $X$ of $\trec$ is an {\it antichain} if the elements of $X$ are pairwise incomparable, {\it i.e.}, $\eta\perp\nu$ for all $\eta,\nu\in X$).
\end{enumerate}

\end{notation}

Now we recall the notions of indiscernibility of tree/array and modeling property (cf. \cite{KK11}, \cite{KKS14} and \cite{TT12}).
Let $M$ be a structure in a language $\CL$. For a tuple $\bar a$ of elements in $M$ and a subset $A$ of $M$, by $\qftp_{\CL}(\bar a/A)$ and $\tp_{\CL}(\bar a/A)$, we mean the set of quantifier-free $\CL_{A}$-formulas and the set of $\CL_A$-formulas realized by $\bar a$ in $M$ respectively.
If there is no confusion, we may omit the subscript $\CL$.

Let $\mathcal I$ be a structure in a language $\CL_{\mathcal I}$. For a set $\{b_i:i\in \mathcal I\}$ and a finite tuple $\bar i=(i_0,\ldots,i_n)$ in $\mathcal I$, we write $\bar b_{\bar i}$ for the tuple $(b_{i_0},\ldots,b_{i_n})$. Let $\mathbb M$ be a monster model of a complete theory $T$ in a language $\CL$.
By $\overline{a}_{\overline{\eta}} \equiv_{\Delta, A} \overline{b}_{\overline{\nu}}$ (or $\tp_\Delta(\overline{a}_{\overline{\eta}}/A) = \tp_\Delta(\overline{b}_{\overline{\nu}}/A)$), we mean that for any $\mathcal{L}_A$-formula $\varphi(\overline{x}) \in \Delta$ where $\overline{x} = x_0 \cdots x_n$, $\overline{a}_{\overline{\eta}} \models \varphi(\overline{x})$ if and only if $\overline{b}_{\overline{\nu}} \models \varphi(\overline{x})$.

\begin{definition}\label{def:structure_indiscernibility_modeling}
Let $\mathbb M$ be a monster model in a language $\CL$ and $\mathcal I$ be an index structure in a language $\CL_{\mathcal I}$.
\begin{enumerate}
    \item For $(b_i)_{i\in \mathcal I}$ of tuples of elements in $\mathbb M$ and a subset $A$ of $\mathbb M$, we say that $(b_i)_{i\in \mathcal I}$ is {\em $\mathcal I$-indiscernible over $A$} if for any finite tuples $\bar i$ and $\bar j$ in $\mathcal I$,
    $$\qftp_{\CL_{\mathcal I}}(\bar i)=\qftp_{\CL_{\mathcal I}}(\bar j) \text{ implies } \tp_{\CL}(\bar b_{\bar i}/A)=\tp_{\CL}(\bar b_{\bar j}/A).$$

    \item We say $(b_i)_{i\in \mathcal I}$ is {\em $\mathcal I$-locally based} on $(a_i)_{i\in \mathcal I}$ over $A$ if for all $\bar i$ and a finite set of $\CL_A$-formulas $\Delta$, there is $\bar j$ such that $\qftp_{\CL_{\mathcal I}}(\bar i)=\qftp_{\CL_{\mathcal I}}(\bar j)$ and $\bar b_{\bar i}\equiv_{\Delta,A}\bar a_{\bar j}$.

\end{enumerate}

\end{definition}
We say that $\mathcal I$-indexed sets have the {\em modeling property} if for any $\mathcal I$-indexed set $(a_i)_{i\in \mathcal I}$, there is an $\mathcal I$-indiscernible set $(b_i)_{i\in \mathcal I}$, which is $\mathcal I$-locally based on $(a_i)_{i\in \mathcal I}$.

\begin{definition}\label{def:tree_array_indiscernibility}$ $
\begin{enumerate}
    \item For cardinals $\kappa>1$ and $\lambda$, let $\kappa^{<\lambda}$ be a tree in the language $\mathcal{L}_{0}=\{\tri,\lex,\wedge\}$ with interpretations in Notation \ref{language of trees}. We refer to a $\kappa^{<\lambda}$-indexed indiscernible set as a {\em strongly indiscernible tree}. We say that $(b_{\eta})_{\eta\in \kappa^{<\lambda}}$ is {\em strongly locally based} on $(a_{\eta})_{\eta\in \kappa^{<\lambda}}$ over $A$ if $(b_{\eta})_{\eta\in \kappa^{<\lambda}}$ is $\kappa^{<\lambda}$-locally based on $(a_{\eta})_{\eta\in \kappa^{<\lambda}}$ over $A$.

    \item Let $\omega\times \omega$ be an array in the language $\CL_{ar}=\{<_{len},<_2\}$ with the interpretation that $(i,j)<_{len}(s,t)$ if and only if $i<s$ and $(i,j)<_2(s,t)$ if and only if $(i=s)\wedge (j<t)$. We refer to an $\omega\times \omega$-indexed indiscernible tuple as an {\em indiscernible array}. We say that $(b_{i,j})_{(i,j)\in\omega\times \omega}$ is {\em array locally based} on $(a_{i,j})_{(i,j)\in\omega\times \omega}$ over $A$ if $(b_{i,j})_{(i,j)\in\omega\times \omega}$ is $\omega\times \omega$-locally based on $(a_{i,j})_{(i,j)\in\omega\times \omega}$ over $A$.
\end{enumerate}
    
\end{definition}

\begin{notation}
For a tree $\kappa^{<\lambda}$ in the language $\CL_0$, by $\overline{\eta}\sim_{0}\overline{\nu}$, we mean $\textrm{qftp}_{\CL_0}(\overline{\eta})=\textrm{qftp}_{\CL_0}(\overline{\nu})$ and say they are {\em strongly isomorphic}.    
\end{notation}

\begin{fact}\label{modeling property}$ $
\begin{enumerate}
    \item Let $(a_\eta)_{\eta \in \treo}$ be a tree-indexed set.
    Then there is a strongly indiscernible tree $(b_\eta)_{\eta \in \treo}$ which is strongly locally based on $(a_\eta)_{\eta \in \treo}$.
    \item Let $(a_{i,j})_{(i,j)\in\omega\times \omega}$ be an array-indexed set. Then there is an indiscernible array $(b_{i,j})_{(i,j)\in\omega\times \omega}$ which is array locally based on $(a_{i,j})_{(i,j)\in\omega\times \omega}$.
\end{enumerate}

\end{fact}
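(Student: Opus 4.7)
The plan is to derive both parts as instances of the general template: an index structure $\mathcal{I}$ has the modeling property whenever the class of its finite $\mathcal{L}_\mathcal{I}$-substructures is a Ramsey class. This reduction, together with the known Ramsey properties for the tree language $\mathcal{L}_0$ and the array language $\mathcal{L}_{ar}$, will give both statements uniformly.

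First I would reformulate the desired conclusion by compactness. Given $(a_\eta)_{\eta\in\omega^{<\omega}}$ (respectively $(a_{i,j})_{(i,j)\in\omega\times\omega}$), enlarge the language by new constant symbols $(c_\eta)$ (resp.\ $(c_{i,j})$) and consider the set $\Sigma$ of $\mathcal{L}_A$-sentences asserting (i) that the $c$-tuple realizes a complete type that is strongly locally based on the $a$-tuple over $A$, and (ii) full $\mathcal{I}$-indiscernibility: for every quantifier-free $\mathcal{L}_\mathcal{I}$-type $p(\bar x)$ and every $\bar\eta,\bar\nu$ of the same quantifier-free type, the corresponding $c$-tuples satisfy the same $\mathcal{L}_A$-formulas from a prescribed finite set $\Delta$. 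It suffices to realise each finite subset of $\Sigma$; thus, given finite $\Delta$ and a finite list $\bar\eta_1,\dots,\bar\eta_m$ of tuples from $\mathcal{I}$, I need to locate tuples $\bar b_{\bar\eta_k}$ realising the corresponding $\Delta$-type of some $\bar a_{\bar\nu_k}$ with $\bar\nu_k \sim \bar\eta_k$, and so that tuples of the same quantifier-free $\mathcal{L}_\mathcal{I}$-type have the same $\Delta$-type.

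Next I would discharge this finitary requirement using a Ramsey argument. For each quantifier-free $\mathcal{L}_\mathcal{I}$-isomorphism type $q$ of $n$-tuples, the $\Delta$-type map $\bar\nu \mapsto \tp_\Delta(\bar a_{\bar\nu}/A)$ colours the copies of $q$ inside $\mathcal{I}$ by finitely many colours. Applying the Ramsey property to the class of finite $\mathcal{L}_\mathcal{I}$-substructures of $\mathcal{I}$ and iterating over the finitely many isomorphism types needed for $\bar\eta_1,\dots,\bar\eta_m$, I obtain an embedded copy $\mathcal{I}' \subseteq \mathcal{I}$ on which all these colourings are constant. Reading off the tuples $\bar\nu_k$ inside $\mathcal{I}'$ corresponding to $\bar\eta_k$ then gives the required $\bar a_{\bar\nu_k}$'s, which, by compactness, assemble into the sought $(b_\eta)$ or $(b_{i,j})$.

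Finally, I would invoke the two inputs of the main obstacle: for (1) the Ramsey property for finite trees in the language $\{\tri, <_{lex}, \wedge\}$, which is essentially the content of Milliken's tree theorem and is already packaged by Kim--Kim--Scow \cite{KKS14}; for (2) the Ramsey property for finite $\mathcal{L}_{ar}$-structures on $\omega\times\omega$, which reduces to two nested applications of the classical Ramsey theorem (first on the ``row'' index and then on the ``column'' index within each row). The actual combinatorial step, i.e.\ Milliken's theorem, is the genuinely hard ingredient, but it is black-boxed from the literature; the rest of the argument is the routine compactness-plus-Ramsey extraction sketched above and can be quoted directly from \cite{KKS14} or \cite{TT12}.
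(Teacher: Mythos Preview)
Your sketch is correct and is precisely the standard compactness-plus-structural-Ramsey extraction argument found in the cited references; the paper itself does not give a proof but simply defers to \cite{KK11}, \cite{KKS14}, and \cite{TT12}, so your proposal in fact supplies more detail than the paper does while following the same route.
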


\noindent The proof of the above fact can be found in \cite{KK11}, \cite{KKS14} and \cite{TT12}. We call Fact \ref{modeling property}(1) the {\em strong modeling property} and Fact \ref{modeling property}(2) the {\em array modeling property}.


\begin{definition}\label{def:antichain_tree_property}
Let $T$ be a first-order complete $\mathcal{L}$-theory.
We say a formula $\varphi(x,y) \in \mathcal{L}$ has (or is) {\it $k$-antichain tree property} ($k$-ATP) if for any monster model $\mathbb M$, there is a tree-indexed set of parameters $(a_\eta)_{\eta\in 2^{<\omega}}$ such that
\begin{enumerate}
    \item[(i)] for any antichain $X$ in $2^{<\omega}$, the set $\{\varphi(x,a_\eta):\eta\in X\}$ is consistent;
    \item[(ii)] for any pairwise comparable distinct elements $\eta_0, \ldots, \eta_{k-1} \in 2^{<\omega}$, $\{\varphi(x; a_{\eta_i}): i < k\}$ is inconsistent.
\end{enumerate}
We say $T$ has $k$-ATP if there is a formula $\varphi(x, y)$ having $k$-ATP and
\begin{itemize}
    \item If $k = 2$, we omit $k$ and simply write ATP.
    \item If $T$ does not have ATP, then we say $T$ has (or is) NATP.
    \item If $T$ is not complete, then saying `$T$ is NATP' means that any completion of $T$ is NATP.
\end{itemize}

\end{definition}

\begin{rem/def}\label{rem/def:univ_antichain}$ $

\begin{enumerate}
    \item We say an antichain $X \subseteq \kappa^{< \lambda}$ is {\em universal} if for each finite antichain $Y \subseteq \kappa^{< \lambda}$, there is $X_0 \subseteq X$ such that $Y \sim_{0} X_0$.
    A typical example of a universal antichain is $\kappa^{\lambda'} \subseteq \kappa^{< \lambda}$ where $\kappa > 1$ and $\omega \leq \lambda' < \lambda$.
    \item Let $\varphi(x; y)$ be a formula and $(a_\eta)_{\eta \in \kappa^{< \lambda}}$ be a tree-indexed set of parameters where $\kappa > 1$ and $\lambda$ is infinite.
    We say $(\varphi(x; y),(a_\eta)_{\eta \in \kappa^{< \lambda}}))$ witnesses ATP if for any $X \subseteq \kappa^{< \lambda}$, the partial type $\{\varphi(x,a_\eta)\}_{\eta \in X}$ is consistent if and only if $X$ is pairwise incomparable.
    Note that $T$ has ATP if and only if it has a witness for some $\kappa > 1$ and infinite $\lambda$ by compactness.
\end{enumerate}

\end{rem/def}

The following remark will be freely used to manipulate the witness of ATP.
All the definitions stated so far naturally extends to the `tree' $\kappa^{\leq \lambda}$ instead of $\kappa^{< \lambda}$.

\begin{Remark}\label{rem:witness_of_ATP}
By \cite[Corollary 4.9]{AK21} and \cite[Remark 3.6]{AKL23}, if $\varphi(x; y)$ has ATP, then there is a witness $(\varphi(x; y),(a_\eta)_{\eta \in 2^{\leq \omega}})$ with strongly indiscernible $(a_{\eta})_{\eta \in 2^{\leq \omega}}$.
\end{Remark}

Finally, we recall a combinatorial fact and a criterion for a given theory to be NATP.

\begin{fact}\label{fact:monotone_universal_antichain}\cite[Corollary 3.23(b)]{AKL23}
Let $\kappa$ and $\lambda$ be infinite cardinals with $\lambda < cf(\kappa)$, $f: 2^{\kappa} \rightarrow X$ be an arbitrary function and $c: X \rightarrow \lambda$ be a coloring map.
Then there is a monochromatic universal antichain $S \subseteq 2^\kappa$.
\end{fact}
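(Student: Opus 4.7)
The plan is to construct a monochromatic ``perfect binary subtree'' inside $2^\kappa$ with respect to the coloring $g := c \circ f : 2^\kappa \to \lambda$. Specifically, I will build an injection $\rho : 2^{<\omega} \to 2^{<\kappa}$ preserving $\triangleleft$, $\wedge$, and branching direction (i.e., $\rho(\sigma \frown 0) <_{lex} \rho(\sigma \frown 1)$), together with branches $\eta_\sigma \in [\rho(\sigma)] := \{\eta \in 2^\kappa : \rho(\sigma) \triangleleft \eta\}$ all of one common color $j^* < \lambda$. Then $S := \{\eta_\sigma : \sigma \in 2^{<\omega}\}$ is monochromatic, and for each $k < \omega$ the lex-ordered tuple $(\eta_\sigma)_{\sigma \in 2^k}$ is strongly isomorphic to the lexicographic enumeration of $2^k$: any distinct $\sigma, \tau \in 2^k$ are incomparable of equal length, so $\eta_\sigma \wedge \eta_\tau = \rho(\sigma) \wedge \rho(\tau) = \rho(\sigma \wedge \tau)$, and the lex order matches $\sigma <_{lex} \tau$ by our branching convention.

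The technical heart is a Splitting Lemma: call $\sigma \in 2^{<\kappa}$ \emph{$j^*$-big} if $|[\sigma] \cap g^{-1}(j^*)| = 2^\kappa$; then every $j^*$-big $\sigma$ admits two incomparable $j^*$-big extensions. Choose $j^*$ so that the root $\langle\rangle$ is $j^*$-big, possible by pigeonhole since $|2^\kappa| > \lambda$ (from $\lambda < cf(\kappa) \leq \kappa$). For the lemma, suppose toward contradiction that the $j^*$-big extensions of some $j^*$-big $\sigma$ form a chain; this set is downward closed ($[\tau'] \subseteq [\tau]$ whenever $\tau \triangleleft \tau'$), hence consists of the initial segments of some $\eta^* \in 2^{\leq \kappa}$. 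Every $j^*$-colored branch through $\sigma$ either extends $\eta^*$ or diverges from $\eta^*$ at some level $\alpha < |\eta^*|$ into a non-$j^*$-big node (else the chain could be extended incomparably). If $|\eta^*| < \kappa$, both $\eta^* \frown 0$ and $\eta^* \frown 1$ are non-$j^*$-big (else the chain could be prolonged past $\eta^*$), so the extensions of $\eta^*$ contribute $< 2^\kappa$ many $j^*$-colored branches; if $|\eta^*| = \kappa$, at most one. Summing $\leq \kappa$ many cardinals each $< 2^\kappa$, König's theorem $cf(2^\kappa) > \kappa$ forces the total $< 2^\kappa$, contradicting $j^*$-bigness of $\sigma$.

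Finally, I would recursively define $\rho$ by $\rho(\langle\rangle) := \langle\rangle$ and, given $\rho(\sigma)$ $j^*$-big, applying the Splitting Lemma to obtain two incomparable $j^*$-big extensions of $\rho(\sigma)$, labeled so that $\rho(\sigma \frown 0) <_{lex} \rho(\sigma \frown 1)$. For each $\sigma \in 2^{<\omega}$ pick any $\eta_\sigma \in [\rho(\sigma)] \cap g^{-1}(j^*)$, nonempty by $j^*$-bigness. The main obstacle is the Splitting Lemma itself, whose cardinal arithmetic leans on König's theorem. Notably, no level-alignment of the $\rho(\sigma)$'s is required: strong isomorphism among branches of $2^\kappa$ is entirely determined by the combinatorial meet pattern and lex order, both of which $\rho$ preserves regardless of the actual lengths $|\rho(\sigma)|$.
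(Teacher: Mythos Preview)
The paper does not prove this statement; it is quoted as \cite[Corollary 3.23(b)]{AKL22} without argument, so there is nothing here to compare your approach against directly. Your perfect-tree construction via a ``bigness'' splitting lemma is a standard and sound route, and notably it only uses $\lambda < 2^\kappa$ together with K\"onig's inequality $\operatorname{cf}(2^\kappa)>\kappa$, not the full hypothesis $\lambda<\operatorname{cf}(\kappa)$.

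There is one small wrinkle. You assert that $\rho$ preserves $\wedge$, and later use $\eta_\sigma\wedge\eta_\tau=\rho(\sigma)\wedge\rho(\tau)=\rho(\sigma\wedge\tau)$. Your recursive construction, however, only produces two incomparable $j^*$-big extensions of $\rho(\sigma)$; their meet may properly extend $\rho(\sigma)$, so the second equality can fail. This does not break the argument: for distinct $\sigma,\tau\in 2^k$ one still has $\eta_\sigma\wedge\eta_\tau=\rho(\sigma)\wedge\rho(\tau)$, and the assignment $\nu\mapsto \rho(\nu^\frown 0)\wedge\rho(\nu^\frown 1)$ for $\nu\in 2^{<k}$, together with $\sigma\mapsto\eta_\sigma$ for $\sigma\in 2^k$, is easily checked to preserve $\tri$, $\perp$, $\lex$, and $\wedge$, which is exactly what strong isomorphism requires. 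Alternatively, after the Splitting Lemma hands you incomparable $j^*$-big $\tau_0,\tau_1$ above $\rho(\sigma)$, you may take $\rho(\sigma^\frown i)$ to be the immediate successor $(\tau_0\wedge\tau_1)^\frown i$, which is $j^*$-big since it lies below one of the $\tau_j$; this still does not make $\rho$ literally $\wedge$-preserving, but it makes the bookkeeping cleaner. Either way the conclusion stands.
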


\begin{fact}\label{fact:criterion_NATP}\cite[Theorem 3.27]{AKL23}
Let $T$ be a complete theory and $2^{|T|} < \kappa < \kappa'$  with $cf(\kappa) = \kappa$.
The following are equivalent.

\begin{enumerate}
	\item $T$ is NATP.
	\item For any strongly indiscernible tree $(a_{\eta})_{\eta \in 2^{< \kappa'}}$ and a single element $b$, there are $\rho \in 2^{\kappa}$ and $b'$ such that
	
	\begin{enumerate}
		\item $(a_{\rho^{\frown}0^i})_{i < \kappa'}$ is indiscernible over $b'$,
		\item $b \equiv_{a_{\rho}} b'$.
	\end{enumerate}
	
\end{enumerate}

\end{fact}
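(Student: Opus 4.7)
The plan is to establish each direction separately. The implication $(2) \Rightarrow (1)$ will be a short contrapositive argument using Remark \ref{rem:witness_of_ATP} and Remark/Definition \ref{rem/def:univ_antichain}, while $(1) \Rightarrow (2)$ is the substantive direction and will rely on the coloring machinery of Fact \ref{fact:monotone_universal_antichain}.

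For $(2) \Rightarrow (1)$, suppose $T$ has ATP; via Remark \ref{rem:witness_of_ATP} and compactness, fix a formula $\varphi(x;y)$ and a strongly indiscernible tree $(a_\eta)_{\eta \in 2^{<\kappa'}}$ jointly witnessing ATP. By Remark/Definition \ref{rem/def:univ_antichain}(1) the level set $2^\kappa \subseteq 2^{<\kappa'}$ is a universal antichain, so $\{\varphi(x,a_\eta):\eta \in 2^\kappa\}$ is consistent; let $b$ realize it. Applying (2) to this tree and element yields $\rho \in 2^\kappa$ and $b' \equiv_{a_\rho} b$ such that $(a_{\rho^\frown 0^i})_{i < \kappa'}$ is indiscernible over $b'$. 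Since $b \models \varphi(x,a_\rho)$ and $b' \equiv_{a_\rho} b$, we get $b' \models \varphi(x, a_{\rho^\frown 0^0})$; path indiscernibility over $b'$ then propagates this to $b' \models \varphi(x, a_{\rho^\frown 0^i})$ for every $i < \kappa'$. But $\rho^\frown 0^0$ and $\rho^\frown 0^1$ are distinct and comparable, contradicting clause (ii) of Definition \ref{def:antichain_tree_property}.

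For $(1) \Rightarrow (2)$, assume $T$ is NATP and suppose toward contradiction that there exist a strongly indiscernible tree $(a_\eta)_{\eta \in 2^{<\kappa'}}$ and an element $b$ for which no $\rho \in 2^\kappa$ admits any $b' \equiv_{a_\rho} b$ making the path $(a_{\rho^\frown 0^i})_{i<\kappa'}$ indiscernible over $b'$. Compactness upgrades this to a finite failure for each $\rho$: a formula $\psi_\rho(x,\bar y)$, a finite fragment $\Delta_\rho \subseteq \tp(b/a_\rho)$, and two same-order finite tuples $\bar i_\rho, \bar j_\rho$ in $\kappa'$ such that no realization of $\Delta_\rho$ can satisfy $\psi_\rho(\cdot, \bar a_{\rho^\frown 0^{\bar i_\rho}}) \leftrightarrow \psi_\rho(\cdot, \bar a_{\rho^\frown 0^{\bar j_\rho}})$. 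Color $\rho \in 2^\kappa$ by the strong-isomorphism class of this package; the color set has size at most $2^{|T|} < \kappa = cf(\kappa)$, so Fact \ref{fact:monotone_universal_antichain} produces a monochromatic $S \subseteq 2^\kappa$ containing, for every $k<\omega$, a tuple strongly isomorphic to the lex enumeration of $2^k$.

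The remaining task is to convert this uniform failure on $S$ into an ATP witness. For each $k$, pick a lex-copy $\{\rho_\xi\}_{\xi \in 2^k} \subseteq S$ of $2^k$, and set $\bar u_\xi := \bar a_{\rho_\xi^\frown 0^{\bar i}}$ and $\bar v_\xi := \bar a_{\rho_\xi^\frown 0^{\bar j}}$, where $(\psi, \bar i, \bar j, \Delta)$ is the common data from the monochromatic color. Strong tree indiscernibility combined with the color's recording of $\tp(b/a_\rho)$ guarantees that along antichains in the $\xi$-direction the instances $\{\psi(x, \bar u_\xi)\}$ are jointly realized by a single conjugate of $b$ obtained by amalgamating the local fragments $\Delta_{\rho_\xi}$, whereas along the $i$-chain direction the failure data $(\psi,\bar i,\bar j)$ force $\psi(x, \bar u_\xi) \wedge \psi(x, \bar v_\xi)$ inconsistent. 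Re-indexing these parameters by $2^{<\omega}$ yields a $k$-ATP witness for every $k$, hence by the equivalence of $k$-ATP and ATP recorded in the introduction (from \cite{AKL22}), an ATP witness for $T$, contradicting (1). The principal obstacle is this final combinatorial assembly: the color must be fine enough that the antichain consistencies can be amalgamated into one realization, yet coarse enough (fewer than $\kappa$ values) for Fact \ref{fact:monotone_universal_antichain} to apply; reconciling these two demands, and arranging the tree-indexed parameters so that consistencies fall along antichains while inconsistencies fall along chains, is the technical heart of the proof.
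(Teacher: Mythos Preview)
This statement is quoted as a Fact from \cite{AKL22} and is not proved in the present paper, so there is no in-paper proof to compare against directly. However, Remark \ref{rem:arbitrary_rho_in_S} describes the skeleton of the argument in \cite{AKL22}, and it differs from yours in the substantive direction.

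Your $(2)\Rightarrow(1)$ is fine.

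For $(1)\Rightarrow(2)$, the intended argument (as indicated by Remark \ref{rem:arbitrary_rho_in_S}) does not color by ``failure data'' but simply by $\eta\mapsto \tp(a_\eta/b)$. One takes a monochromatic universal antichain $S\subseteq 2^\kappa$, fixes any $\rho\in S$, sets $p(x,y):=\tp(b,a_\rho)$, and shows $q(x):=\bigcup_{i<\kappa'} p(x,a_{\rho^\frown 0^i})$ is consistent; then Ramsey and compactness give the required $b'$. If $q$ were inconsistent, compactness and path indiscernibility yield a single $\varphi(x,y)\in p$ and $m<\omega$ with $\{\varphi(x,a_{\rho^\frown 0^i}):i<m\}$ inconsistent, hence (by strong indiscernibility) $m$-inconsistent along every chain. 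On the other hand $b$ realizes $\{\varphi(x,a_\eta):\eta\in S\}$ because $a_\eta\equiv_b a_\rho$ for all $\eta\in S$; universality of $S$ plus strong indiscernibility then gives antichain consistency. Thus $\varphi$ witnesses $m$-ATP, contradicting NATP via the $k$-ATP/ATP equivalence.

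Your scheme instead colors by the compactness ``failure package'' $(\psi_\rho,\Delta_\rho,\bar i_\rho,\bar j_\rho)$ and then tries to assemble an ATP witness from the monochromatic set. There are genuine gaps here. First, compactness does not hand you a \emph{single} $\psi_\rho$ such that every realization of $\Delta_\rho$ violates that one biconditional: the finite inconsistent fragment may involve several biconditionals, and which one fails can depend on the realization. Second, and more seriously, your antichain-consistency step (``amalgamating the local fragments $\Delta_{\rho_\xi}$'') is asserted, not proved: the color records only a finite fragment of $\tp(b/a_\rho)$, not the full type, so there is no reason the fragments over an antichain should have a common realization. You yourself flag this assembly as the ``principal obstacle,'' and indeed it is not carried out. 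The approach via the coloring $\eta\mapsto\tp(a_\eta/b)$ sidesteps all of this: antichain consistency is witnessed directly by $b$, and the contradiction with NATP falls out in one line.
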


\begin{Remark}\label{rem:arbitrary_rho_in_S}
Let $\lambda = 2^{|T|} < \kappa < \kappa'$ with $cf(\kappa) = \kappa$ and $c: 2^{\kappa} \rightarrow \lambda$.
If $T$ is a complete NATP theory, by Fact \ref{fact:criterion_NATP}, for any strongly indiscernible tree $(a_{\eta})_{\eta \in 2^{< \kappa'}}$ and a single element $b$, there are $\rho \in 2^{\kappa}$ and $b'$ satisfying conditions $(a)$ and $(b)$ of Fact \ref{fact:criterion_NATP}.
On the other hand, by Fact \ref{fact:monotone_universal_antichain}, there is a universal antichain $S \subseteq 2^{\kappa}$ such that $|c(S)| = 1$.

Suppose that the length of each tuple $a_{\eta}$ is finite.
Then identifying $\lambda$ with $S_x(b) = $ (the set of all complete types over $b$ with $|x| = |a_{\eta}|$) and letting $c(\eta) = \tp(a_{\eta}/b)$ for each $\eta \in 2^{\kappa}$, we obtain $S \subseteq 2^{\kappa}$ such that for all $\eta, \eta' \in S$, $\tp(a_{\eta}/b) = \tp(a_{\eta'}/b)$.
In fact, the proof of \cite[Theorem 3.27]{AKL23} shows that for any $\rho$ in such $S$,  there always exists $b'$ satisfying $(a), (b)$ of Fact \ref{fact:criterion_NATP}.
\end{Remark}

Through the rest of the paper, given a sufficiently saturated model $\mathbb{M}$, every tuple or subset of $\mathbb M$ is small with respect to the degree of the saturation of $\mathbb M$, unless stated otherwise.

\section{Fra\"{i}ss\'{e} limits}\label{sec:Fraisse}

In this section, we aim to show that NATP is preserved under taking parametrization and sum of Fra\"{i}ss\'{e} limits.
For basic definitions and properties of Fra\"{i}ss\'{e} classes and limits, we may refer to \cite[Section 7.1]{Hod} or \cite[Section 4.4]{TZ}.
Throughout this section, we consider only {\bf finite and relational} languages, so that the theory of the Fra\"{i}ss\'{e} limit of a Fra\"{i}ss\'{e} class $\mathbb K$ has quantifier elimination.
First, we collect facts from \cite{CR16}, the construction of a parametrized theory $T_{pfc}$ from the Fra\"{i}ss\'{e} limit of a Fra\"{i}ss\'{e} class satisfying strong amalgamation property, and type-amalgamation results of $T_{pfc}$.
Then we use Fact \ref{fact:criterion_NATP} to conclude that $T_{pfc}$ is still NATP if the theory $T$ of the Fra\"{i}ss\'{e} limit is NATP.
In the latter part, we use Fact \ref{fact:criterion_NATP} again to prove that the theory of a sum of Fra\"{i}ss\'{e} limits $F_0$ and $F_1$ of such Fra\"{i}ss\'{e} classes is NATP if the theories of $F_0$ and $F_1$ are NATP.

\begin{notation}
Let $M \models T$ be a first order $\CL$-structure, $a \in M$ be a possibly infinite tuple and $A \subseteq M$.
Denote $\tp_{T}(a/A)$ the $\CL$-type of $a$ over $A$ (in $M$) and $\qftp_T(a/A)$ the quantifier free $\CL$-type of $a$ over $A$ to emphasize the underlying theory $T$ (and the language $\CL$).
If the underlying theory and language are obvious from the context, we may omit the subscript $T$.
\end{notation}

\begin{definition}\label{def:SAP}\cite[Defintion 6.3]{CR16}
Let $\mathbb K$ be a class of finite structures.
We say $\mathbb K$ has the {\em Strong Amalgamation Property} (or {\em SAP}) if given $A,B,C \in \mathbb K$ and embeddings $e:A \rightarrow B$ and $f:A\rightarrow C$, there are $D \in \mathbb K$ and embeddings $g: B \rightarrow D$, $h: C \rightarrow D$ such that:

\begin{enumerate}
    \item The following diagram commutes;
    
$$
\begin{tikzcd}
& B \arrow[dr, "g"] & \\
A \arrow[ur, "e"] \arrow[dr, "f"'] & & D\\
& C \arrow[ur, "h"'] &
\end{tikzcd}
$$

    \item $\operatorname{Im}(g) \cap \operatorname{Im}(h) = \operatorname{Im} (ge)$ (hence $= \operatorname{Im}(hf)$ as well).
\end{enumerate}

\end{definition}

\begin{remark}\label{rem:SAP_for_empty_intersection}
As in the proof of \cite[Lemma 6.3]{CR16}, \textbf{we allow $A = \emptyset$ for SAP}; it means that given $B, C \in \mathbb K$, there are $D \in \mathbb K$ and embeddings $g: B \rightarrow D$, $h: C \rightarrow D$ such that $g(B) \cap h(C) = \emptyset$.
Note that then SAP automatically implies Joint Embedding Property(JEP), which says the existence of $D$, $g$ and $h$ in the above line but does not require $g(B) \cap h(C) = \emptyset$.
\end{remark}

\begin{fact}\label{fact:equiv_SAP_no_algebraicity}\cite{Hod}
Let $\mathbb K$ be a Fra\"{i}ss\'{e} class and $M$ be the Fra\"{i}ss\'{e} limit of $\mathbb K$.
Then the following are equivalent:

\begin{enumerate}
	\item $\mathbb K$ has SAP.
	\item $M$ has no algebraicity, that is, for any finite subset $A$ of $M$, $\acl(A)=A$, which is preserved under elementary equivalence:
	For $T = \operatorname{Th}(M)$, a formula $\varphi(x, y)$ with $x = x_0 \cdots x_k$, $y = y_0 \cdots y_l$ and $n \ge 1$,
	$$T \models \forall y \left(\exists^{\le n} x \varphi(x, y) \rightarrow \big(\exists! x \varphi(x, y) \wedge (\forall x (\varphi(x, y) \rightarrow \bigwedge_{i \le k} \bigvee _{j \le l} x_i = y_j)\big)\right)$$
\end{enumerate}

Thus, if $\mathbb K$ has SAP, then for any $N \equiv M$ and any subset $B \subseteq N$, $\acl(B) = B$ (because $\acl(B) = \bigcup_{B_0 \subseteq B, |B_0| < \omega} \acl(B_0)$).
\end{fact}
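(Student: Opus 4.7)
The plan is to establish the two directions of the equivalence separately, making essential use of the fact that since $\mathbb K$ is a Fra\"{i}ss\'{e} class in a finite relational language, the theory $T = \operatorname{Th}(M)$ has quantifier elimination. Under QE, for a finite $A \subseteq M$ the algebraic closure $\acl(A)$ equals the set of elements whose quantifier-free type over $A$ has only finitely many realizations in $M$, so the algebraicity question reduces to counting realizations of quantifier-free types in members of $\mathbb K$.

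For $(1) \Rightarrow (2)$, I fix a finite $A \subseteq M$ and suppose toward contradiction that some $b \in M \setminus A$ is algebraic over $A$. The strategy is to use SAP to produce arbitrarily many distinct realizations of $\qftp(b/A)$ inside members of $\mathbb K$: amalgamate two copies of the substructure $\langle Ab\rangle$ over $\langle A\rangle$ via SAP so that the two copies of $b$ remain distinct, and iterate $n$ times to obtain a finite structure in $\mathbb K$ carrying $n$ distinct realizations of $\qftp(b/A)$. Each such structure embeds into $M$ fixing $A$ by universality/homogeneity of the Fra\"{i}ss\'{e} limit, yielding infinitely many realizations of $\qftp(b/A) = \tp(b/A)$ in $M$, contradicting algebraicity of $b$ over $A$.

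For $(2) \Rightarrow (1)$, given $A, B, C \in \mathbb K$ with embeddings $e : A \to B$ and $f : A \to C$, first embed $B$ into $M$, then extend the induced embedding of $A$ into $M$ to an embedding of $C$ into $M$ by a back-and-forth argument: enumerate $C \setminus f(A)$ and choose images one at a time so as to realize the correct quantifier-free type while avoiding the (already-fixed, finite) image of $B \setminus e(A)$. At each step the no-algebraicity hypothesis in $M$ guarantees that the required quantifier-free type has infinitely many realizations, so a fresh choice outside any given finite set is always available. The substructure $D \subseteq M$ generated by the union of the two images is then a finite substructure of $M$, hence lies in $\mathbb K = \operatorname{Age}(M)$, and witnesses SAP. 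The case $A = \emptyset$ in Remark \ref{rem:SAP_for_empty_intersection} is handled identically, reducing to JEP with a disjointness condition.

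The elementary equivalence assertion then follows formally once ``no algebraicity'' is recognized as a first-order scheme: the displayed sentence is first-order in $\varphi(x, y)$ for each fixed $n$, so the conjunction of these sentences over all $\varphi$ and $n$ belongs to $T = \operatorname{Th}(M)$ and therefore holds in every $N \equiv M$, forcing $\acl(B_0) = B_0$ for every finite $B_0 \subseteq N$ and then $\acl(B) = B$ for arbitrary $B$ by the standard finite-character argument. The main obstacle I anticipate is the bookkeeping in the back-and-forth for $(2) \Rightarrow (1)$, namely confirming at each extension stage that the no-algebraicity hypothesis really does provide enough realizations of the relevant quantifier-free type to avoid the previously placed finite image of $B$; this is routine but easy to mis-state if one forgets to use QE to pass between quantifier-free types in $\mathbb K$ and complete types in $M$.
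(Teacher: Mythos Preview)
The paper does not prove this statement at all: it is recorded as a \textbf{Fact} with a bare citation to \cite{Hod}, and the authors simply use it. Your proposal supplies a full proof where the paper gives none, and the argument you outline is correct and is essentially the standard one (the one found in Hodges): use SAP to manufacture arbitrarily many realizations of $\qftp(b/A)$ for $(1)\Rightarrow(2)$, and for $(2)\Rightarrow(1)$ embed $B$ and $C$ into $M$ one element at a time, using no-algebraicity (via QE) to guarantee infinitely many realizations of each successive quantifier-free type so that fresh choices avoiding the finite image of $B\setminus e(A)$ are always available. Your handling of the first-order scheme and the passage to arbitrary $N\equiv M$ and arbitrary $B\subseteq N$ is also correct. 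There is nothing to compare on the paper's side.
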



\subsection{Parametrization of a Fra\"{i}ss\'{e} limit}

For the first part of this section, we show that NATP is preserved under taking parametrization.
We begin by recalling the parametrization of a given Fra\"{i}ss\'{e} limit from \cite{CR16}.
Let $\mathbb K$ be a Fra\"{i}ss\'{e} class in a language $\CL = \{R^i: i < k\}$ where $k < \omega$ and each relation symbol $R^i$ has arity $n_i < \omega$.
We will define a new language $\CL_{pfc}$ by a parametrization of (relational) symbols in $\CL$, given as follows:

\begin{itemize}
	\item There are two sorts $P$ and $O$.
	\item For each $i < k$, there is an $(n_i + 1)$-ary relation symbol $R_x^i$ where $x$ is a variable of sort $P$ and the suppressed $n_i$ variables belong to the sort $O$.
\end{itemize}

Given an $\CL_{pfc}$-structure $M$, we write $M = (A,B)$ where $O(M) = A$ and $P(M) = B$, and we will refer to elements named by $O$ as {\em objects} and elements named by $P$ as {\em parameters}.
Given $b \in B$, we define the {\em $\CL$-structure associated with $b$ in $M$}, denoted by $A_b$, to be the $\CL$-structure interpreted in $M$ with domain $A$ and each relation symbol $R^i$ interpreted by $R^i_b(A)$.
If $b \in B$ and $C \subseteq A$, write $\langle C \rangle_b$ to denote the $\CL$-substructure of $A_b$ generated by $C$.
Define $\mathbb K_{pfc}$ as follows:
$$\mathbb K_{pfc} := \{M = (A,B): |M| < \aleph_0, (\forall b \in B)(A_b \in \mathbb K)\}.$$

\begin{fact}\label{fact:parametrization_fraisse_SAP}\cite[Lemma 6.3]{CR16}
If a Fra\"{i}ss\'{e} class $\mathbb K$ has SAP, then $\mathbb K_{pfc}$ is a Fra\"{i}ss\'{e} class having SAP.
\end{fact}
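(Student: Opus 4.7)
The plan is to verify directly the two assertions: first, that $\mathbb{K}_{pfc}$ is a Fra\"iss\'e class, and second, that it enjoys SAP. The countability up to isomorphism and the existence of arbitrarily large members of $\mathbb{K}_{pfc}$ are essentially automatic, since $\CL_{pfc}$ is finite and relational and one can build two-sorted structures of any prescribed finite size by stacking many copies of a given member of $\mathbb{K}$ indexed by a parameter sort. The hereditary property is also immediate: given $M=(A,B)\in \mathbb{K}_{pfc}$ and a substructure $M'=(A',B')$, for every $b\in B'$ the associated structure $A'_b$ is a substructure of $A_b\in\mathbb{K}$ and hence lies in $\mathbb{K}$ by HP of $\mathbb{K}$.

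The main work is to establish SAP for $\mathbb{K}_{pfc}$. Given $M_0=(A_0,B_0)\in\mathbb{K}_{pfc}$ and embeddings into $M_1=(A_1,B_1)$ and $M_2=(A_2,B_2)$, I would identify along $M_0$ so that $A_1\cap A_2=A_0$ and $B_1\cap B_2=B_0$ inside a common set, set $A_3:=A_1\cup A_2$, $B_3:=B_1\cup B_2$, and then define, for each $b\in B_3$, an $\CL$-structure on $A_3$ as follows. For $b\in B_0$, amalgamate the two given $\mathbb{K}$-structures $A_{1,b}$ and $A_{2,b}$ over $A_{0,b}$ using SAP of $\mathbb{K}$; since $\CL$ is relational, the amalgam can be taken to have universe exactly the union of the images, i.e. $A_3$, by restricting via HP. For $b\in B_1\setminus B_0$, we only have an $\CL$-structure on $A_1$; the idea is to first extend $A_{0,b}$ to an $\CL$-structure on $A_2$ that lies in $\mathbb{K}$, and then amalgamate this with $A_{1,b}$ over $A_{0,b}$. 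The case $b\in B_2\setminus B_0$ is symmetric. Strong amalgamation of $\mathbb{K}_{pfc}$ then reads off at once from $A_1\cap A_2=A_0$ and $B_1\cap B_2=B_0$.

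The step I expect to be the main obstacle is the construction of the auxiliary $\mathbb{K}$-structure on $A_2$ extending $A_{0,b}$ for a parameter $b$ that appears only on one side. This is exactly where the empty-intersection version of SAP from Remark \ref{rem:SAP_for_empty_intersection} is needed: iteratively adjoin the elements of $A_2\setminus A_0$ one at a time by amalgamating the current $\mathbb{K}$-structure with a single-element member of $\mathbb{K}$ over $\emptyset$, again invoking HP in the relational setting to pin the universe of each intermediate amalgam to the prescribed set. Once this auxiliary structure is in hand, the outer amalgamation step is a direct application of SAP of $\mathbb{K}$, and the rest of the argument is bookkeeping.

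A minor sanity check I would include at the end is that, after these choices are made, each $A_{3,b}$ truly lies in $\mathbb{K}$ (which is how we chose it) and that the inclusions $M_1,M_2\hookrightarrow M_3$ are $\CL_{pfc}$-embeddings, since the restrictions of each $R^i_b$ to $A_1$ and $A_2$ recover $R^i_b$ of $M_1$ respectively $M_2$ by construction. This confirms that $M_3\in \mathbb{K}_{pfc}$ witnesses the strong amalgam of $M_1$ and $M_2$ over $M_0$, completing the verification of SAP.
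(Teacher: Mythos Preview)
The paper does not supply its own proof of this fact; it is quoted as \cite[Lemma 6.3]{CR16}. Your direct verification is correct and is the natural argument. In particular, your handling of the one-sided parameters $b\in B_1\setminus B_0$ by first manufacturing a $\mathbb{K}$-structure on $A_2$ via repeated SAP over the empty base is exactly the step the paper flags in Remark~\ref{rem:SAP_for_empty_intersection} as being used in the cited proof, so your approach matches what \cite{CR16} does.

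One small simplification you might note: for $b\in B_1\setminus B_0$ you do not really need the intermediate structure on $A_2$; you can instead enlarge $A_{1,b}$ directly to a $\mathbb{K}$-structure on $A_3$ by adjoining the points of $A_2\setminus A_0$ one at a time via SAP over $\emptyset$, and then there is no further amalgamation to perform. This is only a cosmetic streamlining of your argument.
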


\textbf{Until Theorem \ref{thm:parametrization_preserve_NATP}, we fix a Fra\"{i}ss\'{e} class $\mathbb K$ having SAP} (so that $\mathbb K_{pfc}$ also has SAP by Fact \ref{fact:parametrization_fraisse_SAP}).
Denote $T$ by the $\CL$-theory of the Fra\"{i}ss\'{e} limit of $\mathbb K$ and $T_{pfc}$ by the $\CL_{pfc}$-theory of the Fra\"{i}ss\'{e} limit of $\mathbb K_{pfc}$.

\begin{fact}\label{fact:parameterization_model}$ $
\begin{enumerate}
    \item $T_{pfc}$ eliminates quantifiers and is $\aleph_0$-categorical.
    \item \cite[Lemma 6.4]{CR16} If $(A,B) \models T_{pfc}$, then for each $b \in B$, $A_b \models T$.
\end{enumerate}
\end{fact}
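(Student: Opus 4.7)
The plan is to handle (1) via standard Fra\"{i}ss\'{e} theory and then reduce (2) to the axioms (age plus extension) that $T_{pfc}$ necessarily satisfies. For (1), I would invoke Fact \ref{fact:parametrization_fraisse_SAP} to see that $\mathbb{K}_{pfc}$ is a Fra\"{i}ss\'{e} class in a (two-sorted) finite relational language; its Fra\"{i}ss\'{e} limit is then countable and ultrahomogeneous with age $\mathbb{K}_{pfc}$. Ultrahomogeneity in a relational signature is the standard route to quantifier elimination for $T_{pfc}$, and $\aleph_0$-categoricity follows from Ryll-Nardzewski combined with the observation that for each $n$ the number of isomorphism types in $\mathbb{K}_{pfc}$ of cardinality $\leq n$ is finite (since $\CL_{pfc}$ is finite relational).

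For (2), the plan is to verify directly that $A_b$ satisfies the Fra\"{i}ss\'{e} axiomatization of $T$, which splits into two schemes: \textbf{(a)} every finite $\CL$-substructure of $A_b$ lies in $\mathbb{K}$, and \textbf{(b)} the extension property, that for every pair $C \subseteq D$ in $\mathbb{K}$ and every $\CL$-embedding $f \colon C \hookrightarrow A_b$, $f$ extends to an $\CL$-embedding $\tilde{f} \colon D \hookrightarrow A_b$. Item (a) is immediate: for any finite $A_0 \subseteq A$, the pair $(A_0, \{b\})$ is a finite $\CL_{pfc}$-substructure of $(A,B)$, hence lies in $\mathbb{K}_{pfc}$ (this is precisely the age axiom scheme that $T_{pfc}$ enforces), which unfolds to $(A_0)_b \in \mathbb{K}$.

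For (b), I would view the embedding $f \colon C \hookrightarrow A_b$ as the data of an $\CL_{pfc}$-embedding $(C,\{b\}) \hookrightarrow (A,B)$ whose restriction to the parameter sort is $b \mapsto b$. Form $(D, \{b\}) \in \mathbb{K}_{pfc}$ (it belongs to $\mathbb{K}_{pfc}$ since $D \in \mathbb{K}$); this gives an extension $(C,\{b\}) \subseteq (D,\{b\})$ inside $\mathbb{K}_{pfc}$. The extension axioms that hold in $(A,B) \models T_{pfc}$ then yield an $\CL_{pfc}$-embedding $\tilde{f}_{pfc} \colon (D,\{b\}) \hookrightarrow (A,B)$ extending the original. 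The object-sort component of $\tilde{f}_{pfc}$ is the desired $\tilde{f} \colon D \hookrightarrow A_b$, because $\tilde{f}_{pfc}$ sends $b$ to $b$ on the parameter sort (it extends $f$, which already did so) and preserves each $R^i_b$. The only subtlety worth flagging is exactly this bookkeeping about the parameter sort being pinned to $b$; beyond that, I do not expect any obstacle, as the whole argument is a transfer of the age-and-extension axiomatization from $T_{pfc}$ to the fibre theory $T$.
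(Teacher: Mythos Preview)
Your proposal is correct. Note, however, that the paper does not actually prove this statement: it is recorded as a \emph{Fact}, with part (1) left implicit from the standing assumption that Fra\"{i}ss\'{e} limits in finite relational languages eliminate quantifiers and are $\aleph_0$-categorical, and part (2) simply cited from \cite[Lemma 6.4]{CR16}. So there is no in-paper proof to compare against.

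That said, your argument is exactly the standard one and is essentially what the cited reference does: reduce the fibre $A_b$ to the age-and-extension axiomatization of $T$, and pull both schemes back from the corresponding axioms for $T_{pfc}$ by packaging a finite $\CL$-configuration over $b$ as a finite $\CL_{pfc}$-configuration with parameter sort $\{b\}$. Your bookkeeping remark about the parameter being pinned to $b$ is the only point requiring care, and you handle it correctly. One small clarification worth making explicit: the extension property you invoke for $(A,B)$ holds not just in the countable Fra\"{i}ss\'{e} limit but in every model of $T_{pfc}$, because in a finite relational language each instance of it is expressible by a single sentence and hence belongs to $T_{pfc}$; you allude to this (``extension axioms that hold in $(A,B)\models T_{pfc}$'') but it is the one step where a reader might pause.
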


We will utilize the type-amalgamation results on $T_{pfc}$ in \cite{CR16}.
Let $\mathbb M = (\mathbb A, \mathbb B)$ be a monster model of $T_{pfc}$.
Given a formula $\varphi \in \CL$ and a parameter $p \in \mathbb B$, define $\varphi_p \in \CL_{pfc}(p)$ as the formula obtained by replacing each occurrence of $R^i$ by $R^i_p$, giving the objects their natural interpretations in $\mathbb A_p$.
For $C \subseteq \mathbb A$ and an $\CL$-type $q$ over $C$, we define $q_p$ by
$$q_p := \{\varphi_p: \varphi \in q\}.$$

\begin{remark}\label{rem:L-type_uniquely_extends_to_L_pfc}$ $

\begin{enumerate}
    \item Note that if $q$ is an $\mathcal{L}$-complete type over $C$, then $q_p$ has a unique extension to the $\mathcal{L}_{pfc}$-complete type over $Cp$ (hence we shall not distinguish $q_p$ and its extension).
    \item In the sense of (1), for any $A \subseteq \mathbb A$ and $B \subseteq \mathbb B$, a complete $\mathcal{L}_{pfc}$-type $\tp(c/AB)$ of an object element $c$ can be expressed as $\bigcup_{b \in B} q_b$ where $q_b$ is a complete $\mathcal{L}$-type over $A$ with interpretations of $R^i$'s associated with $b$.
\end{enumerate}

\end{remark}

\begin{fact}\label{fact:type-amalgam_in_parametrization}\cite[Lemmas 6.5 and 6.6]{CR16}

\begin{enumerate}
	\item Let $\{p_i: i < \alpha\} \subseteq \mathbb B$ be a collection of distinct parameters and $(q^i: i < \alpha)$ be a sequence of non-algebraic complete $\CL$-types over $C \subseteq \mathbb A$ (possibly with repetitions), where each $q^i$ is considered as a type in $\mathbb A_{p_i}$.
	Then the $\CL_{pfc}$-type $\bigcup_{i < \alpha}q_{p_i}^i$ is consistent.
	\item Let $A, B, C \subseteq \mathbb A$ be sets of objects, $F \subseteq \mathbb B$ be a set of parameters.
	If $A \cap B \subseteq C$ and $b_0, b_1 \in \mathbb B$ satisfies $b_0 \equiv_{CF} b_1$, then there is some $b \in \mathbb B$ such that $b \equiv_{ACF} b_0$ and $b \equiv_{BCF} b_1$ in $T_{pfc}$.
\end{enumerate}

\end{fact}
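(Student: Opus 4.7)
The plan for both parts is to reduce, via quantifier elimination of $T_{pfc}$ (Fact~\ref{fact:parameterization_model}(1)) together with saturation and ultrahomogeneity of $\mathbb{M}$, to a finite amalgamation problem in $\mathbb{K}_{pfc}$, and then to solve that problem using SAP of $\mathbb{K}$ (and hence of $\mathbb{K}_{pfc}$, by Fact~\ref{fact:parametrization_fraisse_SAP}).

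For (1), by compactness it suffices to fix finitely many distinct parameters $p_{i_0},\ldots,p_{i_{n-1}}$, a finite $C_0\subseteq C$, and formulas $\varphi_k(x)\in q^{i_k}$, and to produce some $c\in\mathbb{A}$ satisfying each $\varphi_k(x)_{p_{i_k}}$. I would construct a finite $\CL_{pfc}$-structure $N=(C_0\cup\{c\},\{p_{i_0},\ldots,p_{i_{n-1}}\})$ extending the substructure $(C_0,\{p_{i_0},\ldots,p_{i_{n-1}}\})\subseteq\mathbb{M}$: for each $k$, the interpretation of the relations indexed by $p_{i_k}$ on tuples involving $c$ is chosen so that $c$ realizes the relevant finite fragment of $q^{i_k}$ in $N_{p_{i_k}}$. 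Since the $p_{i_k}$ are distinct, these local choices are independent, and $N$ lies in $\mathbb{K}_{pfc}$ provided each $N_{p_{i_k}}\in\mathbb{K}$; here is where non-algebraicity enters, combined with $\acl(C_0)=C_0$ from Fact~\ref{fact:equiv_SAP_no_algebraicity} it guarantees that $\varphi_k$ is realized in $\mathbb{A}_{p_{i_k}}$ by an element outside $C_0$, so the desired $\CL$-expansion of $C_0\cup\{c\}$ exists in $\mathbb{K}$. Ultrahomogeneity of $\mathbb{M}$ then embeds $N$ over $C_0\cup\{p_{i_0},\ldots,p_{i_{n-1}}\}$, yielding the required $c$.

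For (2), reduce again to finite data: pick finite $A_0\subseteq A$, $B_0\subseteq B$, $C_0\subseteq C$ (enlarging $C_0$ to contain $A_0\cap B_0$, permitted by $A\cap B\subseteq C$), and finite $F_0\subseteq F$; it then suffices to find $b\in\mathbb{B}$ with the correct quantifier-free $\CL_{pfc}$-type over $A_0\cup B_0\cup C_0\cup F_0$. Consider the $\CL$-structures $\mathbb{A}_{b_0}\!\upharpoonright\!(A_0\cup C_0)$ and $\mathbb{A}_{b_1}\!\upharpoonright\!(B_0\cup C_0)$, both in $\mathbb{K}$ by Fact~\ref{fact:parameterization_model}(2); they agree on $C_0$ by the hypothesis $b_0\equiv_{CF}b_1$. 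SAP of $\mathbb{K}$ supplies an amalgam $D\in\mathbb{K}$ on $A_0\cup B_0\cup C_0$ whose restrictions to the two wings recover the given structures. Designate $D$ as the $\CL$-structure associated with a fresh parameter symbol $b$, and form the finite $\CL_{pfc}$-structure extending $(A_0\cup B_0\cup C_0,F_0\cup\{b_0,b_1\})\subseteq\mathbb{M}$ by adding $b$ to the parameter sort. The result lies in $\mathbb{K}_{pfc}$; ultrahomogeneity of $\mathbb{M}$ realizes $b$ in $\mathbb{B}$ over $A_0B_0C_0F_0b_0b_1$, and compactness produces the global $b$.

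The main obstacle in both parts is controlling how elements of the base sit inside the ambient structure when building the finite amalgam. In (1), non-algebraicity of each $q^{i_k}$ is essential to prevent $c$ from being forced into $C_0$ inside some $\mathbb{A}_{p_{i_k}}$, which would then obstruct consistency with the other $p_{i_{k'}}$; in (2), \emph{strong} (rather than mere) amalgamation is essential to keep $A_0\cap B_0\subseteq C_0$ in the amalgam, so that the prescribed $R^i_b$-data on $A_0\cup C_0$ and $B_0\cup C_0$ glue consistently along $C_0$ without spurious collisions.
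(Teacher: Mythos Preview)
The paper does not give its own proof of this statement: it is stated as a Fact cited from \cite[Lemmas 6.5 and 6.6]{CR16}, so there is no in-paper argument to compare against directly. That said, the paper does prove the slight generalization in Remark~\ref{rem:refined_type-amalgmation_in_parametrization}, whose part~(2) explicitly ``follow[s] the proof scheme of \cite[Lemma 6.6]{CR16}'': reduce to finite data by compactness, amalgamate $\langle AC\rangle_{b_0}$ and $\langle BC\rangle_{b_1}$ over $C$ via SAP of $\mathbb{K}$ to obtain $D\in\mathbb{K}$ with underlying set $ABC$, then introduce a fresh parameter $b_*$ with $E_{b_*}\cong D$ and embed into $\mathbb{M}$. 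Your proposal for~(2) is exactly this argument, and your proposal for~(1) is the expected construction from \cite{CR16} (independently specify the $\CL$-structure at each $p_{i_k}$ using a realization of $q^{i_k}$ outside $C_0$, then embed by ultrahomogeneity). Both parts are correct and match the approach the paper attributes to \cite{CR16}.
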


We generalize Fact \ref{fact:type-amalgam_in_parametrization} a bit for our purpose.

\begin{remark}\label{rem:refined_type-amalgmation_in_parametrization}$ $

\begin{enumerate}
    \item Let $\{p_i: i < \alpha\} \subseteq \mathbb B$ be a collection of distinct parameters and $(q^i: i < \alpha)$ a sequence of non-algebraic partial $\CL$-types over $C_i \subseteq \mathbb A$ where each $q^i$ is considered as a type in $\mathbb A_{p_i}$.
    Then the $\CL_{pfc}$-type $\bigcup_{i < \alpha} q_{p_i}^i$ is consistent.
    \item Let $A, B, C \subseteq \mathbb A$ be sets of objects and $F \subseteq \mathbb B$ a set of parameters.
    If $A \cap B \subseteq C$ and $b_0, b_1 \in \mathbb B$ satisfies $b_0 \equiv_C b_1$ and $b_0, b_1 \notin F$, then there is $b \in \mathbb B$ such that $b \equiv_{ACF} b_0$ and $b \equiv_{BCF} b_1$ in $T_{pfc}$.
\end{enumerate}

\end{remark}

\begin{proof}
(1) Put $C := \bigcup_{i < \alpha} C_i$.
Since each $q^i$ is non-algebraic, by compactness, there is a realization $a_i$ of $q^i$ outside $C (= \acl(C)$ due to Fact \ref{fact:equiv_SAP_no_algebraicity}).
Let $r^i := \tp_T(a_i/C)$, which contains $q^i$ for each $i < \alpha$.
Note that each $r^i$ is non-algebraic since $a_i \notin \acl(C)$.
Then applying Fact \ref{fact:type-amalgam_in_parametrization}(1) to the sequence $(r^i: i < \alpha)$, the $\CL_{pfc}$-type $\bigcup_{i < \alpha}r_{p_i}^i$ is consistent and so $\bigcup_{i < \alpha} q_{p_i}^i$ is consistent.\\

(2) We follow the proof scheme of \cite[Lemma 6.6]{CR16}.
By compactness, we may assume that $A, B, C$ and $F$ are finite, so that $AC = \langle AC \rangle_{b_0}$ and $BC = \langle BC\rangle_{b_1}$ belong to $\mathbb{K}$.
Following exactly the same argument of \cite[Lemma 6.6]{CR16}, there is $D \in \mathbb K$ with underlying set $ABC$, such that the following diagram

$$
\begin{tikzcd}
& \langle AC \rangle_{b_0} \arrow[dr, "f"] & \\
C \arrow[ur, "i"] \arrow[dr, "j"'] & & D\\
& \langle BC \rangle_{b_1} \arrow[ur, "g"'] &
\end{tikzcd}
$$

\noindent commutes where $C = \langle C \rangle_{b_0} = \langle C \rangle_{b_1}$, $i,j$ are inclusions and $f,g$ are embeddings that may be assumed to be inclusions.

Now let $b_*$ be a new parameter element outside $\{b_0, b_1\} \cup F$.
Define a structure $E$ with the object set $ABC$ and the parameter set $\{b_0, b_1, b_*\} \cup F$, with $E_{b_*} \cong D$ and $E_p = \langle ABC \rangle_p$ for each $p \in \{b_0, b_1\} \cup F$.
In the substructure of $E$ with only $AC$ as the set of objects, the existence of embedding(inclusion) $f$ implies that there is an automorphism fixing $ACF$ taking $b_*$ to $b_0$ since $b_0, b_* \notin F$.
This shows that $b_* \equiv_{ACF} b_0$, and similarly we have $b_* \equiv_{BCF} b_1$.
As $E \in \mathbb K_{pfc}$, we can find the desired $b \in \mathbb B$ via an embedding.
\end{proof}

Now we are ready to prove the first preservation theorem for NATP.

\begin{theorem}\label{thm:parametrization_preserve_NATP}
Let $T$ be the theory of the Fra\"{i}ss\'{e} limit of a Fra\"{i}ss\'{e} class $\mathbb K$ having SAP.
If $T$ has NATP, then $T_{pfc}$ also has NATP.
\end{theorem}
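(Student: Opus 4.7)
The plan is to verify the criterion of Fact \ref{fact:criterion_NATP} for $T_{pfc}$. Fix cardinals $2^{|T_{pfc}|} < \kappa < \kappa'$ with $cf(\kappa) = \kappa$, a strongly indiscernible tree $(a_\eta)_{\eta \in 2^{<\kappa'}}$ of $\mathcal{L}_{pfc}$-tuples in a monster $\mathbb{M} = (\mathbb{A}, \mathbb{B}) \models T_{pfc}$, and a single element $b \in \mathbb{M}$; the goal is to produce $\rho \in 2^\kappa$ and $b'$ such that $(a_{\rho^{\frown}0^i})_{i<\kappa'}$ is $\mathcal{L}_{pfc}$-indiscernible over $b'$ and $b \equiv_{a_\rho} b'$. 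Splitting $a_\eta = (c_\eta, p_\eta)$ into its object and parameter parts, I treat the cases $b \in O(\mathbb{M})$ and $b \in P(\mathbb{M})$ separately, in each case combining NATP of $T$ (applied inside the $\mathcal{L}$-structures $\mathbb{A}_p$, each of which models $T$ by Fact \ref{fact:parameterization_model}(2)) with the type-amalgamation provided by Fact \ref{fact:type-amalgam_in_parametrization} and Remark \ref{rem:refined_type-amalgmation_in_parametrization}.

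For the case that $b$ is an object, I first color $2^\kappa$ by $\eta \mapsto \tp_{T_{pfc}}(a_\eta/b)$ and invoke Fact \ref{fact:monotone_universal_antichain} to extract a monochromatic universal antichain $S \subseteq 2^\kappa$; picking any $\rho \in S$ reduces clause (b) of Fact \ref{fact:criterion_NATP} to finding some $b'$ that realizes $\tp_{T_{pfc}}(b/a_\rho)$ and makes the continuation indiscernible. Next, for each parameter $p$ occurring among the $p_{\rho^{\frown}0^i}$ ($i < \kappa'$), I apply Fact \ref{fact:criterion_NATP} for $T$ inside $\mathbb{A}_p$, using the strong modeling property (Fact \ref{modeling property}(1)) where needed to restore strong indiscernibility of the relevant $\mathcal{L}$-tree while keeping the $\mathcal{L}$-type of $b$ over it fixed, so as to obtain an $\mathcal{L}$-type $r^p$ over $(c_{\rho^{\frown}0^i})_{i<\kappa'}$ that extends $\tp_T(b/c_\rho)$ in $\mathbb{A}_p$ and along which $(c_{\rho^{\frown}0^i})_i$ is $\mathcal{L}$-indiscernible in $\mathbb{A}_p$. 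Remark \ref{rem:refined_type-amalgmation_in_parametrization}(1) then amalgamates the $r^p$'s into a consistent $\mathcal{L}_{pfc}$-type; any realization $b'$ of it delivers clauses (a) and (b) of the criterion via Remark \ref{rem:L-type_uniquely_extends_to_L_pfc}(2) together with the monochromaticity of $S$.

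The case where $b$ is a parameter is more direct: since SAP (Fact \ref{fact:equiv_SAP_no_algebraicity}) gives $\acl$-triviality in $T_{pfc}$, compactness together with Remark \ref{rem:refined_type-amalgmation_in_parametrization}(2) lets me move $b$ to a fresh parameter $b'$ lying outside $\bigcup_i p_{\rho^{\frown}0^i}$ with $b' \equiv_{a_\rho} b$, after which strong indiscernibility of the original tree passes directly to $\mathcal{L}_{pfc}$-indiscernibility of $(a_{\rho^{\frown}0^i})_i$ over $b'$. The main obstacle I anticipate is the coordination step in the object case: because the $\mathcal{L}$-structure $\mathbb{A}_p$ in which NATP of $T$ is invoked depends on $p$ and $p$ varies along the continuation, a single $\rho$ must simultaneously discharge the NATP criterion inside each relevant $\mathbb{A}_p$. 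Managing this by selecting a uniform $\rho$ via Fact \ref{fact:monotone_universal_antichain} and then amalgamating the local $r^p$'s into a single global $b'$ via Remark \ref{rem:refined_type-amalgmation_in_parametrization}(1) is where the bulk of the technical work lies.
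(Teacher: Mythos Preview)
Your parameter case has a genuine gap. You propose to move $b$ to a fresh parameter $b'$ with $b' \equiv_{a_\rho} b$ and then claim that strong indiscernibility of the original tree yields $\mathcal{L}_{pfc}$-indiscernibility of $(a_{\rho^{\frown}0^i})_i$ over $b'$. This fails: by quantifier elimination, the $\mathcal{L}_{pfc}$-type of a finite tuple from $(a_{\rho^{\frown}0^i})_i$ over $b'$ records the atomic formulas $R^k_{b'}(\bar o)$ for object tuples $\bar o$ drawn from the $c_{\rho^{\frown}0^i}$'s, and these depend on the $\mathcal{L}$-structure $\mathbb{A}_{b'}$, which is in no way controlled by the original tree merely because $b'$ is new. (Also note that if $b$ lies in the common parameter part $d$, then $b \in a_\rho$ forces $b' = b$, so ``moving'' is not even possible; and if $b \notin d$ you can already choose $\rho$ with $b$ fresh, so nothing is gained.) The paper handles this case differently: it sets $p(x,y) := \tp(b,a_\rho)$ and $q(x) := \bigcup_{i<\kappa'} p(x,a_{\rho^{\frown}0^i})$, shows $q$ is consistent by an induction using Remark~\ref{rem:refined_type-amalgmation_in_parametrization}(2), and then obtains indiscernibility over a realization $b'$ of $q$ via Ramsey and compactness.

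Your object case is workable in principle but substantially more intricate than the paper's argument, and the sketch leaves real difficulties open. Invoking the strong modeling property ``to restore strong indiscernibility'' inside a particular $\mathbb{A}_p$ changes the tree and hence destroys the uniform $\rho$ you already fixed; and although the $T_{pfc}$-coloring you use does refine each of the $\mathbb{A}_p$-colorings (so a single $\rho$ can in fact serve all $p$ in the common part simultaneously via Remark~\ref{rem:arbitrary_rho_in_S}), you never argue this. The paper avoids these complications entirely: it uses exactly the same $q(x)$ as in the parameter case, observes that for an object $b \notin a_{\rho^{\frown}0^i}$ each finite piece $q_n(x)$ decomposes as $\bigcup_{j \le n}\bigcup_{p \in dd_j} r^{j,p}_p$ with each $r^{j,p}$ non-algebraic (by SAP and Fact~\ref{fact:equiv_SAP_no_algebraicity}), and concludes consistency from Remark~\ref{rem:refined_type-amalgmation_in_parametrization}(1); indiscernibility again comes from Ramsey at the end, not from separately engineering it in each fibre $\mathbb{A}_p$.
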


\begin{proof}
We will use Fact \ref{fact:criterion_NATP}.
Let $\mathbb M = (\mathbb A,\mathbb B)$ be a monster model of $T_{pfc}$, $2^{|T_{pfc}|} = 2^{\omega} < \kappa < \kappa'$ with $cf(\kappa) = \kappa$, $(a_{\eta})_{\eta \in 2^{< \kappa'}}$ be a strongly indiscernible tree of tuples and $b$ be a single element.

For the arguments in this proof, each $a_{\eta}$ can be assumed to be a finite tuple by compactness.
Put $S(b)$ the set of complete types with a tuple of variables of length $|a_{\eta}|$ over $b$.
Consider a coloring $c: 2^{\kappa} \rightarrow S(b)$ such that $\eta \mapsto \tp(a_{\eta}/b)$.
By Fact \ref{fact:monotone_universal_antichain}, there is a universal antichain $S \subseteq 2^{\kappa}$, which is monochromatic with respect to the coloring $c$.
Consider arbitrary $\rho \in S$.
By strong indiscernibility, for any $\eta, \nu$, $\eta', \nu' \in 2^{< \kappa'}$, $a_{\eta} \cap a_{\nu} = a_{\eta'} \cap a_{\nu'}$.
Thus there are tuples $c$, $c_i$ of object elements and tuples $d$, $d_i$ of parameter elements for each $i < \kappa'$ such that
\begin{itemize}
	\item for each $i < \kappa'$, $a_{\rho^{\frown}0^i} = c c_i d d_i$;
	\item for each $i < i' < \kappa'$, $c_i \cap c_{i'} = \emptyset$ and $d_i \cap d_{i'} = \emptyset$.
\end{itemize}
Define $p(x, y) := \tp(b, a_{\rho})$ and $q(x) := \bigcup_{i < \kappa'} p(x, a_{\rho^{\frown}0^i})$.

\begin{Claim}
$q(x)$ is consistent.
\end{Claim}

\begin{proof}
If $b$ is contained in a tuple $cd$, then nothing to prove;
$b$ realizes $q$.
Otherwise, choose $\rho \in S$ such that $b \notin a_{\rho^{\frown}0^i}$ for each $i < \kappa'$.

\begin{case1}
$b$ is an object element.
\end{case1}
By Remark \ref{rem:L-type_uniquely_extends_to_L_pfc}, $q(x)$ can be thought as
$$\bigcup_{e \in P} \bigcup_{i < \kappa'} q|_{c c_i e}$$
where $e$ is a single parameter element in $P = \bigcup_{j < \kappa'}d d_j$.
Since $T$ has NATP, regarding $q|_{c c_i e}$ as an $\mathcal{L}$-type in $\mathbb{A}_e$ over $c c_i$ with interpretations of relation symbols associated with $e$, $\bigcup_{i < \kappa'} q|_{c c_i e}$ is consistent by Remark \ref{rem:arbitrary_rho_in_S}.
Note that for each $e \in P$, $\bigcup_{i < \kappa'} q|_{c c_i e}$ has a realization outside $\bigcup_{i < \kappa'}c c_i (= \acl(\bigcup_{i < \kappa'}c c_i)$ by Fact \ref{fact:equiv_SAP_no_algebraicity}), so each $\bigcup_{i < \kappa'} q|_{c c_i e}$ is non-algebraic.
Then by Remark \ref{rem:refined_type-amalgmation_in_parametrization}(1), $\bigcup_{e \in P} \bigcup_{i < \kappa'} q|_{c c_i e}$ is consistent.

\begin{case2}
$b$ is a parameter element.
\end{case2}
For each $n \in \omega$, define $q_n(x) := \bigcup_{j \le n} p(x, a_{\rho^{\frown}0^j})$.
We will show that each $q_n$ is consistent for each $n \in \omega$, using Remark \ref{rem:refined_type-amalgmation_in_parametrization}(2) inductively.
For $n = 0$, $q_0 = p(x,a_{\rho}) = \tp(b/a_{\rho})$, nothing to prove.
Assume that $q_n$ is consistent, say $b_0 \models q_n$ and put $b_1 \models p(x,a_{\rho^{\frown}0^{n + 1}})$.
Then we have $c c_0 \cdots c_n \cap c c_{n + 1} = c$, $b_0 \equiv_c b_1$ and $b_0, b_1 \notin d d_0 \cdots d_{n + 1}$, thus applying Remark \ref{rem:refined_type-amalgmation_in_parametrization}(2), there is $b \in \mathbb B$ such that $b \equiv_{a_{\rho} \cdots a_{\rho^{\frown}0^n}} b_0$ and $b \equiv_{a_{\rho^{\frown}0^{n + 1}}} b_1$.
Now $b$ realizes $q_{n + 1}$.\\ 

\noindent In any case, by compactness and the fact that $(a_{\rho^{\frown}0^i})_{i < \kappa'}$ is indiscernible, we conclude that $q(x)$ is consistent.
\end{proof}

Now by Ramsey, compactness and Claim, there is $b' \models q(x)$ (so that $b' \equiv_{a_{\rho}} b$) such that $(a_{\rho^{\frown}0^i})_{i < \kappa'}$ is indiscernible over $b'$.
\end{proof}


\subsection{Sum of Fra\"{i}ss\'{e} limits}

In this subsection, we show that NATP is preserved under the sum of of Fra\"{i}ss\'{e} limits of Fra\"{i}ss\'{e} classes having SAP.
Fix two disjoint finite relational languages $\CL_0 = \{R_{0, i}: i \in I\}$ and $\CL_1 = \{R_{1, j}: j \in J\}$. For $m = 0, 1$, let $\mathbb K_m$ be a Fra\"{i}ss\'{e} class of finite $\CL_m$-structures and $F_m$ be the Fra\"{i}ss\'{e} limit of $\mathbb K_m$.

\begin{definition}\label{def:sum_fraisse_class}
The {\em sum} of $\mathbb K_0$ and $\mathbb K_1$ is the class $\mathbb K_0 \oplus \mathbb K_1$ of finite $(\CL_0 \cup \CL_1)$-structures given as follows:
\begin{align*}
\mathbb K_0 \oplus \mathbb K_1 := \ & \{(A; \{R_{0, i}^A: i \in I\}, \{R_{1, j}^A: j \in J\}): \\
& \ (A; \{R_{0, i}^A: i \in I\}) \in \mathbb K_0,(A; \{R_{1, j}^A: j \in J\})\in \mathbb K_1\}.
\end{align*}
\end{definition}

Thanks to SAP, we can easily show that the sum of Fra\"{i}ss\'{e} classes is again a Fra\"{i}ss\'{e} class, as stated in the following remark.

\begin{remark}\label{rem:sufficient_sum_to_be_fraisse}
If $\mathbb K_0$ and  $\mathbb K_1$ have SAP, then the sum $\mathbb K_0\oplus \mathbb K_1$ forms a Fra\"{i}ss\'{e} class having SAP. Write $F_0\oplus F_1$ for the Fra\"{i}ss\'{e} limit of $\mathbb K_0\oplus \mathbb K_1$.
\end{remark}

\begin{proof}
We need to show that $\mathbb K_0 \oplus \mathbb K_1$ satisfies the heredity property (HP), joint embedding property (JEP), and SAP.
It is easy to check HP by the definition of $\mathbb K_0 \oplus \mathbb K_1$ and as pointed out in Remark \ref{rem:SAP_for_empty_intersection}, JEP follows from SAP.
Thus we show SAP in this proof.
Take

\begin{itemize}
    \item structures $A := (A; \{R_{0, i}^A: i \in I\}, \{R_{1, j}^A: j \in J\})$, $B := (B; \{R_{0, i}^B: i \in I\}, \{R_{1, j}^B: j \in J\})$ and $C := (C; \{R_{0, i}^C: i \in I\}, \{R_{1, j}^C: j \in J\})$ in $\mathbb K_0 \oplus \mathbb K_1$ and
    \item embeddings $e: A \rightarrow B$ and $f: A \rightarrow C$.
\end{itemize}

To enjoy SAP for $\mathbb K_0$ and $\mathbb K_1$, consider the reduced structures $A_0 := (A; \{R_{0, i}^A: i \in I\})$, $B_0 := (B; \{R_{0, i}^B: i \in I\})$, $C_0 := (C; \{R_{0, i}^C: i \in I\})$, and put $A_1$, $B_1$, $C_1$ similarly.
Note that $e: A_m \rightarrow B_m$ is an embedding for $m = 0, 1$ and similarly for $f$.
Then by SAP for $\mathbb K_m$, there are $D_m \in \mathbb K_m$ (possibly having the different universes) and embeddings $g_m: B_m \rightarrow D_m$, $h_m: C_m \rightarrow D_m$ such that

$$
\begin{tikzcd}
& B_m \arrow[dr, "g_m"] & \\
A_k \arrow[ur, "e"] \arrow[dr, "f"'] & & D_m\\
& C_m \arrow[ur, "h_m"'] &
\end{tikzcd}
$$

\noindent commutes where $\im(g_m) \cap \im(h_m) = \im(g_m e) \left (= \im(h_m f) \right)$.
In particular,
$$|\im (g_0) \cap \im (h_0)| = |\im (g_1) \cap \im (h_1)| = |A|$$
since the languages are relational, and we may assume that $|D_0| = |D_1|$ by HP.

Then we can safely endow $D_0$ with the $\mathcal{L}_1$-structure of $D_1$ in a way that $g_0$ and $h_0$ also serve as $\mathcal{L}_1$-embeddings, and say the resulting $\mathcal{L}_0 \cup \mathcal{L}_1$-structure is $D$.
Now let $g  = g_0, h = h_0$.
Then we have the desired commuting diagram where $\im(g) \cap \im(h) = \im(ge) \left (= \im(hf) \right)$.
\end{proof}

\begin{theorem}\label{thm:sum_preserve_NATP}
Assume that

\begin{itemize}
	\item both classes $\mathbb K_0$ and $\mathbb K_1$ have SAP;
	\item both theories of Fra\"{i}ss\'{e} limits $F_0$ and $F_1$ have NATP.
\end{itemize}

Then the theory of Fra\"{i}ss\'{e} limit $F_0 \oplus F_1$ has NATP.
\end{theorem}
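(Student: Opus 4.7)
The plan is to verify the NATP criterion Fact \ref{fact:criterion_NATP} for $T := \Th(F_0 \oplus F_1)$ by applying it separately in each reduct $\Th(F_m)$ ($m = 0, 1$) and then amalgamating the two witnesses using quantifier elimination of the sum. Since $\mathbb K_0 \oplus \mathbb K_1$ is Fra\"{i}ss\'{e} by Remark \ref{rem:sufficient_sum_to_be_fraisse}, $T$ has QE. Fix a monster model $\mathbb M \models T$ and $\kappa, \kappa'$ as in Fact \ref{fact:criterion_NATP} (note $2^{|T|} = 2^{\omega}$), let $(a_\eta)_{\eta \in 2^{<\kappa'}}$ be a strongly indiscernible tree (each $a_\eta$ finite by compactness), and let $b$ be a single element. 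By QE, each reduct $\mathbb M \restriction \CL_m$ is a monster model of $\Th(F_m)$ in which $(a_\eta)$ remains strongly indiscernible, and complete $T$-types over any set are determined by their two $\CL_m$-reducts.

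Color $2^\kappa$ by $\eta \mapsto \tp^T(a_\eta/b)$; since its codomain has size at most $2^{|T|}$, Fact \ref{fact:monotone_universal_antichain} yields a monochromatic universal antichain $S \subseteq 2^\kappa$, which is automatically monochromatic for each coarser coloring $\eta \mapsto \tp^{\CL_m}(a_\eta/b)$. Fix any $\rho \in S$. Applying Remark \ref{rem:arbitrary_rho_in_S} to each NATP theory $\Th(F_m)$ produces $b'_m \in \mathbb M$ such that $(a_{\rho^{\frown}0^i})_{i<\kappa'}$ is $\CL_m$-indiscernible over $b'_m$ and $b'_m \equiv^{\CL_m}_{a_\rho} b$. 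Setting $A := \{a_{\rho^{\frown}0^i} : i < \kappa'\}$, it suffices to merge $b'_0$ and $b'_1$ into a single $b' \in \mathbb M$ with $b' \equiv^{\CL_m}_A b'_m$ for each $m$: the $\CL_m$-indiscernibility of $(a_{\rho^{\frown}0^i})$ over $b'_m$ then transfers to $b'$, QE lifts the two reduct indiscernibilities to full $(\CL_0 \cup \CL_1)$-indiscernibility over $b'$, and since $a_\rho \subseteq A$ one obtains $b' \equiv^{\CL_m}_{a_\rho} b'_m \equiv^{\CL_m}_{a_\rho} b$ for each $m$, hence $b' \equiv_{a_\rho} b$ by QE.

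The main obstacle lies in this amalgamation step, which I would handle via the following lemma: for $A \subseteq \mathbb M$ and elements $c_0, c_1 \in \mathbb M$ that are non-algebraic over $A$ in their respective reducts, there is $c \in \mathbb M$ with $c \equiv^{\CL_m}_A c_m$ for both $m$. By QE and compactness, this amounts to a finite case: given finite $A_0 \subseteq A$, glue the $\CL_0$-structure on $A_0 \cup \{c_0\}$ and the $\CL_1$-structure on $A_0 \cup \{c_1\}$ along $A_0$ via a fresh point $c^*$ identifying $c_0$ with $c_1$, yielding a $(\CL_0 \cup \CL_1)$-structure on $A_0 \cup \{c^*\}$; this lies in $\mathbb K_0 \oplus \mathbb K_1$ by the definition of the sum class, and the Fra\"{i}ss\'{e} extension property embeds it back into $F_0 \oplus F_1$ over $A_0$ to produce a witness. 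The non-algebraicity hypothesis on $b'_m$, which prevents the two reduct specifications of $b'$ from pinning it to incompatible elements of $A$, is where SAP of $\mathbb K_m$ genuinely enters: by Fact \ref{fact:equiv_SAP_no_algebraicity} we have $\acl^{\CL_m}(a_\rho) = a_\rho$, and saturation of $\mathbb M$ allows us to replace each $b'_m$ by an $A$-conjugate lying outside $A$ whenever $b \notin a_\rho$, while the degenerate case $b \in a_\rho$ is handled directly by $b' := b$. The delicate part of making this plan rigorous is precisely this algebraicity bookkeeping.
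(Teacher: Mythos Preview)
Your plan is essentially the paper's: both verify Fact~\ref{fact:criterion_NATP} by picking $\rho$ in a monochromatic universal antichain via Fact~\ref{fact:monotone_universal_antichain}, invoking NATP of each $T_m$ through Remark~\ref{rem:arbitrary_rho_in_S}, and then amalgamating the two reduct witnesses by the same fresh-point gluing inside $\mathbb K_0\oplus\mathbb K_1$. The only structural difference is that the paper first shows each finite fragment $q_{0,n}\cup q_{1,n}$ is consistent and finishes with Ramsey, whereas you amalgamate the full indiscernibility witnesses $b'_0,b'_1$ over the infinite set $A$ in one shot; this bypasses the Ramsey step and is slightly cleaner, but it is the same idea.

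One correction to the bookkeeping you flag as delicate. The ``saturation allows us to replace $b'_m$ by an $A$-conjugate outside $A$'' step does not work as written: if $b'_m\in A$ then $\tp^{\CL_m}(b'_m/A)$ is isolated by $x=b'_m$ and has no other realization. Fortunately no replacement is needed. From $b\notin a_\rho$ you get $b'_m\notin a_\rho$, and indiscernibility of $(a_{\rho^\frown 0^i})_{i<\kappa'}$ over $b'_m$ already forces $b'_m\notin A$: if $b'_m$ were a coordinate of some $a_{\rho^\frown 0^i}$ with $i\ge 1$, the formula ``$x$ equals this coordinate'' would separate $a_{\rho^\frown 0^i}$ from $a_\rho$ over $b'_m$. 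Your degenerate case $b\in a_\rho$ also deserves one more line of justification: since $\rho$ lies in the monochromatic antichain $S$ and $|S|\ge 2$, $b$ occurs in the same position of $a_\eta$ for two incomparable $\eta\in S$, hence $b$ lies in the common intersection $D=a_\eta\cap a_{\eta'}$ of all tree tuples, so $b\in a_{\rho^\frown 0^i}$ for every $i$ and indiscernibility over $b'=b$ follows from strong indiscernibility of the tree.
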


\begin{proof}
Let $T_m$ be the theory of $F_m$ for $m = 0, 1$, $T$ be the theory of $F_0 \oplus F_1$ and $\mathbb M \models T$ be a monster model.
Since $T$ has quantifier elimination, for any tuple $a \in \mathbb M$ and $A \subseteq \mathbb M$, $\tp(a/A) = \qftp(a/A)$.
Note that $\qftp(a/A) = \qftp_{T_0}(a/A) \cup \qftp_{T_1}(a/A)$ (precisely saying, $\qftp(a/A)$ is the unique extension of it) $(*)$.

We again utilize Fact \ref{fact:criterion_NATP} as in the proof of Theorem \ref{thm:parametrization_preserve_NATP}.
Fix cardinals $2^{|T|} = 2^{\omega} < \kappa < \kappa'$ with $cf(\kappa)=\kappa$. Choose a strongly indiscernible tree $(a_{\eta})_{\eta\in 2^{<\kappa'}}$ of tuples and a single element $b$ in $\mathbb M$.
By Fact \ref{fact:criterion_NATP}, It suffices to show that for some $\rho \in 2^{\kappa}$, there is $b' \in \mathbb M$ such that $(a_{\rho^{\frown}0^i})_{i < \kappa'}$ is indiscernible over $b'$ and $b \equiv_{a_{\rho}} b'$.

As in the proof of Theorem \ref{thm:parametrization_preserve_NATP}, for our arguments, we can assume that each $a_{\eta}$ is a finite tuple by compactness.
Put $S(b)$ the set of complete types with a tuple of variables of length $|a_{\eta}|$, over $b$ and consider a coloring $c: 2^{\kappa} \rightarrow S(b), \eta \mapsto \tp(a_{\eta}/b)$.
By Fact \ref{fact:monotone_universal_antichain}, there is a universal antichain $S \subseteq 2^{\kappa}$, which is monochromatic with respect to the coloring $c$.
Choose arbitrary $\rho \in S$ and define $p(x, y) := \tp(b,a_{\rho})$, $q(x) := \bigcup_{i < \kappa'} p(x,a_{\rho^{\frown}0^i})$.

\begin{Claim}
$q(x)$ is consistent.
\end{Claim}

\begin{proof}
By $(*)$, we have $p(x,y) = p_0(x,y) \cup p_1(x,y)$ where $p_m(x,y)=\qftp_{T_m}(b,a_{\rho})$ for $m = 0, 1$, thus
$$q(x) = q_0(x) \cup q_1(x)$$
where $q_m(x) := \bigcup_{i < \kappa'} p_m(x,a_{\rho^{\frown}0^i})$ for $m = 0, 1$.
For each $n < \omega$, let $q_{m, n}(x) := \bigcup_{i \le n} p_m(x,a_{\rho^{\frown}0^i})$ for $m = 0, 1$.
We will show that $q_{0,n}(x) \cup q_{1,n}(x)$ is consistent for each $n \in \omega$.

Fix $n < \omega$.
Note that $q_{0,n}(x)$ and $q_{1,n}(x)$ are both consistent by Remark \ref{rem:arbitrary_rho_in_S} because each of $T_0$ and $T_1$ is NATP.
Let $b_0 \in (\mathbb M; \{R^{\mathbb{M}}_{0, i}: i \in I\})$ be a realization of $q_{0, n}$ and $(A_0; \{R_{0, i}^{A_0}: i \in I\}) \in \mathbb{K}_0$ be the (finite) $\CL_0$-structure generated by $\{a_{\rho}, a_{\rho^{\frown}0^1}, \ldots, a_{\rho^{\frown}0^n}, b_0\}$.
Likewise, let $b_1 (\in \mathbb (M; \{R^{\mathbb M}_{1, j}: j \in J\}))$ be a realization of $q_{1, n}$ and $(A_1; \{R_{1, j}^{A_1}: j \in J\}) \in \mathbb{K}_1$ be an $\mathcal{L}_1$-substructure generated by $\{a_{\rho}, a_{\rho^{\frown}0^1}, \ldots, a_{\rho^{\frown}0^n}, b_1\}$.
Then via an $\mathcal{L}_1$-isomorphism $f$ fixing $A_1 \setminus \{b_1\}$, there is an $\mathcal{L}_1$-structure $A_1' \cong A_1$ such that $f(b_1) = b_0$.
Note that $A_0 = A_1'$ as sets, $b_0 \models q_{1, n}(x)$ in $A_1'$ and $A_1' \in \mathbb K_1$.


So, the sum $A$ of $A_0$ and $A_1'$ is in $\mathbb K_0 \oplus \mathbb K_1$ and $b_0 \models q_{0, n}(x) \cup q_{1, n}(x)$ in $A$.
Since $A \setminus \{b_0\} = \{a_{\rho}, a_{\rho^{\frown}0^1}, \ldots, a_{\rho^{\frown}0^n}\}$ is a finite substructure of $(\mathbb M; \{R^{\mathbb{M}}_{0, i}: i \in I\}, \{R^{\mathbb{M}}_{1, j}: j \in J\})$, there is $b^* \in \mathbb M$ satisfying $q_{0, n}(x) \cup q_{1, n}(x)$ in $(\mathbb M; \{R^{\mathbb{M}}_{0, i}: i \in I\}, \{R^{\mathbb{M}}_{1, j}: j \in J\})$, hence $q_{0, n}(x) \cup q_{1, n}(x)$ is consistent.

By compactness and the fact that $(a_{\rho^{\frown}0^i})_{i < \kappa'}$ is indiscernible, we conclude that $q(x)$ is consistent.
\end{proof}

Now by Ramsey, compactness and Claim, there is $b' \models q(x)$ (so that $b' \equiv_{a_{\rho}} b$) such that $(a_{\rho^{\frown}0^i})_{i < \kappa'}$ is indiscernible over $b'$.
\end{proof}

    


\section{Dense/co-dense expansions}\label{sec:dense_codense}

In this section,  we check how well dense/co-dense expansions preserve NATP. First, we show that if a complete theory $T$ is geometric, then being NATP is preserved under taking some `dense/co-dense' expansions, where the same kinds of preservation are observed in \cite{BK16} and \cite{DK17} for $\operatorname{NTP}_2$ and $\operatorname{NTP}_1$ respectively.
Exactly saying, we are interested in theories of lovely pairs of models and H-structures.
Following the scheme of \cite{DK17}, we collect some facts concerning these theories, and another criterion of having ATP in terms of the number of realizations of antichains and paths is given, which is an ATP version of \cite[Proposition 4.1]{DK17} for SOP$_2$.
Then we state a proposition saying `ATP inside of a formula' and get into the preservation result of this subsection.

Secondly, we prove that some dense/co-dense expansions on vector spaces also preserve NATP. Recently, Berenstein, d'Elbee, and Vassiliev in \cite{BDV22} show that a certain dense/co-dense expansion on a vector space $V$ over a field $\mathbb{F}$ with characteristic 0 whose generic predicate $G$ is an $R$-submodule of $V$, preserves stability, simplicity, NIP, NTP$_1$, NTP$_2$, and NSOP$_1$, where $R$ is a subring of $\mathbb{F}$. We show that the same construction also preserves NATP.

\subsection{Lovely pairs and H-structures on geometric theories}\label{subsec:lovely}

To begin with, we recall some basic definitions first.

\begin{definition}\label{def:geometric}
A theory $T$ is called {\em geometric} if

\begin{enumerate}
    \item it eliminates $\exists^{\infty}$:
    For each $\varphi(x) \in \mathcal{L}$, there is $n < \omega$ such that $\exists^{\infty}x \varphi(x)$ if and only if $\exists^{\geq n}x \varphi(x)$;
    \item algebraic closure satisfies the exchange property:
    For any $M \models T$, single elements $a, b \in M$ and $A \subseteq M$, if $a \in \acl(Ab) \setminus \acl(A)$, then $b \in \acl(Aa)$.
\end{enumerate}

\end{definition}

\begin{definition}\label{def:algebraic_indep}
Let $T$ be a first-order theory and $A, B, C$ any subsets of $M \models T$.
\begin{enumerate}
    \item The symbol $\aclindep$ denotes the {\em algebraic independence} relation:
    $$A \aclindep[C] B \text{ if } \acl(AC) \cap \acl(BC) = \acl(C).$$
    \item $A$ is {\em $\acl$-independent} (or {\em algebraically independent}) over $B$ if for any single element $a \in A$, $a \aclindep_B (A \setminus \{a\})$.
    If $B = \emptyset$, then we may omit $B$.
    \item $A$ has {\em finite dimension} if there is finite $B$ such that $A \subseteq \acl(B)$.
\end{enumerate}
\end{definition}

By $\acl_T$, we mean the algebraic closure with underlying theory $T$ (and language $\mathcal{L}$) to distinguish it from the one with an extended theory.

\begin{Definition}[Definition 2.1 of \cite{BK16}]
Let $T$ be a geometric complete theory in a language $\mathcal{L}$ and $\mathcal{L}_H = \mathcal{L} \cup \{H\}$ be the expanded language obtained by adding a new unary predicate symbol $H$.
For any model $M \models T$, $(M, H(M))$ denotes an expansion of $M$ to $\mathcal{L}_H$ where $H(M) = \{a \in M$ : $\models H(a)\}$.
\begin{enumerate}
    \item $(M, H(M))$ is called a \emph{dense/co-dense} expansion if every non-algebraic 1-type in $\mathcal{L}$ over a finite dimensional subset $A \subseteq M$ has realizations both in $H(M)$ and in $M \setminus \acl_T(A \cup H(M))$.
    \item A dense/co-dense expansion $(M, H(M))$ is called a \emph{lovely pair} if $H(M)$ is an elementary $\mathcal{L}$-substructure of $M$.
    \item A dense/co-dense expansion $(M, H(M))$ is called an \emph{H-structrue} if $H(M)$ is an $\mathcal{L}$-algebraically independent subset of $M$.
\end{enumerate}
\end{Definition}

\begin{Fact/Definition}[\cite{BV16}, \cite{BV10}, stated as Fact 2.2 in \cite{DK17}]
Let $T$ be any geometric complete theory.
Then it admits a lovely pair and all of its lovely pairs are elementarily equivalent to one another.
The same holds for $H$-structures.

$T_p$ and $T^{\operatorname{ind}}$ denote the complete theories of the lovely pairs and the $H$-structures respectively, associated with $T$.
By $T^*$, we shall mean either $T_p$ or $T^{\operatorname{ind}}$.
\end{Fact/Definition}

\textbf{In the rest of this subsection, we fix a complete theory $T^* ( = T_p$ or $T^{\operatorname{ind}}$) and a sufficiently saturated model $(\mathbb{M}, H(\mathbb{M})) \models T^*$}.
For convenience, when $x$ is a finite tuple of variables $x_0 \cdots x_n$, $H(x)$ means the conjunction $H(x_0) \wedge \cdots \wedge H(x_n)$ and when $A$ is a subset of $\mathbb{M}$, $H(A)$ denotes the set $\{a \in A$ : $\models H(a)\}$.

\begin{Definition}
For any set $A$, let $\scl(A) := \acl_T(A \cup H(\mathbb{M}))$.
We say an $\mathcal{L}_H$-formula $\varphi(x)$ with a single free variable $x$ is \emph{$A$-small} if its solution set is contained in $\scl(A)$.
\end{Definition}

\begin{Definition}
A subset $A \subseteq \mathbb{M}$ is said to be \emph{$H$-independent} if $A \aclindep[H(A)] H(\mathbb{M})$.
\end{Definition}

We collect several more facts from \cite{BK16} (which are also stated in \cite{DK17}).

\begin{Fact}[\cite{BK16}, Proposition 2.9]\label{fact:L_H_coincides_with_L_on_H}
For any $\mathcal{L}_H$-formula $\varphi(x,y)$ and an $H$-independent tuple $a$, there is some $\mathcal{L}$-formula $\psi(x,y)$ such that $\varphi(x,a) \wedge H(x) \leftrightarrow \psi(x,a) \wedge H(x)$.
\end{Fact}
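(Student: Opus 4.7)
The plan is to reduce the statement, via compactness, to a type-preservation claim: for any tuples $b, b' \in H(\mathbb{M})^{|x|}$ with $\tp_{\mathcal{L}}(b/a) = \tp_{\mathcal{L}}(b'/a)$, one has $\tp_{\mathcal{L}_H}(b/a) = \tp_{\mathcal{L}_H}(b'/a)$. Once this is established, a routine compactness argument over the Stone space of complete $\mathcal{L}$-types over $a$ produces a single $\mathcal{L}$-formula $\psi(x,y)$ that agrees with $\varphi(x,a)$ on $H$-tuples (with parameter $\mathcal{L}$-type depending only on $\tp_{\mathcal{L}}(a)$; a second round of compactness uniformizes over the $\mathcal{L}$-type of $a$ if one wants $\psi$ depending only on $\varphi$).

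To establish the type-preservation claim, I would construct, by back-and-forth using saturation of $(\mathbb{M}, H(\mathbb{M}))$, an $\mathcal{L}_H$-automorphism $\tau$ of $\mathbb{M}$ fixing $a$ pointwise and mapping $b$ to $b'$; equivalently, an $\mathcal{L}$-automorphism that fixes $a$, sends $b$ to $b'$, and stabilizes $H(\mathbb{M})$ setwise. The construction proceeds in two steps. \emph{Step (i):} produce an $\mathcal{L}$-isomorphism $\sigma$ between small $\mathcal{L}$-substructures inside $H(\mathbb{M})$ that extends $b \mapsto b'$ and fixes $H(a)$. In the lovely-pair case this uses that $H(\mathbb{M}) \prec_{\mathcal{L}} \mathbb{M}$, so the realized $\mathcal{L}$-type of $b$ over $H(a)$ is already witnessed inside $H(\mathbb{M})$ by $b'$, and one iterates this using saturation of $H(\mathbb{M})$. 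In the $H$-structure case one exploits that $H(\mathbb{M})$ is $\mathcal{L}$-algebraically independent, so the partial map $b \mapsto b'$ over $H(a)$ can be extended throughout $H(\mathbb{M})$ while preserving algebraic independence, with the dense/co-dense property supplying the new independent witnesses at each step. \emph{Step (ii):} use the $H$-independence hypothesis $a \aclindep_{H(a)} H(\mathbb{M})$, combined with the exchange property from the geometricity of $T$, to extend $\sigma$ to an $\mathcal{L}$-automorphism $\tau$ of $\mathbb{M}$ fixing $a$. The point is that since $a$ is algebraically independent from $H(\mathbb{M})$ over $H(a)$, the map $\sigma \cup \mathrm{id}_a$ extends to $\acl_T(a \cup H(\mathbb{M}))$ without any forced value on $a$; saturation then completes the extension to all of $\mathbb{M}$.

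I expect the main obstacle to lie in Step (ii): one must verify that the $\mathcal{L}$-automorphism of $H(\mathbb{M})$ built in Step (i) is compatible with $\mathrm{id}_a$ over the common algebraic closure $\acl_T(a H(a))$. This compatibility is precisely what the $H$-independence hypothesis delivers, since the intersection condition $\acl_T(a H(a)) \cap \acl_T(H(\mathbb{M})) = \acl_T(H(a))$ ensures that $\sigma|_{\acl_T(H(a))} = \mathrm{id}$ is forced by fixing $H(a)$, with no further constraints on $a$ imposed by $H(\mathbb{M})$. The check is straightforward in the $T_p$ case, where $H(\mathbb{M})$ is an elementary substructure, but more delicate for $T^{\operatorname{ind}}$, where one must additionally confirm that $\tau(H(\mathbb{M})) = H(\mathbb{M})$ remains an $\mathcal{L}$-algebraically independent subset—again a consequence of the dense/co-dense property, which guarantees the abundance of such independent sets.
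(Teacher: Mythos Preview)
The paper does not prove this statement; it is quoted as a fact from \cite{BK16}, so there is no in-paper argument to compare against. Your reduction to the type-preservation claim is the right first move, and the clean route from there (as in \cite{BV10}, \cite{BV16}, \cite{BK16}) is to note that $ab$ and $ab'$ are both $H$-independent with the same $\mathcal{L}$-type and the same $H$-pattern, and then invoke the characterization of $\mathcal{L}_H$-types of $H$-independent tuples. That characterization is itself proved by a \emph{single} back-and-forth starting from the partial map $ab \mapsto ab'$ and extending one element at a time over all of $\mathbb{M}$, using density to land in $H$ and co-density to avoid $\scl$ at each step.

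Your two-step decomposition has a real gap at Step~(ii). You need $\sigma \cup \mathrm{id}_a$ to be $\mathcal{L}$-elementary, i.e.\ $\tp_{\mathcal{L}}(a/\bar h) = \tp_{\mathcal{L}}(a/\sigma(\bar h))$ for every finite $\bar h \subseteq H(\mathbb{M})$. The hypothesis $a \aclindep_{H(a)} H(\mathbb{M})$ only controls intersections of algebraic closures; in a general geometric theory algebraic independence is not stationary, so this does not force $\tp_{\mathcal{L}}(a/H(\mathbb{M}))$ to be $\sigma$-invariant. Concretely, take $T = \DLO$, let $a$ be a single point not in $H(\mathbb{M})$ (so $H(a)=\emptyset$ and $a$ is trivially $H$-independent), and let $\sigma$ be any order-automorphism of $H(\mathbb{M})$ that does not preserve the cut of $a$; then $\sigma \cup \mathrm{id}_a$ is not elementary. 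Your claim that the intersection condition leaves ``no further constraints on $a$'' conflates algebraic constraints with type-theoretic ones. The fix is not to build $\sigma$ on $H(\mathbb{M})$ first and glue, but to run the back-and-forth from $ab \mapsto ab'$ over all of $\mathbb{M}$ from the outset.
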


\begin{Fact}[\cite{BK16}, Proposition 2.11]\label{fact:sym_diff_small}
For any $\mathcal{L}_H$-formula $\varphi(x,y)$ where $x$ is a single variable and for any $H$-independent tuple $a$, there is some $\mathcal{L}$-formula $\psi(x,y)$ such that the symmetric difference $\varphi(x,a) \Delta \psi(x,a)$ defines an $a$-small subset of $\mathbb{M}$.
\end{Fact}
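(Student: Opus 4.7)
The plan is to combine a structural description of $\mathcal{L}_H$-formulas for $T^*$ (see \cite{BV16} for lovely pairs and \cite{BV10} for $H$-structures) with the observation that ``equal modulo an $a$-small set'' defines an equivalence relation compatible with finite Boolean operations: since $\scl(a)$ is $\acl_T$-closed and finite unions of $a$-small sets are $a$-small, the class of subsets of $\mathbb{M}$ in the single variable $x$ that coincide with an $\mathcal{L}(a)$-definable set modulo an $a$-small set forms a Boolean algebra. Thus if every $\mathcal{L}_H$-formula $\varphi(x,y)$ is, modulo $T^*$, equivalent to a Boolean combination of basic formulas of the form $\exists \bar z\,\big(H(\bar z) \wedge \theta(x,y,\bar z)\big)$ with $\theta \in \mathcal{L}$, it suffices to handle one such basic formula.

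For a basic formula $\exists \bar z\,\big(H(\bar z) \wedge \theta(x,a,\bar z)\big)$ with $H$-independent $a$, the strategy is to dichotomize on whether a witness $\bar z \in H(\mathbb{M})$ can be chosen with $(\bar z, x)$ algebraically independent over $a$ in $T$. On the ``generic'' side, after possibly enlarging $a$ by finitely many further $H$-elements without destroying $H$-independence (a standard move in the \cite{BK16} framework), Fact \ref{fact:L_H_coincides_with_L_on_H} rewrites $\theta(x,a,\bar z) \wedge H(\bar z)$ as $\theta'(x,a,\bar z) \wedge H(\bar z)$ for some pure $\mathcal{L}$-formula $\theta'$, and existentially quantifying $\bar z$ away produces a candidate $\mathcal{L}$-formula $\psi(x,a)$. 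On the ``algebraic'' side, where every satisfying $H$-witness $\bar z$ is forced to lie in $\acl_T(xa)$, one must show the set of such $x$'s is $a$-small.

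The main obstacle will be controlling this algebraic exceptional set. The idea is to apply elimination of $\exists^{\infty}$ to the formula $\theta(x,a,\bar z)$ so as to bound, by a pure $\mathcal{L}(a)$-formula, the locus of $x$'s for which $\theta(x,a,\bar z)$ has only finitely many $\bar z$-solutions; then the exchange property of $\acl_T$ transfers algebraicity of $\bar z$ over $xa$ into algebraicity of $x$ over $a \cup H(\mathbb{M})$, confining the exceptional $x$'s to $\scl(a)$, which is precisely $a$-small. The bookkeeping of which enlarged parameter tuples remain $H$-independent across the dichotomy is delicate but routine, and the induction on the Boolean structure of $\varphi$ then closes to give the claim.
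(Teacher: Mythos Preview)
The paper does not give a proof of this statement: it is quoted as a Fact from \cite[Proposition~2.11]{BK16} and used as a black box, so there is nothing in the present paper to compare your argument against.

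Your overall strategy---reduce via near model completeness to basic formulas $\exists\bar z\,(H(\bar z)\wedge\theta(x,a,\bar z))$ with $\theta\in\mathcal{L}$, then split each such formula into a generic part (handled by density of $H$) and an algebraic part (shown $a$-small via $\exists^\infty$-elimination and exchange)---is indeed the shape of the argument in \cite{BK16}. One step of your sketch, however, is confused: you invoke Fact~\ref{fact:L_H_coincides_with_L_on_H} to rewrite $\theta(x,a,\bar z)\wedge H(\bar z)$ as $\theta'(x,a,\bar z)\wedge H(\bar z)$ with $\theta'\in\mathcal{L}$, but $\theta$ is already an $\mathcal{L}$-formula by your own setup, so this step is vacuous and that fact plays no role here. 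The real content on the generic side is the density axiom itself: if $\theta(b,a,\bar z)$ admits a solution $\bar z$ algebraically independent over $ba$, then density (applied coordinatewise) produces such a $\bar z$ inside $H(\mathbb{M})$, and the locus of such $b$ is carved out by an $\mathcal{L}(a)$-formula thanks to $\exists^\infty$-elimination---this, not Fact~\ref{fact:L_H_coincides_with_L_on_H}, is what yields the candidate $\psi$. On the algebraic side your exchange argument also needs the witness $\bar z$ to have a coordinate outside $\acl_T(a)$; when all of $\bar z$ already lies in $\acl_T(a)\cap H(\mathbb{M})$ one instead absorbs those coordinates into the (enlarged, still $H$-independent) parameter tuple and inducts on $|\bar z|$. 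With these corrections the sketch becomes essentially the proof in \cite{BK16}.
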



Until Proposition \ref{prop:ATP_inside_type}, the underlying $T$ is any complete theory (not necessarily $T = T^*$).
Proposition \ref{prop:char_of_ATP} is an analogue of \cite[Proposition 4.1]{DK17}.

\begin{Remark}\label{rem:univ_antic_has_inf_sol}
Recall that if a complete theory $T$ has ATP, then there are $\varphi(x; y) \in \mathcal{L}$ and a strongly indiscernible tree $(a_{\eta})_{\eta \in 2^{\leq \omega}}$ that witness ATP by Remark \ref{rem:witness_of_ATP}.
For this witness, $\{\varphi(x,a_{\eta}): \eta \in {2^{\omega}}\}$ has infinitely many realizations.
\end{Remark}

\begin{proof}
Easy to verify using strong indiscernibility and compactness.
\end{proof}

\begin{Proposition}\label{prop:k-ATP_wit_ATP}
If an $\mathcal{L}$-formula $\varphi(x,y)$ has $k$-ATP for some $k < \omega$, then for some $d < \omega$, the $\mathcal{L}$-formula $\psi(x; y_0, \ldots, y_d) \equiv \bigwedge_{i \leq d}\varphi(x, y_i)$ has ATP.
\end{Proposition}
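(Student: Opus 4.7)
My plan is to lift the strongly indiscernible witness of $k$-ATP for $\varphi$ to a witness of ATP for the conjunction $\psi$, by assigning to each node of a new binary tree a tuple of parameters coming from a short chain in the old tree. I would first invoke Remark~\ref{rem:witness_of_ATP} (whose proof for ATP works identically for $k$-ATP) to fix a strongly indiscernible tree $(a_{\eta})_{\eta \in 2^{\leq \omega}}$ witnessing $k$-ATP for $\varphi$. Without loss of generality I would also assume that $k$ is the \emph{smallest} integer $\geq 2$ for which $\varphi$ has $k$-ATP; then since all chains of length $k-1$ in $2^{\leq \omega}$ share the same strong isomorphism type, strong indiscernibility forces $\{\varphi(x,a_{\eta_0}),\ldots,\varphi(x,a_{\eta_{k-2}})\}$ to be consistent for every chain $\eta_0 \lhd \cdots \lhd \eta_{k-2}$ of length $k-1$, as otherwise $(a_{\eta})$ would also witness $(k-1)$-ATP.

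Set $d := k-2$. I would then define a ``stretching'' map $\nu \mapsto \hat\nu$ on $2^{<\omega}$ recursively by $\hat{\langle\rangle} := \langle\rangle$ and $\widehat{\nu \coc \langle c \rangle} := \hat\nu \coc 0^{d+1} \coc \langle c \rangle$ for $c \in \{0,1\}$, and set
\[
b_{\nu} := \bigl(a_{\hat\nu \coc 0^{0}},\, a_{\hat\nu \coc 0^{1}},\, \ldots,\, a_{\hat\nu \coc 0^{d}}\bigr).
\]
A routine induction on the length of the shorter argument shows the following: if $\nu \lhd \nu'$ then $\hat\nu \coc 0^{d+1} \trianglelefteq \hat{\nu'}$, so that the $2(d+1) = 2(k-1)$ nodes $\{\hat\nu \coc 0^{j}\}_{j\leq d} \cup \{\hat{\nu'} \coc 0^{j}\}_{j\leq d}$ form a single chain in the old tree; and if $\nu \perp \nu'$ then $\hat\nu \wedge \hat{\nu'} = \widehat{\nu \wedge \nu'} \coc 0^{d+1}$, so that each $\hat\nu \coc 0^{j}$ is incomparable with each $\hat{\nu'} \coc 0^{j'}$.

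From these observations, inconsistency of $\psi(x, b_{\nu}) \wedge \psi(x, b_{\nu'})$ for $\nu \lhd \nu'$ is immediate: this formula is the conjunction of $\varphi$-instances over $2(k-1) \geq k$ pairwise comparable nodes of the old tree, hence inconsistent by the $k$-ATP clause. The remaining step, which is the main obstacle, is to verify that for any antichain $Y \subseteq 2^{<\omega}$ of the new tree the set $\{\psi(x, b_{\nu}) : \nu \in Y\}$ is consistent; equivalently, $\{\varphi(x, a_{\eta}) : \eta \in S\}$ must be consistent where $S := \bigcup_{\nu \in Y} \{\hat\nu \coc 0^{j} : j \leq d\}$ is a union of pairwise incomparable chains each of length $k-1$. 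Since $S$ is not itself an antichain, the antichain clause of $k$-ATP does not apply directly. My plan to close this gap is, by compactness, to reduce to finite $Y$ and then to exploit strong indiscernibility together with the array modeling property (Fact~\ref{modeling property}(2)): build an array $(a_{\xi_i \coc 0^{j}})_{(i,j) \in \omega \times \omega}$ for an antichain $(\xi_i)$ in the old tree, extract an indiscernible array via the array modeling property, and combine the consistency of a single $(k-1)$-chain (guaranteed by the minimality of $k$) with the antichain direction of the array to obtain a common realizer of the full conjunction across all columns simultaneously.
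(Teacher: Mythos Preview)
Your reduction is set up correctly through the definition of the stretching map and of $b_\nu$, and your verification that $\psi(x,b_\nu)\wedge\psi(x,b_{\nu'})$ is inconsistent for $\nu\lhd\nu'$ is fine. The genuine gap is exactly the step you flag: showing that $\{\varphi(x,a_\eta):\eta\in S\}$ is consistent when $S$ is a finite union of pairwise incomparable $(k-1)$-chains. Your proposed fix via the array modeling property does not close this gap. Extracting an indiscernible array $(d_{i,j})$ locally based on $(a_{\xi_i\coc 0^j})$ only tells you that every finite $\CL_{ar}$-configuration of $d$'s has the same type as \emph{some} configuration of $a$'s with the same array-type; it does not produce a common realizer of $\bigwedge_{i<n,\,j<k-1}\varphi(x,d_{i,j})$. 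You know each column of height $k-1$ is consistent and every transversal (one entry per column) is consistent, but there is no general principle---certainly not one provided by array indiscernibility alone---that lets you ``combine the column direction with the antichain direction'' to realize the whole $n\times(k-1)$ block. That would amount to an amalgamation statement you have not established.

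The paper does not give an independent argument here; it simply invokes the proof of \cite[Lemma~3.20]{AKL22}, which carries out the full $k$-ATP $\Rightarrow$ ATP reduction and in particular handles the consistency of these ``forest'' configurations. So either cite that lemma directly, or supply an actual argument for the consistency of unions of pairwise incomparable short chains in a strongly indiscernible $k$-ATP witness---this is the substantive combinatorial content of the proposition, and your sketch does not yet provide it.
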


\begin{proof}
By the proof of \cite[Lemma 3.20]{AKL23}.
\end{proof}

The following proposition gives another criterion for a theory to be ATP, in terms of the number of realizations of a universal antichain and paths.

\begin{Proposition}\label{prop:char_of_ATP}
A complete theory $T$ has ATP if there is a formula $\varphi(x,y) \in \mathcal{L}$ and a strongly indiscernible tree $(a_{\eta})_{\eta \in 2^{\leq \omega}}$ such that
\begin{enumerate}
    \item $\{\varphi(x,a_{\eta}) : \eta \in {2^{\omega}}\}$ has infinitely many realizations;
    \item $\varphi(x,a_{\eta}) \wedge \varphi(x,a_{\eta^{\frown} 0})$ has finitely many realizations for each $\eta \in 2^{< \omega}$.
\end{enumerate}
\end{Proposition}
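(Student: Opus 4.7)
The plan is to build an ATP witness by forming a self-conjunction of $\varphi$ with distinctness constraints on the $x$-variables. First, from (2) and strong indiscernibility I would extract a uniform upper bound $N<\omega$ on the number of common realizations along a comparable pair: any two comparable pairs $(\eta,\eta')$, $(\nu,\nu')$ in $2^{<\omega}$ are strongly isomorphic in $\CL_0=\{\triangleleft,<_{lex},\wedge\}$, since each satisfies the same $\triangleleft$ and $<_{lex}$, and has meet equal to the smaller element. Thus $\tp(a_\eta a_{\eta'})=\tp(a_\nu a_{\nu'})$, and by (2) the number of realizations of $\varphi(x,a_\eta)\wedge\varphi(x,a_{\eta'})$ is a fixed finite $N$ for every comparable pair.

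Second, I would boost (1) from the leaf-antichain $2^\omega$ to every finite antichain of $2^{<\omega}$. By Remark/Definition \ref{rem/def:univ_antichain}, $2^\omega\subseteq 2^{<\omega+1}=2^{\le\omega}$ is a universal antichain, so for any finite antichain $Y\subseteq 2^{<\omega}$ there is $X_0\subseteq 2^\omega$ with $Y\sim_0 X_0$. Strong indiscernibility identifies the $\CL$-types and hence the sizes of the realization sets (including "at least $k$" for every $k$); since $\{\varphi(x,a_{\eta'}):\eta'\in X_0\}$ inherits the infinitely many common realizations from (1), so does $\{\varphi(x,a_\eta):\eta\in Y\}$.

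Third, I would set
\[
\psi(x_0,\dots,x_N;\,y)\;:=\;\bigwedge_{i\le N}\varphi(x_i,y)\;\wedge\;\bigwedge_{0\le i<j\le N} x_i\ne x_j
\]
and claim that $\psi$ together with the restricted tree $(a_\eta)_{\eta\in 2^{<\omega}}$ witnesses ATP. A realization of $\{\psi(\bar{x},a_\eta):\eta\in Y\}$ for an antichain $Y$ is exactly a choice of $N+1$ pairwise distinct common realizations of $\{\varphi(x,a_\eta):\eta\in Y\}$, and these exist by the second step. A realization of $\{\psi(\bar{x},a_\eta),\psi(\bar{x},a_{\eta'})\}$ for a comparable pair $\eta\triangleleft\eta'$ would supply $N+1$ pairwise distinct realizations of $\varphi(x,a_\eta)\wedge\varphi(x,a_{\eta'})$, contradicting the uniform bound $N$ from the first step.

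The main obstacle I foresee is isolating the correct formula $\psi$: hypothesis (2) only yields "finitely many" rather than outright inconsistency, and the bridge is to hard-code "$N+1$ pairwise distinct witnesses" into $\psi$, after which both clauses of ATP reduce immediately to the uniform bound on comparable pairs and to the universal-antichain transfer of (1).
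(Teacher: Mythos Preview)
Your argument is correct and takes a genuinely different route from the paper's. The paper keeps a single free variable $x$: it notes that the decreasing chain $\bigcap_{l<n}\varphi(\mathbb M,a_{0^l})$ is finite from $n=2$ on and hence stabilizes at some $k$, then sets
\[
\psi(x;y,y_0,\ldots,y_{k-1})\ \equiv\ \varphi(x,y)\wedge\neg\bigwedge_{i<k}\varphi(x,y_i)
\]
and uses the shifted tree $b_\eta=a_{0^{k\frown}\eta}\,a_\emptyset a_0\cdots a_{0^{k-1}}$. This produces only $k$-ATP, after which the paper invokes Proposition~\ref{prop:k-ATP_wit_ATP} to upgrade to ATP. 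By contrast, you read off a uniform bound $N$ on comparable pairs (all strictly comparable pairs have the same $\CL_0$-quantifier-free type, so this is legitimate), inflate the object tuple to $x_0,\ldots,x_N$ with pairwise distinctness, and obtain $2$-ATP directly on the original tree. Your version is more self-contained, avoiding the detour through $k$-ATP; the paper's version has the small advantage that the resulting ATP formula still has a single object variable $x$, which matches how the proposition is consumed later in Section~\ref{subsec:lovely} after the single-variable reduction.
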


\begin{proof}
By (2), $\{\varphi(x, a_{0^l}) : l < \omega\}$ has finitely many realizations, thus there is $k < \omega$ such that the solution set of $\{\varphi(x, a_{0^l}) : l < \omega\}$ is the same as $\{\varphi(x, a_{0^l}) : l < k\}$.
Note that the solution set of $\{\varphi(x, a_{0^l}) : l < k\}$ is the same as $\{\varphi(x, a_{0^l}) : l < 2k\}$ by our choice of $k$, but by strong indiscernibility, the number of realizations of $\{\varphi(x, a_{0^l}) : l < k\}$ and $\{\varphi(x, a_{0^l}) : k \leq l < 2k\}$ are the same, thus in fact the solution sets of them are all the same $(*)$.

Put $b_{\eta} = a_{0^{k^{\frown}} \eta} a_{\emptyset} a_0 \cdots a_{0^{k - 1}} $ and $\psi(x; y, y_0, \ldots, y_{k - 1}) \equiv \varphi(x, y) \wedge \neg(\bigwedge_{i < k}\varphi(x, y_i))$.
Note that $(b_{\eta})_{\eta \in 2^{\leq \omega}}$ is strongly indiscernible over $a_{\emptyset}a_0 \cdots a_{0^{k - 1}}$ and $\{\psi(x, b_{0^l}) : l < k\}$ is inconsistent by $(*)$.
Also, $\{\psi(x, b_{\eta}) : \eta \in 2^{\omega}\}$ is consistent by Remark \ref{rem:univ_antic_has_inf_sol} and then by strong indiscernibility and unversality of $2^{\omega}$, it follows that every antichain is consistent.
Thus $(b_{\eta})_{\eta \in 2^{\leq \omega}}$ with $\psi(x; y, y_0, \ldots, y_{k - 1})$ witnesses $k$-ATP.
Now by Proposition \ref{prop:k-ATP_wit_ATP}, $T$ has ATP.
\end{proof}

\begin{Definition}
A formula $\varphi(x;y)$ is said to have ATP inside of a partial type $q(x)$ if there are tuples $(a_{\eta})_{\eta \in 2^{<\omega}}$ satisfying:
$q(x) \cup \{\varphi(x,a_{\eta}) : \eta \in X\}$ is consistent if and only if $X \subseteq 2^{<\omega}$ is an antichain. If $q(x)$ is a singleton set of a formula $\{\rho(x)\}$, then we say $\varphi$ has ATP inside of $\rho(x)$.
\end{Definition}

If a theory has ATP inside of some conjunction of finite types having distinct single variables, then one can find a formula that has ATP inside of one of the types.
Since it can be proved using the same argument in \cite[Theorem 3.17]{AKL23}, we omit the proof.

\begin{Proposition}\label{prop:ATP_inside_type}
Let $\rho_0(x_0), \ldots, \rho_{n - 1}(x_{n - 1})$ be $\mathcal{L}$-formulas where $|x_i| = 1$ for each $i < n$.
If $\varphi(x_0, \ldots, x_{n - 1}; y) \in \mathcal{L}$ has ATP inside of $\bigwedge_{i < n}\rho_i(x_i)$, then for some $i < n$, there is $\psi(x_i; z) \in \mathcal{L}$ having ATP inside of $\rho_i(x_i)$.
\end{Proposition}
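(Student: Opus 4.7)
My plan is to adapt the argument of \cite[Theorem 3.17]{AKL22}, which proves the absolute version (no $\rho_i$-restrictions), to the present relative setting. First I would apply Fact \ref{modeling property}(1) together with the standard manipulation described in Remark \ref{rem:witness_of_ATP} to pass to a strongly indiscernible witness $(a_\eta)_{\eta \in 2^{\leq \omega}}$ for the ATP of $\varphi(x_0, \ldots, x_{n-1}; y)$ inside $\bigwedge_{i<n}\rho_i(x_i)$. Once this is done, all subsequent constructions may freely invoke strong indiscernibility of the index tree.

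Next I would proceed by induction on $n$. The case $n=1$ is immediate, taking $\psi := \varphi$ and $i := 0$. For $n \geq 2$ the task reduces to producing a formula in $n-1$ object variables witnessing ATP inside $\bigwedge_{j < n-1}\rho_j(x_j)$ against (the same, or a slight re-coding of) the tree $(a_\eta)_{\eta \in 2^{<\omega}}$; iterating this reduction $n-1$ times isolates a single variable $x_i$ and yields the required $\psi(x_i; z)$.

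For the reduction step I would mimic the variable-deletion procedure in \cite[Theorem 3.17]{AKL22}: for each antichain $X$ pick a realization $\bar c^X$ of $\bigwedge_{\eta \in X}\varphi(\bar x; a_\eta)\wedge\bigwedge_{i<n}\rho_i(x_i)$, then try to describe the ``projection'' of these realizations by a formula obtained by existentially quantifying $x_{n-1}$ (equipped with $\rho_{n-1}(x_{n-1})$), possibly paired with a finite conjunction of translates of $\varphi$ along chosen auxiliary parameters. Antichain-consistency transfers immediately via projection of $\bar c^X$ to its first $n-1$ coordinates, and the path-inconsistency clause is recovered by combining strong indiscernibility with Proposition \ref{prop:k-ATP_wit_ATP}.

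The main obstacle is precisely this last clause. The naive candidate
$$\psi'(x_0,\ldots,x_{n-2};y)\ :=\ \exists x_{n-1}\bigl(\varphi(\bar x;y)\wedge\rho_{n-1}(x_{n-1})\bigr)$$
preserves antichain-consistency but can destroy path-inconsistency, since distinct $x_{n-1}$-witnesses may serve comparable nodes $\eta\vartriangleleft\nu$. The remedy — the technical heart of \cite[Theorem 3.17]{AKL22} — is to pair the parameter $y$ with several auxiliary parameters drawn from sibling nodes and to use strong indiscernibility to align the existential witnesses along sufficiently long paths, thereby producing $k$-ATP of the reduced formula inside $\bigwedge_{j < n-1}\rho_j(x_j)$; an application of Proposition \ref{prop:k-ATP_wit_ATP} then upgrades $k$-ATP back to ATP, closing the induction.
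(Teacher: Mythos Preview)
Your proposal is correct and follows exactly the route the paper indicates: the paper omits the proof entirely, simply stating that ``it can be proved using the same argument in \cite[Theorem 3.17]{AKL22}'', and your sketch is a faithful outline of how that adaptation goes (strong modeling, induction on $n$, variable-deletion with auxiliary sibling parameters, and the $k$-ATP-to-ATP upgrade via Proposition \ref{prop:k-ATP_wit_ATP}). The only minor point to keep track of is that Proposition \ref{prop:k-ATP_wit_ATP} is stated in absolute form, so in the relative setting you should note that its proof (manipulating the witnessing tree) goes through verbatim ``inside of $\bigwedge_{j<n-1}\rho_j(x_j)$''.
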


Now the underlying theory is $T^*$ again until the end of this section.

\begin{Lemma}\label{lem:ATP_wit_H-indep}
Let $\varphi(x; y)$ be any $\mathcal{L}_H$-formula witnessing ATP.
Then for some tuple of dummy variables $z$, the formula $\varphi(x; yz)$ witnesses ATP with some strongly indiscernible tree consisting of $H$-independent tuples.

Moreover, the tree-indexed tuples that witness ATP of $\varphi(x; yz)$ can be chosen to have the same tree-index as that of $\varphi(x; y)$, that is, for example, if $(a_{\eta})_{\eta \in 2^{\leq \omega}}$ witnesses ATP of $\varphi(x; y)$, then some $(b_{\eta} h_{\eta})_{\eta \in 2^{\leq \omega}}$ where each $b_{\eta} h_{\eta}$ is $H$-independent witnesses ATP of $\varphi(x; yz)$.
\end{Lemma}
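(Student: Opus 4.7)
The plan is to enlarge each parameter in a given ATP witness by a finite tuple from $H(\mathbb{M})$ so as to enforce $H$-independence, and then apply the strong modeling property to extract a strongly indiscernible tree that preserves both the ATP behaviour and the $H$-independence.

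By Remark \ref{rem:witness_of_ATP}, I first fix a strongly indiscernible witness $(a_\eta)_{\eta \in 2^{\leq \omega}}$ of ATP for $\varphi(x;y)$. Since $T$ is geometric and $\acl_T$ is therefore a pregeometry, standard finite-dimensional arguments produce, for each $\eta$, a finite tuple $h_\eta \subseteq H(\mathbb{M})$ such that $a_\eta h_\eta$ is $H$-independent (morally, $h_\eta$ selects enough elements of $H(\mathbb{M})$ to span $\acl_T(a_\eta) \cap \acl_T(H(\mathbb{M}))$). Strong indiscernibility forces all $a_\eta$ to share the same $\mathcal{L}_H$-type, so the required length $k$ may be taken uniformly in $\eta$. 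I then apply Fact \ref{modeling property}(1) to the tree-indexed set $(a_\eta h_\eta)_{\eta \in 2^{<\omega}}$ to obtain a strongly indiscernible tree $(b_\eta h^*_\eta)_{\eta \in 2^{<\omega}}$ strongly locally based on it, and extend to a strongly indiscernible tree indexed by $2^{\leq \omega}$ via compactness and universal antichain arguments as in Remark \ref{rem:witness_of_ATP}.

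Two verifications complete the argument. For the ATP property of $\varphi(x; yz)$, where $z$ is a dummy tuple of length $k$: since $\varphi(x; yz)$ does not genuinely depend on $z$, the required conditions reduce to antichain consistency and $2$-chain inconsistency for $\{\varphi(x; b_\eta)\}$, each of which is a finitary $\mathcal{L}_H$-statement of the form $\exists x \bigwedge_{i < n} \varphi(x; b_{\eta_i})$ or its negation. Including these in the finite set $\Delta$ used in the local basing transfers the patterns directly from $(a_\eta)$ to $(b_\eta)$. The \emph{main obstacle} is verifying that each $b_\eta h^*_\eta$ remains $H$-independent: this relies on the fact that $H$-independence of a tuple is captured by a partial $\mathcal{L}_H$-type $\pi(y, z)$ over $\emptyset$, a feature that follows from the axiomatizations of $T^*$ given in \cite{BV10, BK16}. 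Given this, for each finite $\pi_0 \subseteq \pi$, applying local basing with $\Delta \supseteq \pi_0$ yields $b_\eta h^*_\eta \models \pi_0$, and compactness then gives $b_\eta h^*_\eta \models \pi$, so that $b_\eta h^*_\eta$ is $H$-independent as required.
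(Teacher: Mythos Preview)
Your proposal is correct and follows essentially the approach the paper defers to (the paper simply writes ``Exactly the same proof as \cite[Lemma 4.4]{DK17} works''): enlarge each $a_\eta$ by a finite $h_\eta\subseteq H(\mathbb M)$ with $a_\eta\aclindep_{h_\eta}H(\mathbb M)$ (possible by finite character of the pregeometry), apply strong modeling, and check that both the ATP pattern and $H$-independence survive.

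One point deserves tightening. Your justification that $H$-independence is ``captured by a partial $\mathcal L_H$-type $\pi(y,z)$'' is vague, and it is not obvious that the set of $H$-independent tuples is genuinely type-definable rather than merely invariant. The cleanest route avoids this issue entirely: since $(a_\eta)$ is strongly indiscernible, all $a_\eta$ share the same $\mathcal L_H$-type over $\emptyset$; choose $h_\emptyset$ once and set $h_\eta:=\sigma_\eta(h_\emptyset)$ for some $\mathcal L_H$-automorphism $\sigma_\eta$ sending $a_\emptyset$ to $a_\eta$. Then all $a_\eta h_\eta$ have the \emph{same} complete $\mathcal L_H$-type over $\emptyset$. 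After modeling, for any finite $\Delta$ local basing gives $b_\eta h^*_\eta\equiv_\Delta a_{\eta'}h_{\eta'}$ for some $\eta'$; since all the latter share one complete type, letting $\Delta$ vary yields $\tp_{\mathcal L_H}(b_\eta h^*_\eta)=\tp_{\mathcal L_H}(a_\emptyset h_\emptyset)$. As $H$-independence is manifestly invariant under $\mathcal L_H$-automorphisms, each $b_\eta h^*_\eta$ is $H$-independent. This is exactly the mechanism in \cite[Lemma 4.4]{DK17}, and with this adjustment your argument is complete.
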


\begin{proof}
Exactly the same proof as \cite[Lemma 4.4]{DK17} works, so we omit it.
\end{proof}

\begin{Lemma}\label{lem:L_conj_H_ATP}
If there is some $\mathcal{L}_H$-formula $\varphi(x,y)$ such that $\varphi(x,y) \wedge H(x)$ witnesses ATP of $T^*$, then $T$ has ATP.
\end{Lemma}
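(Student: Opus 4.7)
The plan is to leverage Proposition \ref{prop:char_of_ATP} to witness ATP of $T$, via a carefully chosen $\mathcal{L}$-formula equivalent to $\varphi(x,y) \wedge H(x)$ on realizations that sit inside $H$.

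First, I would take a strongly indiscernible tree $(a_\eta)_{\eta \in 2^{\leq \omega}}$ witnessing ATP of the $\mathcal{L}_H$-formula $\varphi(x,y) \wedge H(x)$, using Remark \ref{rem:witness_of_ATP}. By Lemma \ref{lem:ATP_wit_H-indep}, up to adjoining dummy variables to $y$, I may assume each $a_\eta$ is $H$-independent, with the same tree-index structure; relabel and still call the tree $(a_\eta)$ and the formula $\varphi$. Then Fact \ref{fact:L_H_coincides_with_L_on_H} supplies an $\mathcal{L}$-formula $\psi(x,y)$ satisfying
$$
\varphi(x, a_\emptyset) \wedge H(x) \ \leftrightarrow \ \psi(x, a_\emptyset) \wedge H(x).
$$
Because the sentence $\forall x\bigl(H(x) \rightarrow (\varphi(x,y) \leftrightarrow \psi(x,y))\bigr)$ is an $\mathcal{L}_H$-formula in the free variables $y$, strong indiscernibility over $\emptyset$ in $T^*$ propagates the equivalence simultaneously to every $a_\eta$.

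Next, I would verify the two hypotheses of Proposition \ref{prop:char_of_ATP} for $\psi(x,y)$ and $(a_\eta)_{\eta \in 2^{\leq \omega}}$ in $T$; note that strong indiscernibility in $T^*$ automatically yields strong indiscernibility in $T$ since $\mathcal{L} \subseteq \mathcal{L}_H$. For condition (1), Remark \ref{rem:univ_antic_has_inf_sol} applied to $\varphi(x,y) \wedge H(x)$ gives infinitely many realizations of $\{\varphi(x, a_\eta) \wedge H(x) : \eta \in 2^\omega\}$ in $\mathbb M$, and each such realization satisfies $\psi(x, a_\eta)$ for every $\eta \in 2^\omega$ by the equivalence. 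For condition (2), fix $\eta \in 2^{<\omega}$. Inconsistency of $\{\varphi(x, a_\eta) \wedge H(x),\ \varphi(x, a_{\eta^\frown 0}) \wedge H(x)\}$ in $T^*$ shows that no element of $H(\mathbb M)$ satisfies $\psi(x, a_\eta) \wedge \psi(x, a_{\eta^\frown 0})$. Now if this $\mathcal{L}$-conjunction had infinitely many solutions in $\mathbb M$, it would extend to a non-algebraic $1$-type in $T$ over the finite, hence finite-dimensional, set $a_\eta a_{\eta^\frown 0}$; the dense/co-dense property of $T^*$ would then force a realization inside $H(\mathbb M)$, a contradiction. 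Therefore $\psi(x, a_\eta) \wedge \psi(x, a_{\eta^\frown 0})$ has only finitely many realizations in $\mathbb M$.

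Once both conditions are in place, Proposition \ref{prop:char_of_ATP} immediately delivers ATP of $T$. The conceptual heart of the argument is the second verification: one must convert the chainwise inconsistency information, which a priori lives only inside the ``dense subset'' $H$, into a genuine finiteness statement in the ambient model of $T$. This is the step that genuinely uses the dense/co-dense hypothesis on $T^*$; the rest is formal manipulation built on Fact \ref{fact:L_H_coincides_with_L_on_H}, Lemma \ref{lem:ATP_wit_H-indep}, and the antichain-versus-path criterion of Proposition \ref{prop:char_of_ATP}.
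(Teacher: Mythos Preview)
Your argument is essentially identical to the paper's: pass to a strongly indiscernible $H$-independent tree, replace $\varphi$ by an $\mathcal{L}$-formula $\psi$ via Fact~\ref{fact:L_H_coincides_with_L_on_H}, and feed $\psi$ into Proposition~\ref{prop:char_of_ATP} using density for condition~(2) and Remark~\ref{rem:univ_antic_has_inf_sol} for condition~(1). The one step you skipped is the reduction to $|x|=1$: the dense/co-dense axiom is stated only for $1$-types, so your density argument in condition~(2) (and your explicit invocation of a ``non-algebraic $1$-type'') requires $x$ to be a single variable, which the paper secures at the outset via Proposition~\ref{prop:ATP_inside_type} with $\rho(x)\equiv H(x)$.
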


\begin{proof}
We can assume that $x$ is a single variable by Proposition \ref{prop:ATP_inside_type} and letting $q(x) \equiv H(x)$.
By Remark \ref{rem:witness_of_ATP} and Lemma \ref{lem:ATP_wit_H-indep}, there is a strongly indiscernible tree $(a_{\eta})_{\eta \in 2^{\leq \omega}}$ witnessing ATP of $\varphi(x,y) \wedge H(x)$ where each $a_{\eta}$ is $H$-independent. By Fact \ref{fact:L_H_coincides_with_L_on_H}, there is an $\mathcal{L}$-formula $\psi(x,a_{\emptyset})$ agreeing with $\varphi(x,a_{\emptyset})$ on $H(\mathbb{M})$, thus $\psi(x,a_{\eta})$ agrees with $\varphi(x,a_{\eta})$ on $H(\mathbb{M})$ for any $\eta \in 2^{\leq \omega}$ by strong indiscernibility.

Note that $\psi(x,a_{\eta}) \wedge \psi(x,a_{\eta\frown 0})$ is algebraic for any $\eta \in 2^{< \omega}$: Otherwise, by the density, it has a realization in $H(\mathbb{M})$, which is impossible, because it agrees with $\varphi(x,a_{\eta}) \wedge \varphi(x,a_{\eta\frown 0})$ on $H(\mathbb{M})$, whose solution set is empty.
Also, the solution sets of $\{\varphi(x,a_{\eta}) : \eta \in 2^{\omega}\} \wedge H(x)$ and $\{\psi(x,a_{\eta}) : \eta \in 2^{\omega}\} \wedge H(x)$ are the same, so $\{\psi(x,a_{\eta}) : \eta \in 2^{\omega}\}$ has infinitely many realizations by Remark \ref{rem:univ_antic_has_inf_sol}.
Now by Proposition \ref{prop:char_of_ATP} with $\psi(x,y)$ and $(a_{\eta})_{\eta \in 2^{\leq \omega}}$, $T$ has ATP.
\end{proof}

We are ready to prove the preservation result of this subsection:

\begin{Theorem}
If $T^*$ has ATP, then so does $T$.
\end{Theorem}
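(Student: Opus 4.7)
The plan is to reduce to Lemma \ref{lem:L_conj_H_ATP} by showing that, under the assumption that $T^{*}$ has ATP, there is an $\mathcal{L}_{H}$-formula $\varphi(x;y)$ such that $\varphi(x;y)\wedge H(x)$ again witnesses ATP of $T^{*}$. Suppose $T^{*}$ has ATP. By Remark \ref{rem:witness_of_ATP}, Proposition \ref{prop:ATP_inside_type} (applied with trivial $\rho_{i}\equiv(x_{i}=x_{i})$ to reduce $x$ to a single variable), and Lemma \ref{lem:ATP_wit_H-indep}, we fix an $\mathcal{L}_{H}$-formula $\varphi(x;y)$ with $x$ a single variable, together with a strongly indiscernible tree $(a_{\eta})_{\eta\in 2^{\leq\omega}}$ of $H$-independent tuples witnessing ATP of $T^{*}$.

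The chain-inconsistency of $\varphi\wedge H$ is immediate. The crux is antichain-consistency: for each antichain $X\subseteq 2^{\leq\omega}$, exhibit $h\in H(\mathbb{M})$ realizing $\{\varphi(x;a_{\eta}):\eta\in X\}$. By Fact \ref{fact:L_H_coincides_with_L_on_H} applied to $\varphi$, using $H$-independence of the $a_{\eta}$'s together with strong indiscernibility to get a uniform choice, there is $\psi(x;y)\in\mathcal{L}$ with $\varphi(x;a_{\eta})\wedge H(x)\leftrightarrow \psi(x;a_{\eta})\wedge H(x)$ for every $\eta$. Hence an $H$-realization of $\{\psi(x;a_{\eta}):\eta\in X\}$ produces the desired $h$, and by the dense/co-dense property of $T^{*}$ it suffices to prove that the $\mathcal{L}$-partial type $\{\psi(x;a_{\eta}):\eta\in X\}$ is non-algebraic. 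We plan to extract this non-algebraicity from Remark \ref{rem:univ_antic_has_inf_sol} (giving infinitely many $\mathcal{L}_{H}$-realizations of the $\varphi$-antichain) combined with Fact \ref{fact:sym_diff_small} (giving another $\mathcal{L}$-formula $\psi'$ with $\varphi(x;a_{\eta})\Delta\psi'(x;a_{\eta})\subseteq \scl(a_{\eta})$): any realization of the $\varphi$-antichain lying outside $\scl(\bigcup_{\eta}a_{\eta})$ realizes $\psi'$-antichain too, and then a density/saturation argument transfers non-algebraicity between $\psi$ and $\psi'$ across $H(\mathbb{M})$.

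The main obstacle, paralleling the corresponding difficulty in \cite{DK17} for $\mathrm{NTP}_{1}$, arises in the degenerate configuration where \emph{every} realization of $\{\varphi(x;a_{\eta}):\eta\in X\}$ is already confined to $\scl\bigl(\bigcup_{\eta}a_{\eta}\bigr)$: since the symmetric differences $\varphi\Delta\psi'$ and $\varphi\Delta\psi$ are concentrated precisely inside $\scl$, the naive transfer fails. To bypass this, we plan to use compactness to write each such realization as $\mathcal{L}$-algebraic over the parameters together with a finite tuple from $H(\mathbb{M})$, apply the modeling property (Fact \ref{modeling property}) to uniformize this $H$-tuple across the tree, and then absorb it into an enlarged strongly indiscernible tree of parameters. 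The enlarged tree should already admit an antichain realization inside $H(\mathbb{M})$, thereby verifying that $\varphi\wedge H(x)$ witnesses ATP of $T^{*}$. An application of Lemma \ref{lem:L_conj_H_ATP} then concludes that $T$ has ATP. This degenerate-configuration step is the delicate part of the argument, and we expect most of the technical work to be concentrated there.
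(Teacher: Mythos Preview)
Your overarching goal---to show that $\varphi(x;y)\wedge H(x)$ witnesses ATP of $T^{*}$ for the \emph{same} $\varphi$ and the same tree---is false in general, and this is where the argument breaks. Consider the configuration in which no realization of $\{\varphi(x;a_{\eta}):\eta\in 2^{\omega}\}$ lies in $\scl(A)$ (this is exactly Case~1 of the paper's proof). Since $H(\mathbb{M})\subseteq\scl(A)$ always, the partial type $\{\varphi(x;a_{\eta})\wedge H(x):\eta\in 2^{\omega}\}$ is outright inconsistent here, so $\varphi\wedge H(x)$ cannot witness ATP via this tree. Your ``density/saturation transfer'' between $\psi$ and $\psi'$ cannot repair this: density only produces realizations inside $H(\mathbb{M})\subseteq\scl(A)$, precisely where the small symmetric difference $\varphi\Delta\psi'$ gives you no control. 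The paper sidesteps this entirely: in Case~1 it never tries to realize anything in $H$, but instead uses the $\mathcal{L}$-approximation $\psi$ from Fact~\ref{fact:sym_diff_small} together with codensity and Proposition~\ref{prop:char_of_ATP} to conclude that $T$ has ATP \emph{directly}, without invoking Lemma~\ref{lem:L_conj_H_ATP}.

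Your treatment of the degenerate case is also off target. When some realization $b$ of the $\varphi$-antichain lies in $\scl(A)$, the paper does not try to force a realization of $\varphi\wedge H(x)$; instead it performs a \emph{change of free variable}. Writing $b$ as satisfying an algebraic $\mathcal{L}$-formula $\theta(x,c,h)$ with $c$ from $A$ and $h\in H(\mathbb{M})$, one forms
\[
\mu(z;c,y)\equiv H(z)\wedge \exists^{\le k}x\,\theta(x,c,z)\wedge \exists x\bigl(\theta(x,c,z)\wedge\varphi(x,y)\bigr),
\]
so that the $H$-tuple $z$ becomes the new object variable. It is $\mu$ (with a suitable subtree indiscernible over $c$) that witnesses $(k{+}1)$-ATP; Proposition~\ref{prop:k-ATP_wit_ATP} upgrades this to ATP, and only then does Lemma~\ref{lem:L_conj_H_ATP} apply---to $\mu$, not to $\varphi$. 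Your plan to ``uniformize the $H$-tuple and absorb it into the tree'' points vaguely in this direction but misses the essential point that the variable must shift from $x$ to $z$; without that shift there is no reason the enlarged tree would admit an antichain realization inside $H(\mathbb{M})$.
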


\begin{proof}
The framework of the argument is the same as \cite[Theorem 4.8]{DK17}.
Assume that $T^*$ has ATP witnessed by an $L_H$-formula $\varphi(x;y)$  with a strongly indiscernible tree $(a_{\eta})_{\eta \in 2^{\leq \omega}}$. By Proposition \ref{prop:ATP_inside_type} and Lemma \ref{lem:ATP_wit_H-indep}, we may assume that $x$ is a single variable and each $a_{\eta}$ is $H$-independent. Put $A = \bigcup_{\eta \in 2^{\omega}}a_{\eta}$, which is a set of single elements.

\begin{case1}
No realization of $\{\varphi(x,a_{\eta}) : \eta \in 2^{\omega}\}$ is in $\scl(A)$.
\end{case1}

By Fact \ref{fact:sym_diff_small}, there is an $\mathcal{L}$-formula $\psi(x,y)$ such that for each $\eta \in 2^{\leq \omega}$, $\varphi(x,a_{\eta}) \Delta \psi(x,a_{\eta})$ defines an $a_{\eta}$-small set.
Then for any $\eta \in 2^{<\omega}$, $\psi(x,a_{\eta}) \wedge \psi(x,a_{\eta\frown 0})$ has finitely many realizations:
Otherwise, by the codensity, it has a realization $c$ in $\mathbb{M} \setminus \scl(a_{\eta}a_{\eta^{\frown} 0})$, but since $(\varphi(x,a_{\eta}) \wedge \varphi(x,a_{\eta\frown 0})) \Delta (\psi(x,a_{\eta}) \wedge \psi(x,a_{\eta^{\frown} 0}))$ is $a_{\eta}a_{\eta^{\frown} 0}$-small, $c$ must realize $\varphi(x,a_{\eta}) \wedge \varphi(x,a_{\eta\frown 0})$, which is impossible since $(a_{\eta})_{\eta \in 2^{\leq \omega}}$ witnesses ATP.

Also, every realization of $\{\varphi(x,a_{\eta}) : \eta \in 2^{\omega}\}$ is a realization of $\{\psi(x,a_{\eta}) : \eta \in 2^{\omega}\}$ because $\{\varphi(x,a_{\eta}) : \eta \in 2^{\omega}\} \Delta \{\psi(x,a_{\eta}) : \eta \in 2^{\omega}\}$ is $A$-small but there is no realization of $\{\varphi(x,a_{\eta}) : \eta \in 2^{\omega}\}$ in $\scl(A)$ by the assumption for this case;
in particular, $\{\psi(x,a_{\eta}) : \eta \in 2^{\omega}\}$ has infinitely many realizations by Remark \ref{rem:univ_antic_has_inf_sol}.

Now by Proposition \ref{prop:char_of_ATP} with $\psi(x,y)$ and $(a_{\eta})_{\eta \in 2^{\leq \omega}}$, $T$ has ATP.

\begin{case2}
There is some $b \in \scl(A)$ satisfying $\{\varphi(x,a_{\eta}) : \eta \in 2^{\omega}\}$.
\end{case2}

Say $b$ realizes an algebraic formula $\theta(x,c,h)$ where $c$ and $h$ are tuples of $A$ and $H(\mathbb{M})$ respectively, and $\theta(x, w, z) \in \mathcal{L}$.
Since $c$ is of finite length, there is $N < \omega$ such that the subtree $\{a_{0^{N \frown} 1^{\frown} \eta} : \eta \in 2^{\leq \omega}\}$ is strongly indiscernible over $c$.
Put $d_{\eta} = a_{0^{N \frown} 1^{\frown} \eta}$.
Note that $\varphi(x, y) \in \mathcal{L}_H$ together with the $c$-strongly indiscernible tree $(d_{\eta})_{\eta \in 2^{\leq \omega}}$ witness ATP of $T^*$.

Say $\theta(x,c,h)$ has $k < \omega$ realizations and put
\[\mu(z; c, y) \equiv H(z) \wedge \exists x^{\leq k}\theta(x, c, z) \wedge \exists x(\theta(x, c, z) \wedge \varphi(x, y)).\]
Then since $\{\mu(z; c, d_{\eta}) : \eta \in 2^{\omega}\}$ is realized by $h$ (with $b$ for witness of $x$), by strong indiscernibility over $c$ and universality of $2^{\omega}$, $\{\mu(z; c, d_{\eta}) : \eta \in X\}$ is consistent for any antichain $X \subseteq 2^{\leq \omega}$ $(*)$.

Also we have that for any $\eta \in 2^{< \omega}$, $\{\mu(z; c, d_{\eta^{\frown} 0^l}) : l < \omega\}$ is $(k+1)$-inconsistent $(**)$. Otherwise, since $\varphi(x, d_{\eta\frown 0^i}) \wedge \varphi(x, d_{\eta\frown 0^j})$ is inconsistent for all $i \ne j < \omega$, we obtain $(k + 1)$ realizations of $\theta(x, c, h^*)$ for some $h^*$ while $\models \exists x^{\leq k}\theta(x, c, h^*)$, a contradiction.

By $(*)$, $(**)$, and strong indiscernibility over $c$, $\mu(z; w, y)$ witnesses $(k + 1)$-ATP with $(c d_{\eta})_{\eta \in 2^{\leq \omega}}$.
Thus by Proposition \ref{prop:k-ATP_wit_ATP}, $\bigwedge_{i \leq d} \mu(z; w_i y_i)$ has ATP for some $d < \omega$, and hence we can say that a formula of the form $H(z) \wedge \bigwedge_{i \leq d}\mu(z; w_i y_i)$ has ATP.
Now by Lemma \ref{lem:L_conj_H_ATP}, $T$ has ATP.
\end{proof}

\subsection{Dense/co-dense expansion on vector spaces}\label{subsec:d/cod_vs}
As in \cite{BDV22}, we fix a field $\mathbb{F}$ with characteristic 0 and a subring $R$ of $\mathbb{F}$. Let $\CL_0=\{+,0,\{\lambda\}_{\lambda\in\mathbb{F}}\}$ be a language of vector spaces over $\mathbb{F}$, $\CL_{R\text{-mod}}=\{+,0,\{r\}_{r\in R}\}$ a language of $R$-module, and $\CL\supseteq\CL_0$. Let $T$ be an $\CL$-theory such that $M|_{\CL_0}$ is a vector space over $\mathbb{F}$ for all $M\models T$. For each $A\subseteq M\models T$, let $\text{span}_\mathbb{F}(A)$ be the subspace spanned by $A$ over $\mathbb{F}$. We assume that $T$ has quantifier elimination in $\CL$, eliminates the quantifier $\exists^\infty$, and $\dcl=\acl=\text{span}_\mathbb{F}$.

For each $\CL_{R\text{-mod}}$-formula $\theta(\bar{x})$, let $P_\theta(\bar{x})$ be a new predicate symbol. Put $\CL_G=\CL\cup\{G\}\cup\{P_\theta(\bar{x}):\theta(\bar{x})\text{ is an }\CL_{R\text{-mod}}\text{-formula}\}$, where $G$ is a new unary predicate symbol. Let $T_G$ be an $\CL_G$-theory extending $T$ with the following axioms:
\begin{itemize}
\item[(A)] $G$ is an $R$-submodule of the universe and for each $\CL_{R\text{-mod}}$-formula $\theta(\bar{x})$ and $\bar{a}$, $P_\theta(\bar{a})$ if and only if $\bar{a}\in G$ and $G\models_{R\text{-mod}} \theta(\bar{a})$.
\item[(B)] Denote $\hat{R}$ by the field of fractions of $R$. If $\lambda_1, \ldots, \lambda_n\in\mathbb{F}$ are $\hat{R}$-linearly independent, then for all $g_1,\ldots,g_n\in G$, \[\lambda_1 g_1+\cdots+\lambda_n g_n=0 \;\;\text{implies} \;\;\bigwedge_i g_i=0.\]
\item[(C)] (Density property) For each $\bar{a}$, $r$, and an $\CL$-formula $\varphi(x,\bar{y})$, if $r\in R\setminus\{0\}$ and there are infinitely many solutions of $\varphi(x,\bar{a})$, then $\varphi(x,\bar{a})\wedge rG(x)$ has a solution. {\it i.e.}, $rG$ is dense in the universe for all $r\in R\setminus\{0\}$.
\item[(D)] (Extension/co-density property)
For each $\CL$-formulas $\varphi(x,\bar{y})$, $\psi(x,\bar{y},\bar{z})$, and $n\geq 1$, \[\forall \bar{y}\Big(\exists^\infty x\varphi(x,\bar{y})\wedge \forall \bar{z}\exists^{\le n} x\psi(x,\bar{y},\bar{z}) \rightarrow \exists x\big(\varphi(x,\bar{y})\wedge \forall \bar{z}(G(\bar{z})\rightarrow\neg\psi(x,\bar{y},\bar{z})) \big)\Big). \]
\end{itemize}

In \cite{BDV22}, Berenstein, d'Elb{\'e}e, and Vassiliev proved that this expansion preserves stability, simplicity, NIP, NTP$_1$, NTP$_2$, and NSOP$_1$, {\it i.e.}, if $T$ has one of the above properies, then so does $T_G$. In this subsection we show that if $T$ is NATP, then $T_G$ is also NATP. To explain this, we briefly summarize some facts about $T_G$ which appear in \cite{BDV22}.

For each finite $\hat{R}$-independent tuple $\bar{\lambda}=(\lambda_1,\ldots,\lambda_n)$ in $\mathbb{F}$ and $i\leq n$, let $G_{\bar{\lambda}}$ be a unary predicate symbol and $f_{\bar{\lambda},i}$ be a unary function symbol, both not in $\mathcal{L}_G$. Define an expanded language
\[\CL_G^+:=\CL_G\cup \{ G_{\bar{\lambda}}:\bar{\lambda}\in\mathbb{F}\text{ is }\hat{R}\text{-independent}\}\cup\{ f_{\bar{\lambda},i}: \bar{\lambda}\in\mathbb{F}\text{ is }\hat{R}\text{-independent, }i \le n\}  \]
and an extension $T_G^+$ of $T_G$ in the language $\CL_G^+$ by adding the following axioms:
\[ \forall x\Big( G_{\bar{\lambda}}(x) \leftrightarrow \exists\bar{y}(G(\bar{y}) \wedge \sum_i \lambda_i y_i=x) \Big) \text{, and}\]
\[\forall x y \Big(x=f_{\bar{\lambda},i}(y) \leftrightarrow \big(y\in G_{\bar{\lambda}}\wedge\exists\bar{z}(G(\bar{z}) \wedge y=\bar{\lambda}\cdot\bar{z} \wedge z_i=x\big) \vee \big(\bar{y}\notin G_{\bar{\lambda}}\wedge x=0\big)\Big). \]
Note that every model $(V,G)$ of $T_G$ can be expanded to an $\CL_G^+$-structure which satisfies $T_G^+$.
Letting $\CL_0^+:=\CL_0\cup\{f_{\bar{\lambda},i}:\bar{\lambda}\in\mathbb{F}\text{ is }\hat{R}\text{-independent, } i \le n\}$, we denote $\langle B\rangle$ as the $\CL_0^+$-substructure spanned by $B$.

\begin{fact}\cite[Proposition 3.14]{BDV22}\label{fact:BDV22_prop_3.14}
Let $(V,G)$ be a model of $T_G^+$ and $B\subseteq V$. Assume that $B=\dcl(B)=\acl(B)=\text{span}_\mathbb{F}(B)=\langle B\rangle$ and $(V,G)$ is a $|B|$-saturated model. For $a\in G$ with $a\notin B$, let $q_1(x)$ be the set of formulas of the form $P_\theta(x,\bar{g})$ satisfied by $a$, where $\bar{g}\in G(B)$ and $\theta(x,\bar{y})$ is an $\CL_{R\text{-mod}}$-formula. Then for any non-algebraic $\CL$-type $p(x)$, $p(x)\cup q_1(x)$ is consistent and has infinitely many solutions.
\end{fact}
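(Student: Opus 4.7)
The plan is to use compactness together with the density axiom (C) and the extension axiom (D) of $T_G$. By compactness, it suffices to show that for any $\CL$-formula $\varphi(x,\bar b) \in p$ with parameters $\bar b \in B$ and infinitely many solutions (which we may assume since $p$ is non-algebraic), and any finite conjunction $\psi(x,\bar g) := \bigwedge_{i<n} P_{\theta_i}(x,\bar g_i)$ drawn from $q_1$, with $\bar g_i \in G(B)$, the formula $\varphi(x,\bar b) \wedge \psi(x,\bar g)$ has infinitely many realizations in $V$.

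The first step is to rewrite $\psi(x,\bar g)$, via axiom (A), as $G(x) \wedge \theta^*(x,\bar g)$ for a single $\CL_{R\text{-mod}}$-formula $\theta^*$, so that $\psi$ records a quantifier-free $R$-module condition on $x$ over $G(B)$ witnessed by $a$. Because $G$ is torsion-free over $\hat R$ by axiom (B), the quantifier-free $R$-module type of $a$ over $G(B)$ is pinned down by whether $a$ lies in $\text{span}_{\hat R}(G(B))$ and, if so, by explicit $R$-coefficient equations. Since $B = \langle B\rangle$ is closed under the $\CL_0^+$-operations $f_{\bar\lambda,i}$, the case $a \in \text{span}_{\hat R}(G(B))$ forces $a \in B$, so that $\theta^*(x,\bar g)$ already isolates $a$ among elements of $G$; in this algebraic case the claim reduces to the trivial observation that $a$ realizes its own type. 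Hence the substance of the statement lies in the generic case $a \notin \text{span}_{\hat R}(G(B))$.

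In the generic case, the formula $\theta^*(x,\bar g)$ is consistent with $x$ being $\hat R$-independent from $G(B)$ and lying in $G$. I apply the density axiom (C) with $r = 1$ to $\varphi(x,\bar b)$ to obtain infinitely many $x \in G$ satisfying $\varphi(x,\bar b)$. Then, using the extension axiom (D), I eliminate the finitely many algebraic conditions of the form $\exists \bar z\bigl(G(\bar z) \wedge x = \sum_i \lambda_i z_i\bigr)$ that would force $x \in \text{span}_{\hat R}(G(B))$, thereby guaranteeing $\hat R$-genericity of $x$ over $G(B)$. Combining (C) and (D) yields infinitely many $x \in G$ satisfying both $\varphi(x,\bar b)$ and $\theta^*(x,\bar g)$, which realize the given finite fragment of $p \cup q_1$.

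The main obstacle is the classification of the quantifier-free $R$-module type of $a$ over $G(B)$ and the careful coupling of (C) and (D) so that the excluded algebraic $\hat R$-linear relations do not shrink the infinite solution set produced by density. This is precisely where the hypothesis $B = \langle B\rangle$ and the $|B|$-saturation of $(V,G)$ are crucial, ensuring that both axioms can be invoked simultaneously on the finite parameter sets of interest.
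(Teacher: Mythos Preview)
This statement is recorded in the paper as a \emph{Fact} cited from \cite[Proposition 3.14]{BDV22}; the present paper does not supply its own proof, so there is nothing here to compare your argument against directly.

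That said, your handling of the ``algebraic'' case is wrong. You claim that when $a \in \operatorname{span}_{\hat R}(G(B))$ the formula $\theta^*(x,\bar g)$ isolates $a$ among elements of $G$, and that the conclusion then ``reduces to the trivial observation that $a$ realizes its own type.'' But the statement asserts that $p \cup q_1$ has infinitely many realizations for an \emph{arbitrary} non-algebraic $\CL$-type $p$, not for the $\CL$-type of $a$. If $q_1$ pinned down $x = a$, then $p \cup q_1$ would have at most one realization and would be inconsistent whenever $a \not\models p$; it certainly could not have infinitely many solutions. So either an implicit hypothesis such as $a \notin G(B)$ is in force (which is how the fact is effectively applied later in the section), or your case split does not cover the statement as written.

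In the generic case your sketch also has a gap. You treat $q_1$ as if it were determined by quantifier-free $R$-linear equations, but $\CL_{R\text{-mod}}$-formulas are arbitrary module formulas with quantifiers, so the reduction to ``explicit $R$-coefficient equations'' via axiom (B) is not justified without some structural input about the module theory of $G$ (e.g.\ pp-elimination). Moreover, your invocation of axiom (D) does not match what (D) actually says: (D) produces realizations of an $\CL$-formula $\varphi(x,\bar y)$ avoiding, for all $\bar z \in G$, an $\CL$-formula $\psi(x,\bar y,\bar z)$ with boundedly many solutions in $x$. It does not let you impose or avoid $R$-module conditions on $x$ inside $G$, which is what you need to force a generic element of $G$ to satisfy $q_1$. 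Density (C) and extension (D) are the right ingredients, but the bridge between ``$x \in G$ is generic over $G(B)$ in the module sense'' and ``$x$ satisfies $q_1$'' is exactly the content that is missing from your outline.
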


\begin{definition}
Let $(V,G)$ be a model of $T_G^+$. We say $A\subseteq V$ is $G$-independent if $A \aclindep[G(A)] G(V)$.
\end{definition}

\begin{fact}\cite[Lemma 3.20]{BDV22}\label{fact:BDV22_lem_3.20}
Let $(V,G)$ be a model of $T_G^+$. For $A\subseteq V$, $A$ is $G$-independent if and only if $\text{span}_\mathbb{F}(A)=\langle A\rangle$.
\end{fact}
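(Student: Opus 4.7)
The plan is to convert both sides of the equivalence into concrete linear-algebraic identities among $G$-elements and interpolate between them by applying axiom (B) after a suitable $\hat R$-basis and denominator-clearing reduction. The key recurring tool is the following change-of-basis trick: given a finite collection of $\mathbb F$-scalars, pass to an $\hat R$-basis $(\alpha_l)$ of their $\hat R$-span inside $\mathbb F$, clear denominators by a common $t \in R \setminus \{0\}$, and then invoke axiom (B) to split a single $\mathbb F$-linear identity among $G$-elements into one $R$-linear identity per coefficient $\alpha_l$.

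For the forward implication, since $\operatorname{span}_{\mathbb F}(A) \subseteq \langle A \rangle$ is automatic, one need only verify closure of $\operatorname{span}_{\mathbb F}(A)$ under each $f_{\bar\lambda,i}$. Given $y \in \operatorname{span}_{\mathbb F}(A) \cap G_{\bar\lambda}$ with $\bar\lambda$ being $\hat R$-independent, its $G$-decomposition $y = \sum_i \lambda_i z_i$ witnesses $y \in \operatorname{span}_{\mathbb F}(G(V))$, so $G$-independence produces $y = \sum_k \mu_k b_k$ with $b_k \in G(A)$. Applying the change-of-basis trick to $\{\lambda_i\} \cup \{\mu_k\}$ and comparing the two decompositions $\alpha_l$-componentwise yields $R$-linear identities $\sum_i s_{i,l} z_i = \sum_k r_{k,l} b_k$ inside $G$. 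The $\hat R$-independence of $(\lambda_i)$ forces $(s_{i,l})$ to have full row rank over $\hat R$, so Gaussian elimination expresses each $z_i$ as an $\hat R$-combination of the $b_k$'s; hence $f_{\bar\lambda,i}(y) = z_i \in \operatorname{span}_{\mathbb F}(G(A)) \subseteq \operatorname{span}_{\mathbb F}(A)$.

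For the backward implication, given $y \in \operatorname{span}_{\mathbb F}(A) \cap \operatorname{span}_{\mathbb F}(G(V))$, the same trick rewrites $ty = \sum_l \alpha_l h_l$ for some $t \in R \setminus \{0\}$, $\bar\alpha$ $\hat R$-independent, and $h_l \in G$; the hypothesis $\operatorname{span}_{\mathbb F}(A) = \langle A \rangle$ then places $h_l = f_{\bar\alpha,l}(ty) \in \operatorname{span}_{\mathbb F}(A) \cap G$, which reduces the problem to the core claim $G \cap \operatorname{span}_{\mathbb F}(A) \subseteq \operatorname{span}_{\mathbb F}(G(A))$. I would extend a maximal $\mathbb F$-independent subset $B \subseteq G(A)$ to an $\mathbb F$-basis $B \sqcup C$ of $\operatorname{span}_{\mathbb F}(A)$ with $C \subseteq A \setminus G$, so the claim becomes $\operatorname{span}_{\mathbb F}(C) \cap \operatorname{span}_{\mathbb F}(G(V)) = 0$; a hypothetical nonzero element of minimal $C$-support in this intersection would be iterated through the $f_{\bar\alpha,l}$-extraction, and the $B$-versus-$C$ decomposition of the resulting images, combined with axiom (B), would be used to produce a strictly smaller counterexample.

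The main obstacle is precisely this termination step in the backward direction: the $C$-support alone need not shrink under a single application of $f_{\bar\alpha,l}$, so the argument will require combining it with an auxiliary complexity (the number of nonvanishing $h_l$'s, or the $\hat R$-dimension of their $\operatorname{span}_{\hat R}$) and will draw implicitly on the density and codensity axioms of $T_G^+$ to rule out degenerate configurations of $C$ relative to $G(V)$. The forward direction, by contrast, is essentially bookkeeping inside a full-rank $\hat R$-linear system, so the real heart of the lemma lies in setting up a well-founded complexity measure for the backward iteration.
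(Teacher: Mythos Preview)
The paper does not prove this statement at all: it is recorded as a \emph{Fact} and attributed to \cite[Lemma 3.20]{BDV22}, with no argument given. So there is no ``paper's own proof'' to compare against; you have attempted to supply one where the authors simply quote the source.

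On the content of your proposal: the forward direction is sound. The change-of-basis trick together with axiom (B) does exactly what you say, and the full-row-rank argument over $\hat R$ correctly recovers each $z_i$ as an $\hat R$-combination of elements of $G(A)$, hence $f_{\bar\lambda,i}(y)\in\operatorname{span}_{\mathbb F}(A)$.

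The backward direction, however, has a genuine gap that your own closing paragraph anticipates but does not resolve. After reducing to the ``core claim'' $G\cap\operatorname{span}_{\mathbb F}(A)\subseteq\operatorname{span}_{\mathbb F}(G(A))$ and passing to the basis $B\sqcup C$, your proposed descent via iterated application of $f_{\bar\alpha,l}$ need not decrease the $C$-support: writing $tv=\sum_l\alpha_l h_l$ and decomposing $h_l=h_l^B+h_l^C$, one finds $tv=\sum_l\alpha_l h_l^C$, so the procedure can simply cycle. Your suggestion to invoke the density and codensity axioms (C) and (D) is off-target here: those axioms constrain how $G$ sits inside the ambient model $V$, not how an arbitrary subset $A$ relates to $G$, and the statement to be proved is purely about the algebraic closure operator and the functions $f_{\bar\lambda,i}$. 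The actual argument in \cite{BDV22} should be consulted; the termination you need will come from a more careful use of the hypothesis $\operatorname{span}_{\mathbb F}(A)=\langle A\rangle$ (which you have not yet exploited in the core claim itself), not from the density/codensity schema.
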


\begin{fact}\cite[Corollary 3.22]{BDV22}\label{fact:BDV22_cor_3.22}
Let $\bar{a}$ be a $G$-independent tuple, $\bar{h}:=G(\bar{a})$, which is the subtuple of $\bar a$ in $G$, and $\varphi(\bar{x},\bar{y})$ be an $\CL_G^+$-formula. Then there is an $\CL$-formula $\psi(\bar{x},\bar{y})$ and an $\CL_{R\text{-module}}$-formula $\theta(\bar{x},\bar{z})$ such that
\[\forall \bar{x}\Big(\varphi(\bar{x},\bar{a})\wedge G(\bar{x})\; \leftrightarrow\; \psi(\bar{x},\bar{a})\wedge P_\theta(\bar{x},\bar{h})\wedge G(\bar{x})\Big).\]
\end{fact}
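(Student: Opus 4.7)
The plan is to prove the statement by induction on the complexity of $\varphi$, reducing eventually to a structural analysis of atomic formulas on tuples $\bar x$ with $G(\bar x)$ when the parameter tuple $\bar a$ is $G$-independent. The crux is Fact \ref{fact:BDV22_lem_3.20}, which gives $\text{span}_{\mathbb{F}}(\bar a) = \langle \bar a \rangle$: no non-trivial $\mathbb{F}$-linear combination of the components of $\bar a$ outside $\bar h$ can land in $G$, so the $G$-content of $\text{span}_{\mathbb{F}}(\bar a \cup \bar x)$ is exactly the $R$-span of $\bar h \cup \bar x$ inside $G$.

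Using this, any $\CL_0^+$-term $t(\bar x, \bar a)$ evaluated on $\bar x \in G$ decomposes uniquely as $r(\bar x, \bar h) + \ell(\bar a)$, where $r$ is an $R$-linear combination lying in $G$ and $\ell$ is an $\mathbb{F}$-linear combination supported on the components of $\bar a$ that are not in $\bar h$; the uniqueness is exactly what axiom (B) provides, and the function symbols $f_{\bar\lambda,i}$ can be unwound by iterating this decomposition. Atomic formulas then split cleanly: pure $\CL$-atomic formulas accumulate into a single $\psi(\bar x, \bar a)$, while $G$-, $G_{\bar\lambda}$-, and $P_\theta$-predicates evaluated on such terms --- since the $\ell(\bar a)$ part must vanish whenever the whole term lies in $G$ --- are rewritten purely in terms of $\bar x$ and $\bar h$, and consolidated into a single $P_{\theta'}(\bar x, \bar h)$ using axiom (A).

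The main obstacle is handling quantifiers. The cleanest approach would be to first derive quantifier elimination for $T_G^+$ from the density and extension axioms (C) and (D) via a back-and-forth between small $\CL_0^+$-substructures; after that, only the atomic reduction described above remains. Alternatively, one may induct directly, using that an unrestricted existential quantifier is absorbed into $\psi$ (by quantifier elimination for $T$) while an existential restricted to $G$ yields an $R$-module quantifier absorbed into $\theta$, with axioms (C) and (D) ensuring that the two kinds of witnesses can always be produced compatibly. In either route, $G$-independence of $\bar a$ is what prevents hidden interaction between the $\mathbb{F}$-linear data recorded by $\psi$ and the $R$-module-theoretic data recorded by $P_{\theta'}$, which is the only reason such a clean separation is possible at all.
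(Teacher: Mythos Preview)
This statement is not proved in the present paper: it is quoted as a fact from \cite[Corollary~3.22]{BDV22}, so there is no proof here to compare against. Your sketch is a plausible outline of how such a result is obtained in \cite{BDV22}, and the overall architecture --- reduce to the quantifier-free case via a quantifier elimination for $T_G^+$, then use $G$-independence of $\bar a$ (equivalently $\text{span}_{\mathbb F}(\bar a)=\langle \bar a\rangle$ by Fact~\ref{fact:BDV22_lem_3.20}) together with axiom~(B) to separate the $\CL$-part from the $R$-module part --- is the right shape.

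That said, what you have written is a plan rather than a proof, and the quantifier step is where the real work lies. Your ``alternative'' inductive route is underspecified: an existential quantifier $\exists u$ in an $\CL_G^+$-formula need not be cleanly of one of the two kinds you describe, since the witness $u$ may interact simultaneously with the $\CL$-structure and with $G$ (e.g.\ via $G_{\bar\lambda}(u)$ or $f_{\bar\lambda,i}$-terms involving $u$), and splitting it requires exactly the back-and-forth/QE machinery you defer in the first route. So the honest version of your argument is: first establish QE for $T_G^+$ (this is the substantial theorem in \cite{BDV22}), and then the atomic analysis you describe finishes the job. Citing that QE result would make your sketch into an actual proof; as written, the inductive alternative has a gap at the quantifier step.
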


\begin{lemma}\label{lem:T_G ATP weaker version}
If $\varphi(x,y)\wedge G(x)$ witnesses ATP in $T_G$ for some $\CL_G$-formula $\varphi(x,y)$ with $|x|=1$, then $T$ has ATP. 
\begin{proof}
Choose any sufficiently large $\kappa$. There exists a strongly indiscernible tree $(a_\eta)_{\eta\in 2^{<\kappa}}$ that witnesses ATP with $\varphi(x,y)\wedge G(x)$. We may assume $a_\eta$ is $G$-independent for each $\eta\in 2^{<\kappa}$.

By Fact \ref{fact:BDV22_cor_3.22}, for each $\eta\in 2^{<\kappa}$, there are an $\CL$-formula $\psi_\eta(x,\bar{y})$ and an $\CL_{R\text{-mod}}$-formula $\theta_\eta(x,\bar{z})$ such that
\[\forall x\Big(\varphi(x,a_\eta)\wedge G(x)\; \leftrightarrow\; \psi_\eta(x,a_\eta)\wedge P_{\theta_\eta}(x,h_\eta)\wedge G(x)\Big),\]
where $h_\eta=G(a_\eta)$. Since $\kappa$ is sufficiently large, using Fact \ref{fact:monotone_universal_antichain}, we may assume that there are an $\CL$-formula $\psi(x,\bar{y})$ and an $\CL_{R\text{-mod}}$-formula $\theta(x,\bar{z})$ such that $\psi=\psi_\eta$ and $\theta=\theta_\eta$ for all $\eta\in 2^{<\kappa}$. It is clear that $\{\psi(x,a_\eta)\}_{\eta\in X}$ has infinitely many solutions for each antichain $X$ in $2^{<\kappa}$. Thus it suffices to show that $\{\psi(x,a_\eta),\psi(x,a_\nu)\}$ has only finitely many solutions for any $\eta\trn\nu$ by Proposition \ref{prop:char_of_ATP}.

Note that $\{P_\theta(x,h_\eta)\wedge G(x)\}_{\eta\in X}$ has infinitely many solutions for each antichain $X$ in $2^{<\kappa}$. Since $R$-modules are NATP, $\{P_\theta(x,h_\eta),P_\theta(x,h_\nu), G(x)\}$ must have infinitely many solutions for each $\eta\trn\nu$ by axiom (A) and Proposition \ref{prop:char_of_ATP}. Thus, if $\{\psi(x,a_\eta),\psi(x,a_\nu)\}$ has infinitely many solutions for some $\eta\trn\nu$, then by Fact \ref{fact:BDV22_prop_3.14}, $\{\varphi(x,a_\eta)\wedge G(x),\varphi(x,a_\nu)\wedge G(x)\}$ has a solution. But it is a contradiction since we assume $\varphi(x,\bar{y})\wedge G(x)$ witnesses ATP with $(a_\eta)_{\eta\in 2^{<\kappa}}$.
\end{proof}
\end{lemma}

\begin{prop}
If $T$ is NATP, then so is $T_G$.
\begin{proof}
Suppose $T$ is NATP. Toward a contradiction, suppose that $T_G$ has ATP. Then by \cite[Theorem 3.17]{AKL23}, there exist an $\CL_G$-formula $\varphi(x,y)$ with $|x|=1$ and a strongly indiscernible tree $A:=(a_\eta)_{\eta\in 2^{\le\kappa}}$ with $|a_\eta|=|y|$ for all $\eta\in 2^{<\kappa}$ that witness ATP in $T_G$. We may assume $\text{cf}(\kappa)=\kappa$, $\kappa$ is sufficiently large, and each $a_\eta$ is $G$-independent. Note that $\{\varphi(x,a_\eta)\}_{\eta\in X}$ has infinitely many solutions for each antichain $X\subseteq 2^{\le\kappa}$.
\begin{case1}
There exists $b\in\text{span}_\mathbb{F}(AG)$ such that $b\models\{\varphi(x,a_\eta)\}_{\eta\in 2^\kappa}$.

\noindent Then there exist $\bar{\lambda},\bar{\lambda}'\in\mathbb{F}$, $\eta_0,...,\eta_n\in 2^{\le\kappa}$, $g_0,...,g_m\in G$ such that $b=\bar{\lambda}\cdot\bar{a}_{\bar{\eta}}+\bar{\lambda'}\cdot\bar{g}$ where $\bar{a}_{\bar{\eta}}=(a_{\eta_0},...,a_{\eta_n})$, $\bar{g}=(g_0,...,g_m)$. Define a formula $\hat{\varphi}(x_0,...,x_m;y,\bar{a}_{\bar{\eta}})$ by
\[
\varphi(\bar{\lambda}\cdot\bar{a}_{\bar{\eta}}+\bar{\lambda'}\cdot\bar{x},y)\wedge\bigwedge_{i\le m} G(x_i).
\]
There exists $\nu\in 2^{<\kappa}$ such that $\nu\ntriangleleft\eta_i$ for all $i\le n$. Then $\hat{\varphi}(x_0,...,x_m;y,\bar{a}_{\bar{\eta}})$ witnesses ATP with $(a_\eta\in2^{<\kappa}:\nu\tri\eta)$ by strong indiscernibility of $A$. By repeating the argument in \cite[Theorem 3.17]{AKL23}, we may assume $m=0$. Thus $T$ has ATP by Lemma \ref{lem:T_G ATP weaker version}, it is a contradiction.
\end{case1}

\begin{case2}
There is no $b\in\text{span}_\mathbb{F}(AG)$ such that $b\models\{\varphi(x,a_\eta)\}_{\eta\in 2^\kappa}$.

\noindent Then we can find an $\CL$-formula $\psi(x,y)$ such that $\varphi(x,a_\emptyset)\triangle\psi(x,a_\emptyset)\subseteq \text{span}_\mathbb{F}(a_\emptyset G)$ by \cite[Lemma 3.9, Lemma 3.10, Theorem 3.16, and Proposition 5.8]{BDV22}. If $b\models\{\varphi(x,a_\eta)\}_{\eta\in 2^\kappa}$, then $b\models\{\psi(x,a_\eta)\}_{\eta\in 2^\kappa}$. Thus $\{\psi(x,a_\eta)\}_{\eta\in 2^\kappa}$ has infinitely many solutions. By indiscernibility of $(a_\eta)_{\eta\in2^{\le\kappa}}$, $\{\psi(x,a_\eta)\}_{\eta\in X}$ has infinitely many solutions for each antichain $X\subseteq 2^{\le\kappa}$.

So it is enough to show that $\{\psi(x,a_\eta),\psi(x,a_\nu)\}$ has only finitely many solutions for each $\eta\trn\nu$. If it has infinitely many solutions, then by axiom (D), there exists $d\models\{\psi(x,a_\eta),\psi(x,a_\nu)\}$ such that $d\notin\text{span}_\mathbb{F}(a_\eta a_\nu G)$. Since $\varphi(x,a_\eta)\triangle\psi(x,a_\eta)\subseteq \text{span}_\mathbb{F}(a_\eta G)$ and $\varphi(x,a_\nu)\triangle\psi(x,a_\nu)\subseteq \text{span}_\mathbb{F}(a_\nu G)$, we have $d\models\{\varphi(x,a_\eta),\varphi(x,a_\nu)\}$. It is a contradiction since we assume $\varphi$ witnesses ATP with $(a_\eta)_{\eta\in 2^{\le\kappa}}$.
\end{case2}
In any case we have a contradiction.
\end{proof}
\end{prop}

\section{Generic expansions}\label{sec:gen_pred}

In this section, we prove that certain generic expansions of NATP theories are still NATP.
The result is based on the work of Chatzidakis and Pillay in \cite{CP98}. In their paper, they showed that adding a generic predicate preserves simplicity.
Following their concept, Chernikov and Dobrowolski also proved that the same construction preserves NTP$_2$ \cite[Theorem 7.3]{Che14} and NTP$_1$ \cite[Proposition 4.2]{Dob18} respectively.

\begin{fact}\cite[Theorem 2.4, Corollary 2.6]{CP98}\label{fact:CP98 2.4}
Let $\CL$ be a language containing a unary predicate symbol $S$, and $T$ be an $\CL$-theory eliminating quantifiers and $\exists^\infty$ (see Definition \ref{def:geometric}(1)).
For a new unary predicate symbol $P$, let $\CL_P=\CL\cup\{P\}$ and $T_{0,S}=T\cup\{\forall x(P(x)\to S(x))\}$. 
\begin{itemize}
\item[(1)]
 $T_{0,S}$ has the model companion $T_{P,S}$ which is axiomatized by $T$ with all $\CL_P$-sentences of the form 
\begin{align*}
\forall\bar{z}\Bigg[\exists\bar{x}\Big(\varphi(\bar{x},\bar{z})\wedge(\bar{x}\cap\acl_T(\bar{z})=\emptyset)\wedge\bigwedge_{i=1}^n S(x_i)\wedge\!\bigwedge_{\!\!\!\!\! 1\le i<j\le n\!\!\!\!\!\!\!\!}x_i\neq x_j \Big)
\\ \to \exists\bar{x}\Big(\varphi(\bar{x},\bar{z})\wedge\bigwedge_{i\in I}(x_i\in P)\wedge\bigwedge_{i\notin I}(x_i\notin P)   \Big)    \Bigg],
\end{align*}
where $\varphi(\bar{x},\bar{z})$ is an $\CL$-formula, $\bar{x}=(x_1,\ldots,x_n)$, and $I\subseteq \{1,\ldots,n\}$.
\item[(2)] Let $M\models T_{P,S}$, $A\subseteq M$, and $a,b\in M$. Then $\tp_{T_{P,S}}(a/A)=\tp_{T_{P,S}}(b/A)$ if and only if there is an $\CL_P$-isomorphism $f$ between two $\CL_P$-substuctures $\acl_T(aA)$ and $\acl_T(bA)$ of $M$ where $f$ fixes $A$ and carries $a$ to $b$.
\item[(3)] $\acl_{T_{P,S}}(A)=\acl_T(A)$ for any $A \subseteq M \models T_{P, S}$.
\end{itemize}
\end{fact}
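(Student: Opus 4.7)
The plan is to prove the three parts in order, treating (1) as the core result and deriving (2), (3) with standard model-theoretic manipulations. The model-companion claim in (1) breaks into two tasks: (a) every model of $T_{0,S}$ embeds in a model satisfying the listed axiom schema, and (b) the axiom schema forces model-completeness, which will be obtained via (2).

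For (a), I would run the standard chain construction. Starting from $M_0 := M \models T_{0,S}$, enumerate all triples $(\varphi(\bar x,\bar z), \bar a, I)$ with $\bar a\in M_i$ for which the antecedent of the corresponding axiom holds, namely there exists $\bar x\in S$ with distinct coordinates outside $\acl_T(\bar a)$ satisfying $\varphi$. At each stage, pass to a sufficiently saturated $\CL$-elementary extension $M_{i+1}$ and extend the interpretation of $P$ on fresh elements of $S$ so that the required existential is witnessed with the prescribed $P$-pattern. This is possible because the elimination of $\exists^\infty$ in $T$ forces the antecedent's witnesses to form an infinite set modulo $\acl_T(\bar a)$, so fresh tuples with any prescribed $P$-pattern are available without disturbing previously-committed $P$-values. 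Iterating over $\omega$ stages and taking the union yields $M^\ast$ satisfying all instances of the schema.

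For (b), I would prove (2) directly from the axioms by a back-and-forth between $\acl_T$-closures. Given $M, N \models T_{P,S}$ containing $A$, tuples $a \in M$, $b \in N$, and an $\CL_P$-isomorphism $f : \acl_T(aA) \to \acl_T(bA)$ fixing $A$ with $f(a)=b$, I extend $f$ one element at a time. Given a new element $c$ in one side, its $\CL_P$-type over the current domain is determined by its quantifier-free $\CL$-type together with the $P$-pattern on the finite $\CL$-algebraic expansion it generates; the axiom schema (applied to the transcendental part of $c$ over the current algebraic closure) allows the realization of the matching $P$-pattern on the other side. The standard union-of-chain argument then produces equivalent types, giving (2). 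Model-completeness of $T_{P,S}$ (hence the model-companion claim) follows by the usual Robinson test: any $\CL_P$-embedding $M \hookrightarrow N$ between models extends to an elementary one since types are characterized by $\acl_T$-isomorphism types.

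Finally, (3) drops out of (2): if $c \in \acl_{T_{P,S}}(A) \setminus \acl_T(A)$, then $c$ has infinitely many $\CL$-conjugates $c_i$ over $\acl_T(A)$, and for each $i$ there is an $\CL$-isomorphism $\acl_T(Ac) \to \acl_T(Ac_i)$ fixing $A$; by the density axiom it can be arranged $\CL_P$-isomorphic, so by (2) all $c_i$ realize $\tp_{T_{P,S}}(c/A)$, contradicting algebraicity. The main obstacle is the back-and-forth of (2): one has to enumerate the new elements of $\acl_T(\bar a \bar c)$ over $\acl_T(\bar a)$ in a way respecting $\CL$-algebraicity and apply the density axiom inductively to install the required $P$-pattern without conflicting with values already fixed on $\acl_T(\bar a)$.
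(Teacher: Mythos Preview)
The paper does not supply a proof of this statement: it is recorded as a fact with citation to \cite[Theorem 2.4, Corollary 2.6]{CP98}, and no argument is given. The paper later proves analogues in its own two-theory setting (Lemma~\ref{lem:CP98 2.4} for part~(1), Lemma~\ref{lem:CP98 2.6(2)(3)} for parts~(2) and~(3)), but for those it either gives a different argument tailored to the $T_1\cup T_2$ framework or simply writes ``by similar arguments of \cite{CP98}, which are standard.'' So there is no in-paper proof to compare against; your sketch is being measured against the original Chatzidakis--Pillay argument.

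On that count your outline is the standard one and is correct in its architecture: chain construction for (1)(a), back-and-forth over $\acl_T$-closed sets for (2), Robinson's test to close (1), and (3) from (2). Two points could be tightened. First, in the back-and-forth for (2) you should work in $\omega$-saturated models (or pass to them), since otherwise the required realizations of the prescribed $P$-pattern need not exist; you implicitly use saturation but do not say so. Second, your derivation of (3) is a bit loose: the phrase ``by the density axiom it can be arranged $\CL_P$-isomorphic'' hides the real step. One must enumerate the finitely many new elements of $\acl_T(Ac)$ over $\acl_T(A)$ generated by a given $\CL$-formula, observe that the axiom schema lets you realize the same $\CL$-quantifier-free type together with the same $P$-pattern on that finite expansion infinitely often (avoiding previously found solutions), and then invoke (2). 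With those clarifications your sketch matches the source argument.
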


\begin{remark}
If we regard $S$ as the `whole universe' or axiomatize $T_{P,S}$ by $T_{0,S}$ with the same $\CL_P$-sentences, then the proof of Fact \ref{fact:CP98 2.4}(1) in \cite{CP98} works well without any problem.
But otherwise, $T_{P,S}$ cannot be the model companion of $T_{0,S}$ in general. It is not difficult to obtain a counter example (e.g., the set of natural numbers with predicate symbols) such that a model of $T_{P, S}$ cannot be extended to a model of $T_{0, S}$.
\end{remark}

Our construction in Subsection \ref{ssec:GE NATP} will be a generalization of $T_{P,S}$ in the case of $T\vdash\forall x(S(x)\leftrightarrow x=x)$. It can also be considered as a generalization of random amalgamations in \cite{Tsu01}.

\subsection{Preliminaries on Modular Pregeometry}\label{ssec:preli_modular_pregeo}

First we review some definitions and facts about modular pregeometry.

\begin{definition}[\cite{TZ}, Section C.1]\label{def:pregeometry}$ $
\begin{enumerate}
    \item A {\em pregeometry} $(X, \cl)$ is a set $X$ with a closure operator $\cl: \mathcal{P}(X) \rightarrow \mathcal{P}(X)$ such that for all $A \subseteq X$ and singletons $a, b \in X$,
    \begin{enumerate}
        \item (Reflexivity) $A \subseteq \cl(A)$;
        \item (Finite character) $\cl(A) = \bigcup_{A' \subseteq A, A':\text{ finite}} \cl(A')$;
        \item (Transitivity) $\cl(\cl(A)) = \cl(A)$;
        \item (Exchange) If $a \in \cl(Ab) \setminus \cl(A)$, then $b \in \cl(Aa)$.
    \end{enumerate}
    \item Let $(X, \cl)$ be a pregeometry and $A \subseteq X$.
    \begin{enumerate}
        \item $A$ is called {\em independent} if for all singleton $a \in A$, $a \notin \cl(A \setminus \{a\})$;
        \item $A_0 \subseteq A$ is called a {\em generating set} for $A$ if $A \subseteq \cl(A_0)$;
        \item $A_0$ is called a {\em basis} for $A$ if $A_0$ is an independent generating set for $A$.
    \end{enumerate}
\end{enumerate}
\end{definition}

\begin{definition}\label{def:dimension}
Let $(X, \cl)$ be a pregeometry and $A \subseteq X$.
It is well-known that all bases of $A$ have the same cardinality(\cite[Section C.1]{TZ}).
\begin{enumerate}
    \item The {\em dimension} of $A$, $\dim(A)$ is the cardinal of a basis for $A$.
    \item $A$ is called {\em closed} if $\cl(A) = A$.
    \item $(X, \cl)$ is called {\em modular} if for any closed finite dimensional sets $B, C$,
    $$\dim(B \cup C) = \dim(B) + \dim(C) - \dim(B \cap C).$$
\end{enumerate}
We say {\em $T$ is a (modular) pregeometry with $\acl$} or {\em $\acl$ defines a (modular) pregeometry in $T$} if ($\mathbb M, \acl)$ is a (modular) pregeometry where $\mathbb M$ is a monster model of $T$.
\end{definition}

\begin{remark}\label{rem:remark_for_modular_pregeometry}
Assume $T$ is a pregeomtry with $\acl$. Let $A, B$ be algebraically closed and $c$ be a singleton not in $\acl(AB)$.
Then for $D := A \cap B$, we have the following:
\begin{enumerate}
     \item $\acl(cD)\cap\acl(AB)=D$;
     \item $\acl(cA)\cap\acl(AB)=\acl(A)=A$;
     \item $\acl(cB)\cap\acl(AB)=\acl(B)=B$.
\end{enumerate}
Moreover, if $T$ is modular, then
\begin{itemize}
     \item[(4)] $\acl(Ac)\cap\acl(Bc)=\acl(cD)$.
\end{itemize}
\begin{proof}
(1)-(3) are easily obtained by the exchange property.
 
\noindent(4):
By finite character, we may assume that $\dim(A)$ and $\dim(B)$ are finite.
It suffices to show that $\dim(\acl(Ac) \cap \acl(Bc)) = \dim(\acl(cD))$, which is routine to prove using modularity.
\end{proof}

\end{remark}

\begin{remark}\label{rem:comment_on_gap_in_preservation_chernikov_dobrowolski}
As we mentioned in the beginning of this section, the preservations of NTP$_2$ and NTP$_1$ under adding a generic predicate for arbitrary geometric theories were proved by Chernikov for NTP$_2$ in \cite[Theorem 7.3]{Che14} and Dobrowolski for NTP$_1$ in \cite[Proposition 4.2]{Dob18}.

But there is a gap in their proofs, which can be resolved if the underlying theory is a modular pregeometry with $\acl$. In the proof of \cite[Proposition 4.2]{Dob18}, given a strongly indiscernible tree $(a_{\eta})_{\eta\in2^{<\omega}}$, the set $A:=\{a_{\eta}:\eta\in 2^{<\omega}\}$ is claimed to be algebraically independent and disjoint from $\acl(\emptyset)$. Similarly, in the proof of \cite[Theorem 7.3]{Che14}, given an array $(a_{i,j})_{i,j<\omega}$ with some indiscernibilities, for each $i<\omega$ and for each $j<k<\omega$, $\acl(a_{i,j})\cap \acl(a_{i,k})=\acl(\emptyset)$. These statements were used crucially, but they are not true in general unless $a_{\eta}$ and $a_{i,j}$ are singletons.

Consider the following example. Let $\mathbb M\models \ACF_0$ be a monster model. Choose two transcendental elements $x,y\in \mathbb M$ which are algebraically independent over $\emptyset$. For an arbitrary index set $I$, consider an indiscernible set $\{c_{i}:i\in I\}$ over $(x,y)$. For each $i\in I$, put $a_i:=(c_ix,c_iy)$. Then $\{a_i:i\in I\}$ is an indiscernible set over $\emptyset$ and for each $i\in I$, $a_i\notin \acl(\{a_j:j\neq i\})$. But for any $i\neq j\in I$, $\acl(a_i)\cap \acl(a_j)=\acl(xy^{-1})$.
\end{remark}

\subsection{Generic expansions in NATP}\label{ssec:GE NATP}

\begin{definition}\cite{SS12}\label{rem/def:largeness}
Let $T$ be an $\CL$-theory. For $\varphi(\bar{x},\bar{y})$ with $\bar{x}=(x_0, \ldots,x_{n-1})$ and a parameter $\bar{b}$ with $|\bar{b}|=|\bar{y}|$, we say  {\it $\varphi(\bar{x},\bar{b})$ is large in $T$} if there is $\bar{a}$ such that
\begin{center}
$\models \varphi(\bar{a},\bar{b})$, \;\; $\bar{a}\cap\acl_T(\bar{b})=\emptyset$,\;\; and $a_i\neq a_j$ for all $i<j<n$.
\end{center}
\end{definition}

\begin{rem/not}
Let $T$ be an $\CL$-theory. If $T$ eliminates $\exists^\infty$, then largeness can be expressed in a first-order language by \cite[Lemma 2.3]{CP98} (there, the theory is assumed to have quantifier elimination, but it is not necessary for the proof of this lemma). That is, $\{\bar{b}\in\M:\varphi(\bar{x},\bar{b})\text{ is large in }T\}$ is definable for each $\varphi(\bar{x},\bar{y}) \in \mathcal{L}$. So, for each $\CL$-formula $\varphi(\bar{x},\bar{y})$,  there is an $\CL$-formula $\lambda_{\bar{x}}^T\varphi(\bar{y})$ such that
\begin{center}
$\models\lambda_{\bar{x}}^T\varphi(\bar{b})$ if and only if $\varphi(\bar{x},\bar{b})$ is large in $T$.
\end{center}
\end{rem/not}

{\bf Throughout this subsection, unless we mention otherwise,
we fix languages $\CL_1$, $\CL_2$, and assume that:}
\begin{itemize}
\item[•] $\CL=\CL_1\cup\CL_2$ and $\CL_1\cap\CL_2=\emptyset$;
\item[•] $T_i'$ is an $\mathcal{L}_i$-theory for $i = 1, 2$;
\item[•] $T_i'$ has the model companion $T_i \supseteq T_i'$ and $T_i$ eliminates $\exists^\infty$ for each $i=1,2$;
\item[•] $T_1, T_1', T_2, T_2'$ have only infinite models.
\end{itemize}
\par Also, we let $\widetilde{T}$ be an $\CL$-theory axiomatized by
\begin{align*}
T_1\cup T_2 \cup 
\Bigg\{\forall\bar{y}\bigg[\lambda_{\bar{x}}^{T_1}\varphi_1(\bar{y})\wedge\lambda_{\bar{x}}^{T_2}\varphi_2(\bar{y})\to\exists\bar{x}\Big(\varphi_1(\bar{x},\bar{y})\wedge\varphi_2(\bar{x},\bar{y})\Big)\bigg] & \\
 : \varphi_1(\bar{x},\bar{y})\in\CL_1,  \varphi_2(\bar{x},\bar{y})\in\CL_2 & \Bigg\}.
\end{align*}

Originally, as in \cite{KTW21}, we wanted to generalize the above setting to the case when $\mathcal{L}_1 \cap \mathcal{L}_2$ contains not only the equality symbol and aimed to explicitly find the set of formulas that axiomatizes the model companion of $T_1 \cup T_2$.
But there were some subtle problems to complete our proof in the more generalized setting, so here we confine ourselves to the case $\mathcal{L}_1 \cap \mathcal{L}_2 = \emptyset$, and show that $\widetilde{T}$ is the model companion of $T_1 \cup T_2$ (Lemma \ref{lem:CP98 2.4}).
And then we will see that this construction preserves NATP (Theorem \ref{thm:generic_expansion_preserve_NATP}) if there are some additional conditions.


\begin{remark}\label{rem:separate L}
For each quantifier-free $\CL$-formula $\varphi(\bar{x})$, there are quantifier-free formulas $\varphi_1(\bar{x},\bar{z})\in\CL_1$ and $\varphi_2(\bar{x},\bar{w})\in\CL_2$ such that 
\[\emptyset\vdash\forall\bar{x}\Big(\varphi(\bar{x})\leftrightarrow\exists\bar{z}\bar{w}\big(\varphi_1(\bar{x},\bar{z})\wedge\varphi_2(\bar{x},\bar{w}) \big)\Big)
\]
where possibly $\bar{z} \cap \bar{w} \ne \emptyset$.
\begin{proof}
It is routine to prove it by induction on the number of function symbols in a quantifier-free formula.
In the inductive step, find a function symbol that is applied to only variables and constants, say $f(x, a)$, replace $f(x, a)$ by a variable and find an equivalent formula having an existential quantifier with fewer function symbols to apply inductive hypothesis.
\end{proof}
\end{remark}

The following statement is a generalization of Fact \ref{fact:CP98 2.4}(1) in our setting. The proof is obtained by modifying the proof of \cite[Lemma 8]{Tsu01}:

\begin{lemma}\label{lem:CP98 2.4}
$\widetilde{T}$ is the model companion of $T_1'\cup T_2'$.
It follows that $\widetilde{T}$ is the model companion of $T_1\cup T'_2$, $T'_1\cup T_2$, and $T_1\cup T_2$.
\begin{proof}
First, we show that for any model $M'$ of $T_1' \cup T_2'$, there is a model of $\widetilde{T}$ extending $M'$. Suppose $M'\models T_1'\cup T_2'$. Choose any cardinal $\kappa\ge|M'|$. Since $T_i$ is the model companion of $T_i'$, there is an $\CL_i$-structure $M'_i\models T_i$ such that $M' |_{\CL_i}\subseteq M'_i$ and $\kappa=|M'_i|$ for each $i=1,2$. Choose a bijection from $M_2'$ to $M_1'$ that fixes $M'$ pointwise. Using such a bijection, we can expand $M'_1$ to an $\CL$-structure $M^0$ so that $M^0\models T_1\cup T_2$ and $M'\subseteq_{\mathcal{L}} M^0$.

Now choose any $\varphi_1(\bar{x},\bar{y})\in\CL_1$, $\varphi_2(\bar{x},\bar{y})\in\CL_2$, and suppose that
\[M^0\models \lambda_{\bar{x}}^{T_1}\varphi_1(\bar{m})\wedge\lambda_{\bar{x}}^{T_2}\varphi_2(\bar{m})\]
 for some $\bar{m}\in M^0$. 
Then by largeness, for each $i = 1, 2$, there are $M_i\models T_i$ extending $M^0|_{\CL_i}$, $\bar{m}_i\in M_i\setminus M^0$
such that $M_i\models \varphi_i(\bar{m}_i,\bar{m})$.
We may assume that $|M_1|=|M_2|$.
Let $f$ be a bijection from $M_2$ to $M_1$ which fixes $M^0$ pointwise and sends $\bar{m}_2$ to $\bar{m}_1$.
Via this bijection, we can expand $M_1$ to an $\CL$-structure $M^{\varphi_1, \varphi_2, \bar{m}}$ so that
\begin{center}
$(M' \subseteq_{\mathcal{L}}) M^0 \subseteq_{\mathcal{L}} M^{\varphi_1, \varphi_2, \bar{m}}\models T_1\cup T_2$ and $M^{\varphi_1, \varphi_2, \bar{m}}\models \varphi_1(\bar{m}_1,\bar{m})\wedge\varphi_2(\bar{m}_1,\bar{m})$.
\end{center}

Since $T_1$ and $T_2$ are model complete, we can extend $M^{\varphi_1, \varphi_2, \bar{m}}$ further for another suitable triplet consists of $\varphi_1(\bar{x}, \bar{y}) \in \mathcal{L}_1$, $\varphi_2(\bar{x}, \bar{y}) \in \mathcal{L}_2$, $\bar{m} \in M^0$.
Letting $M^1$ the union of chain formed in this way to cover all appropriate $\mathcal{L}_1, \mathcal{L}_2$-formulas and tuples in $M^0$, and iterating the process starting from $M^1$ to get $M^2$ and then $M^3, \ldots$, we get $M' \subseteq_{\mathcal{L}} \bigcup_{i < \omega} M^i \models \widetilde T$.

Conversely, if $M$ is a model of $\widetilde{T}$, then $M$ itself is a model of $T_1'\cup T_2'$ since $T_1'\cup T_2'\subseteq \widetilde{T}$.

Finally, we show that $\widetilde{T}$ is model complete. Let $M, N\models \widetilde{T}$ and suppose $M\subseteq N$. Choose any quantifier-free $\varphi(\bar{x},\bar{y})\in\CL$ and assume that $N\models\varphi(\bar{n},\bar{m})$ for some $\bar{n}\in N\setminus M$ and $\bar{m}\in M$.
Using Remark \ref{rem:separate L} and possibly adding some dummy variables, we can find $\varphi_1(\bar{x},\bar{y},\bar{z},\bar{z}')\in\CL_1$ and $\varphi_2(\bar{x},\bar{y},\bar{z},\bar{z}')\in\CL_2$ such that
\[\vdash\forall\bar{x}\bar{y}\Big(\varphi(\bar{x},\bar{y})\leftrightarrow\exists\bar{z}\bar{z}'\big(\varphi_1(\bar{x},\bar{y},\bar{z},\bar{z}')\wedge\varphi_2(\bar{x},\bar{y},\bar{z},\bar{z}') \big)\Big)
\]
and $N\models \varphi_1(\bar{n},\bar{m},\bar{n}_1,\bar{m}_1)\wedge\varphi_2(\bar{n},\bar{m},\bar{n}_1,\bar{m}_1)$ for some $\bar{n}_1 \in N \setminus M$ and $\bar{m}_1 \in M$.
We may assume that $\bar{n}\bar{n}_1$ is distinct and get
\begin{center}
$N\models \lambda^{T_1}_{\bar{x}\bar{z}}\varphi_1(\bar{m},\bar{m}_1) \wedge \lambda^{T_2}_{\bar{x}\bar{z}} \varphi_2(\bar{m}, \bar{m}_1)$.
\end{center}
Since $T_i$ is model complete for each $i$, we have $M\models\lambda^{T_1}_{\bar{x}\bar{z}}\varphi_1(\bar{m},\bar{m}_1) \wedge \lambda^{T_2}_{\bar{x}\bar{z}}\varphi_2(\bar{m},\bar{m}_1)$. Then, since $M$ is a model of $\widetilde{T}$, there are $\bar{m}', \bar{m}''\in M$ such that $M\models\varphi_1(\bar{m}',\bar{m},\bar{m}'',\bar{m}_1)\wedge\varphi_2(\bar{m}',\bar{m},\bar{m}'',\bar{m}_1)$.
Thus $M\models\varphi(\bar{m}',\bar{m})$, and hence $\widetilde{T}$ is model complete. 

Consequently, $\widetilde{T}$ is the model companion of $T_1'\cup T_2'$. And in particular, it is also the model companion of $T_1\cup T'_2$, $T'_1\cup T_2$, $T_1\cup T_2$ because $T_1\cup T'_2, T'_1\cup T_2, T_1\cup T_2$ are subsets of $\widetilde{T}$ containing $T_1'\cup T_2'$.
\end{proof}
\end{lemma}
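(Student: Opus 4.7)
The plan is to verify the two defining properties of a model companion: (i) every model of $T_1' \cup T_2'$ embeds into a model of $\widetilde{T}$, and (ii) $\widetilde{T}$ is model complete. The ``in particular'' clause then follows immediately because $T_1 \cup T_2'$, $T_1' \cup T_2$ and $T_1 \cup T_2$ all sit between $T_1' \cup T_2'$ and $\widetilde{T}$: mutual model-consistency with $\widetilde{T}$ is inherited from the extremes, and model completeness is a property of $\widetilde{T}$ alone.

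For step (i), given $M' \models T_1' \cup T_2'$ I would first use the model companionship of $T_i$ over $T_i'$ to extend each reduct $M'|_{\mathcal{L}_i}$ to some $M_i' \models T_i$ of the same (sufficiently large) cardinality, then glue these two $\mathcal{L}_i$-structures onto a common universe via a bijection $M_2' \to M_1'$ fixing $M'$ pointwise. The result is an $\mathcal{L}$-structure $M^0 \supseteq M'$ with $M^0 \models T_1 \cup T_2$. To force the remaining axioms of $\widetilde{T}$, I would iterate a one-step procedure: for each triple $(\varphi_1, \varphi_2, \bar{m})$ over the current model satisfying both largeness hypotheses, use largeness in $T_i$ together with model completeness of $T_i$ to find an extension of the $\mathcal{L}_i$-reduct containing a fresh witness tuple $\bar{m}_i$ for $\varphi_i(\bar{x}, \bar{m})$ that lies outside $\acl_{T_i}(\bar{m})$, then merge the two extensions by a bijection identifying $\bar{m}_2$ with $\bar{m}_1$ and fixing the previous model. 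Taking the union of a chain of such extensions of length $\omega$, iterated once more to handle parameters in the union, yields a model of $\widetilde{T}$ extending $M'$.

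For step (ii), I would apply the Robinson test: it suffices to show that for $M \subseteq N$ both models of $\widetilde{T}$, every existential $\mathcal{L}$-formula with parameters in $M$ satisfied in $N$ is already satisfied in $M$. Fix $\bar{m} \in M$ and a quantifier-free $\varphi(\bar{x}, \bar{y}) \in \mathcal{L}$ with $N \models \exists \bar{x}\, \varphi(\bar{x}, \bar{m})$. By Remark \ref{rem:separate L} (and after introducing dummy variables) I may write $\varphi$ as a projection of $\varphi_1 \wedge \varphi_2$ with $\varphi_i \in \mathcal{L}_i$ quantifier-free. From a suitable witness in $N$ I would read off that the largeness formulas $\lambda^{T_i}_{\bar{x}\bar{z}}\varphi_i(\bar{m}, \bar{m}_1)$ hold in $N$; model completeness of each $T_i$ transfers these largeness statements down to $M$, and the defining axioms of $\widetilde{T}$ then produce the required witness inside $M$.

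The delicate point, which I expect to be the main obstacle, is converting a bare existential witness in $N$ into a genuine certificate of largeness for each $\varphi_i$, so that the coordinates of the witness are pairwise distinct and lie outside $\acl_{T_i}$ of the parameters. Handling this will rely simultaneously on the infinite-model assumption for each $T_i$, on the first-order definability of largeness guaranteed by elimination of $\exists^\infty$, and on the freedom to augment the tuple with dummy variables in Remark \ref{rem:separate L}.
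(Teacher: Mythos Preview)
Your plan matches the paper's proof essentially line for line: the same bijection-gluing to build $M^0\models T_1\cup T_2$, the same iterated chain to reach a model of $\widetilde{T}$, and the same Robinson-test argument for model completeness via Remark~\ref{rem:separate L} plus transfer of the largeness formulas down to $M$ by model completeness of each $T_i$. For your flagged ``delicate point'', the paper's resolution is simply to sort the witness tuple in $N$ into an $M$-part and an $(N\setminus M)$-part: since $M|_{\mathcal{L}_i}\preceq N|_{\mathcal{L}_i}$ by model completeness of $T_i$, one has $\acl_{T_i}(\bar m\bar m_1)\subseteq M$, so the $(N\setminus M)$-coordinates automatically avoid the relevant algebraic closure, and distinctness is arranged by the usual variable-identification trick.
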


\begin{example}\label{example:generic predicate}
Each example below satisfies all the assumptions of Lemma \ref{lem:CP98 2.4}, hence there is the model companion $\widetilde T$ of theories indicated in Lemma \ref{lem:CP98 2.4}.
\begin{itemize}
    \item[(1)] Let $\mathcal{L}_1$, $T_1'$ and $T_1$ be as in the assumption of this subsection. Let $\CL_2=\{P\}$ where $P$ is an $n$-ary predicate symbol not in $\CL_1$ and $T_2'$ be an (incomplete) $\CL_2$-theory which is the theory of infinite sets.
    Note that the model companion $T_2$ of $T_2'$ exists (can be formed as the theory of a Fra\"{i}ss\'{e} limit) and $T_2' \subseteq T_2$.
    Then $\widetilde T$ is the $n$-ary predicate version of Fact \ref{fact:CP98 2.4}.
    \item[(2)] Set the same as (1), but let $\CL_2=\{R\}$ where $R$ is a binary relation symbol and $T_2'$ say that $R$ is irreflexive and symmetric.
    Then the model companion $T_2$ is the theory of random graph and contains $T_2'$.
    \item[(3)] This time, let $\CL_2=\{<\}$ where $<$ is a binary relation symbol and $T_2'$ say that $<$ is a linear order.
    Then the model companion $T_2$ is the theory of dense linear ordering without endpoints and contains $T_2'$.
\end{itemize}
\end{example}

From now on, we assume the following four additional conditions:

\begin{enumerate}
    \item[\circled{1}] $T_1$ eliminates quantifiers.
    \item[\circled{2}] Let $M, N \models \widetilde T$ and $E$ be an $\mathcal{L}_1$-model of $T_1$ in which both of $M|_{\mathcal{L}_1}$ and $N|_{\mathcal{L}_1}$ can be elementarily embedded, say via the maps $f$ and $g$.
    If there are substructures $M_0 \subseteq M$ and $N_0 \subseteq N$ such that $M_0$ is $\mathcal{L}$-isomorphic to $N_0$ and $f(M_0) = g(N_0)$,
    then there is an expansion $E'$ of $E$ such that $E' \models T_1 \cup T_2'$ and $f, g$ are $\mathcal{L}$-embeddings into $E'$.
\end{enumerate}
    
Let $\mathbb M_2$ be a monster model of $T_2$.

\begin{enumerate}
    \item[\circled{3}] $\acl_{T_2}$ is trivial; {\it i.e.}, $\acl_{T_2}(X)=X$ for all $X \subseteq \mathbb M_2$ and hence $\mathcal{L}_2$ is (essentially) a relational language.
    \item[\circled{4}] For all small substructures $A\subseteq B\subseteq \M_2$ satisfying $\acl_{T_2}(A)=A$ and $\acl_{T_2}(B)=B$ (which is trivial if \circled{3} holds), if $C\models T_2'$ is an $\CL_2$-structure extending $A$, then there is an $\CL_2$-embedding $\iota:C\to\M_2$ such that $\iota$ fixes $A$ pointwise and $\iota[C]\cap B=A$.
\end{enumerate}

\noindent The following two lemmas correspond to Fact \ref{fact:CP98 2.4}(2) and (3) respectively.
Note that because of \circled{3}, $\acl_{T_1}(A)$ is naturally an $\mathcal{L}$-substructure for any $A \subseteq M \models \widetilde T$.

\begin{lemma}\label{lem:CP98 2.6(2)(3)}$ $
\begin{enumerate}
    \item Let $M\models \widetilde{T}$, $A\subseteq M$, and $a,b\in M$. Then $\tp_{\widetilde{T}}(a/A)=\tp_{\widetilde{T}}(b/A)$ if and only if there is an $\CL$-isomorphism between two $\CL$-substructures $\acl_{T_1}(aA)$ and $\acl_{T_1}(bA)$ of $M$, which fixes $A$ pointwise and sends $a$ to $b$.
    \item $\acl_{T_1}(A)=\acl_{\widetilde{T}}(A)$ for any $A \subseteq M \models \widetilde T$.
\end{enumerate}
\begin{proof}
By similar arguments of \cite[Corollary 2.6 (2)]{CP98} and \cite[Corollay 2.6 (3)]{CP98}, which are standard.
\end{proof}
\end{lemma}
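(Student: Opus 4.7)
The plan is to treat (1)($\Rightarrow$) as essentially immediate from the fact that $\CL_1 \subseteq \CL$: any $\CL$-automorphism of the monster model of $\wt$ fixing $A$ and sending $a$ to $b$ is in particular an $\CL_1$-automorphism, so it carries $\acl_{T_1}(aA)$ onto $\acl_{T_1}(bA)$ as an $\CL$-isomorphism. For the nontrivial direction (1)($\Leftarrow$), I would combine the model completeness of $\wt$ from Lemma \ref{lem:CP98 2.4} with a back-and-forth/amalgamation argument built on the four standing hypotheses \circled{1}--\circled{4}, and then deduce (2) from (1) by a realization-counting argument.

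To carry out (1)($\Leftarrow$), start with the given $\CL$-isomorphism $\phi : \acl_{T_1}(aA) \to \acl_{T_1}(bA)$ that fixes $A$ and sends $a \mapsto b$. By model completeness of $\wt$, it suffices to show that every existential $\CL$-formula $\exists \bar{x}\,\chi(a,\bar{c},\bar{x})$ realized by $a$ over parameters $\bar{c} \in A$ is also realized by $b$. Given witnesses $\bar{d}$ on the $a$-side, I would form the $\CL$-substructure $B_a := \acl_{T_1}(aA\bar{d})$ of $M$ and transport it along $\phi$ to construct an $\CL$-structure extending $\acl_{T_1}(bA)$ inside a larger model of $\wt$. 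First, using \circled{1} (quantifier elimination for $T_1$), I would form a common $\CL_1$-model $E \models T_1$ into which $M|_{\CL_1}$ and the transported copy of $B_a$ both elementarily embed over $\acl_{T_1}(bA)$. Next, invoking \circled{2}, this $\CL_1$-amalgam lifts to an expansion $E' \models T_1 \cup T_2'$ into which the relevant substructures embed as $\CL$-substructures. Finally, \circled{4} (whose use is permitted by the triviality of $\acl_{T_2}$ from \circled{3}) allows me to freely place the $\CL_2$-relations on any newly introduced elements so as to avoid unwanted collisions with the existing $\CL_2$-structure of $M$. By Lemma \ref{lem:CP98 2.4}, $E'$ extends to a model of $\wt$, and model completeness makes the embedding of $M$ elementary, producing the desired tuple $\bar{d}'$ witnessing $\chi(b, \bar{c}, \bar{x})$.

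For (2), the inclusion $\acl_{T_1}(A) \subseteq \acl_{\wt}(A)$ is automatic, since any $\CL_1$-formula witnessing $T_1$-algebraicity is also an $\CL$-formula witnessing $\wt$-algebraicity. For the converse, suppose $b \notin \acl_{T_1}(A)$. Because $T_1$ eliminates $\exists^\infty$, there exist infinitely many distinct realizations $b_1, b_2, \ldots$ of $\tp_{T_1}(b/A)$ in a sufficiently saturated elementary $\wt$-extension of $M$, and for each $i$ there is a canonical $\CL_1$-isomorphism $\psi_i : \acl_{T_1}(bA) \to \acl_{T_1}(b_i A)$ fixing $A$ with $\psi_i(b) = b_i$. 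Using \circled{4} together with the essentially relational character of $\CL_2$ from \circled{3}, I would replace each $b_i$ by a realization $b_i'$ of the same $T_1$-type chosen so that the $\CL_2$-atomic type of $b_i'$ over $A$ matches that of $b$ over $A$; after this adjustment, $\psi_i$ becomes an $\CL$-isomorphism of $\acl_{T_1}(bA)$ with $\acl_{T_1}(b_i' A)$ over $A$. Part (1) then yields $b \equiv_{\wt, A} b_i'$, giving infinitely many realizations of $\tp_{\wt}(b/A)$ and hence $b \notin \acl_{\wt}(A)$. The main obstacle is the amalgamation step in (1)($\Leftarrow$), where care is needed to simultaneously respect the $\CL_1$-algebraic closures of the two pieces being glued while exploiting \circled{4} to prescribe the $\CL_2$-structure on new elements, so that the amalgam genuinely satisfies $T_1 \cup T_2'$ and thus embeds into a model of $\wt$.
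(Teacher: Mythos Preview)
The paper gives no proof beyond the citation to \cite{CP98}, so your proposal is being compared against the standard Chatzidakis--Pillay argument rather than anything specific in the paper. Your overall plan is the right one and matches that standard route: model completeness of $\widetilde T$ plus an amalgamation for (1), and a conjugate-counting argument built on (1) for (2).

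Two points deserve tightening. In (1)($\Leftarrow$), the clean invocation of \circled{2} takes $M=N$ (the same model of $\widetilde T$) with $M_0=\acl_{T_1}(aA)$, $N_0=\acl_{T_1}(bA)$, and the given $\CL$-isomorphism $\phi$; by \circled{1} (QE for $T_1$) one finds $\CL_1$-elementary embeddings $f,g:M|_{\CL_1}\to E\models T_1$ with $f\!\upharpoonright_{M_0}=g\circ\phi$, and then \circled{2} expands $E$ to $E'\models T_1\cup T_2'$ into which both copies of $M$ $\CL$-embed. Passing to a $\widetilde T$-model above $E'$ and using model completeness gives $\tp_{\widetilde T}(a/A)=\tp_{\widetilde T}(b/A)$. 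Your appeal to \circled{4} in this step is superfluous: \circled{2} already handles the $\CL_2$-structure on the amalgam.

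In (2), your use of \circled{4} is not quite right. Condition \circled{4} produces $\CL_2$-embeddings disjoint from a prescribed set, but it says nothing about the $T_1$-type of the image; so you cannot use it alone to manufacture $b_i'$ with both the correct $T_1$-type over $A$ and the correct $\CL_2$-atomic diagram of $\acl_{T_1}(bA)$. The standard fix is either to reuse the amalgamation from (1) (apply \circled{2} with $M_0=N_0=\acl_{T_1}(bA)$ and embeddings $f,g$ chosen, via \circled{1} and $b\notin\acl_{T_1}(A)$, so that $f(b)\neq g(b)$, producing a distinct $\widetilde T$-conjugate in an elementary extension; then iterate), or to invoke the largeness axioms of $\widetilde T$ directly to realize $\qftp_{\CL}(\acl_{T_1}(bA)/\acl_{T_1}(A))$ off any prescribed finite set and conclude by compactness. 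Either route, together with (1), yields infinitely many realizations of $\tp_{\widetilde T}(b/A)$, hence $b\notin\acl_{\widetilde T}(A)$.
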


Before proving the preservation of NATP under generic expansions, we need the following general fact on NATP theories.

\begin{remark}\label{rem:key_remark_generic_const}
Suppose that $T$ is a complete theory having NATP.
Let $\lambda, \kappa, \kappa'$ be cardinals where $\kappa$ is regular and $2^{|T|+\lambda}<\kappa<\kappa'$. Let $(a_{\eta})_{\eta\in 2^{<\kappa'}}$ be a strongly indiscernible tree over $\emptyset$ and $(\bar{b}_i)_{i\in \omega}$ be an indiscernible sequence over $A:=\{a_{\eta}:\eta\in 2^{<\kappa'}\}$ with $\bar{b}_i=(b_{i,j})_{j<\lambda}$ satisfying $b_{i,j}\neq b_{i',j}$ for $i\neq i'\in \omega$ and $j<\lambda$. Then by Remark \ref{rem:arbitrary_rho_in_S}, for $B=(\bar{b}_i)_{i\in\omega}$, there is a universal anticahin $S\subseteq 2^{\kappa}$ such that $a_{\eta}\equiv_{B}a_{\eta'}$ for all $\eta,\eta'\in S$. Take arbitrary $\rho\in S$ and for each $i\in\omega$, let $\bar{x}_i=(x_{i,j})_{j<\lambda}$ be a sequence of variables with $|x_{i,j}|=|b_{i,j}|$ for all $i$ and $j$. Put $p(\bar{x}_0,a_{\rho}):=\tp(\bar{b}_0,a_{\rho})$ and for each $n\in \omega$,
\[p_n(\bar{x}_0,\ldots,\bar{x}_{n}):=\bigcup_{i\le n}p(\bar{x}_i,a_{\rho})\cup\{x_{i,j}\neq x_{i',j}:i\neq i'\le n,j<\lambda\},\]
\noindent which is consistent by $\bar{b}_0,\ldots,\bar{b}_n$.

We claim that for each $n\ge 0$, $p_n(\bar{x}_0,\ldots,\bar{x}_n,a_{\rho})\cup p_n(\bar{x}_0,\ldots,\bar{x}_n,a_{\rho^{\frown}0})$ is consistent.
Then it follows that the type $$p(\bar{x}_0,a_{\rho})\cup p(\bar{x}_0,a_{\rho^{\frown}0})$$ has infinitely many solutions whose all components are distinct.  
\end{remark}
\begin{proof}
Suppose not. By compactness, there is a formula $\psi\in p_n(\bar{x}_0,\ldots,\bar{x}_n,y)$ such that
$$\psi(\bar{x}_0,\ldots,\bar{x}_n,a_{\rho})\wedge\psi(\bar{x}_0,\ldots,\bar{x}_n,a_{\rho^{\frown}0}) \wedge \bigwedge_{i < i' \le n, j < m} x_{i, j} \ne x_{i', j}$$
is inconsistent where $|\bar{x}_i| = m$ for each $i \le n$.
Then by strong indiscernibility, for any $\eta\trianglerighteq \nu\in2^{<\kappa'}$,
$$\psi(\bar{x}_0,\ldots,\bar{x}_n,a_{\eta})\wedge\psi(\bar{x}_0,\ldots,\bar{x}_n,a_{\nu})\wedge \bigwedge_{i < i' \le n, j < m} x_{i, j} \ne x_{i', j}$$
is inconsistent.

On the other hand, since $\bar{b}_0,\ldots,\bar{b}_n\models \psi(\bar{x}_0,\ldots,\bar{x}_n,a_{\rho})$, by the choice of $S$, $$\bar{b}_0,\ldots,\bar{b}_n\models \psi(\bar{x}_0,\ldots,\bar{x}_n,a_{\eta})$$ for all $\eta\in S$. Since $S$ is a universal antichain, for any antichain $X$ in $2^{<\kappa'}$, $$\{\psi(\bar{x}_0,\ldots,\bar{x}_n,a_{\eta}):\eta\in X\}$$ is consistent. Therefore, $\psi(\bar{x}_0,\ldots,\bar{x}_n,y)$ witnesses ATP with $(a_{\eta})_{\eta\in 2^{<\kappa'}}$, which contradicts the assumption that $T$ has NATP. 
\end{proof}

\begin{lemma}\label{lem:coheir_witness_ATP}
In this lemma, we assume only part of the assumptions on the languages and theories in this subsection:
\begin{itemize}
    \item[•] $\CL=\CL_1\cup\CL_2$,
    \item[•] $T_i$ is an $\mathcal{L}_i$-theory and has only infinite models for $i = 1, 2$,
    \item[•] $\widetilde T$ is an $\mathcal{L}$-theory containing $T_1\cup T_2$.
\end{itemize}
Assume and denote $\acl_{T_1}=\acl_{\widetilde T}=:\acl$. Suppose $T_1$ and $T_2$ have NATP and $\widetilde T$ has ATP. If either $\acl$ defines a modular pregeometry in $T_1$ or $T_1$ is strongly minimal, then we have
\begin{enumerate}
    \item sufficiently large cardinals $\kappa, \kappa'$ satisfying $2^{|T|} < \kappa < \kappa'$ where $\kappa$ is regular;
    \item a formula $\varphi(x,y) \in \mathcal{L}$ with $|x| = 1$ and a strongly indiscernible tree $(a_{\eta})_{\eta\in 2^{<\kappa'}}$ which witness that $\widetilde T$ has ATP, and each $a_{\eta}$ is algebraically closed in $\widetilde T$;
    \item some $\rho \in 2^{\kappa}$ and $b_i \models \varphi(x, a_{\rho})$ for $i = 0, 1, 2$; letting $\bar b_i:=\acl(b_iD)\setminus D$ where $D:=a_{\emptyset}\cap a_0$, the following hold for $i = 1, 2$:
    \begin{enumerate}
        \item $\tp_{T_i}(\bar{b}_0,a_\rho)=\tp_{T_i}(\bar b_i,a_{\rho})=\tp_{T_i}(\bar b_i,a_{\rho^{\frown}0})$ and $\bar b_i\cap \acl(a_{\rho}a_{\rho^{\frown}0})=\emptyset$,
        \item $\acl(b_iD)\cap \acl(a_{\rho}a_{\rho^{\frown}0})=D$,
        \item $\acl(b_ia_{\rho})\cap \acl(a_{\rho}a_{\rho^{\frown}0})=\acl(a_{\rho}) = a_{\rho}$,
        \item $\acl(b_ia_{\rho^{\frown}0})\cap \acl(a_{\rho}a_{\rho^{\frown}0})=\acl(a_{\rho^{\frown}0}) = a_{\rho^{\frown}0}$,
        \item $\acl(b_ia_{\rho})\cap\acl(b_ia_{\rho^{\frown}0})=\acl(b_iD)$.
    \end{enumerate}
    \item $a_{\rho}\ind_D a_{\rho^{\frown}0}$ if $T_1$ is strongly minimal, where $\ind$ is the forking independence in $T_1$.
\end{enumerate}

\end{lemma}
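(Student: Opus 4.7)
The plan is to establish (1)--(4) by applying the NATP criterion Fact \ref{fact:criterion_NATP} in each reduct $T_1, T_2$ and then invoking the modular-pregeometry or strong-minimality hypothesis on $T_1$ to pin down the intersection data. For (1)--(2), since $\widetilde T$ has ATP, I reduce to a witness with a single-variable $x$ via Proposition \ref{prop:ATP_inside_type} on a strongly indiscernible tree via Remark \ref{rem:witness_of_ATP}; compactness stretches the index set to $2^{<\kappa'}$ for prescribed large regular $\kappa$ and $\kappa'>\kappa$. Replacing each node by an enumeration of $\acl(a_\eta)$ and applying the modeling property (Fact \ref{modeling property}) yields the desired strongly indiscernible tree of algebraically closed tuples, with $\varphi$ augmented by dummy variables.

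For (3), pick any $b_0\models\varphi(x,a_\emptyset)$ and relabel via strong indiscernibility so that $b_0\models\varphi(x,a_\rho)$ for any prescribed $\rho$. Put $D:=a_\emptyset\cap a_0 = a_\rho\cap a_{\rho^\frown 0}$ (by strong indiscernibility) and $\bar b_0:=\acl(b_0 D)\setminus D$. Apply Fact \ref{fact:criterion_NATP} (in the small-tuple generalization) to $T_1$ and $T_2$ using the product coloring $c(\eta)=(\tp_{T_1}(a_\eta/\bar b_0),\tp_{T_2}(a_\eta/\bar b_0))$ on $2^\kappa$: by Remark \ref{rem:arbitrary_rho_in_S}, there is a monochromatic universal antichain $S\subseteq 2^\kappa$ such that for every $\rho\in S$ and $i=1,2$ there is $\bar b_i$ with $\bar b_i\equiv^{T_i}_{a_\rho}\bar b_0$ and $(a_{\rho^\frown 0^j})_{j<\kappa'}$ $T_i$-indiscernible over $\bar b_i$; reading $j=0,1$ yields both type equalities in (3a). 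Let $b_i$ be the entry of $\bar b_i$ corresponding to $b_0\in\bar b_0$ under the witnessing $T_i$-automorphism (which fixes $D$ pointwise since $D\subseteq a_\rho$); the pregeometric structure on $T_1$ ensures $\bar b_i=\acl(b_iD)\setminus D$.

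The remaining clauses (3b)--(3e) and the disjointness in (3a) reduce to showing $b_i\notin\acl(a_\rho a_{\rho^\frown 0})$. First, $b_0\notin\acl(a_\rho)$: otherwise $\varphi(x,a_\rho)$ would be algebraic and, by strong indiscernibility, each $\varphi(x,a_\eta)$ would have a uniform finite bound on realizations, contradicting Remark \ref{rem:univ_antic_has_inf_sol}; hence $b_i\notin a_\rho=\acl(a_\rho)$. If $b_i\in\acl(a_\rho a_{\rho^\frown 0})$, then $T_i$-indiscernibility of $(a_{\rho^\frown 0^j})$ over $b_i$ gives $b_i\in\acl(a_{\rho^\frown 0^{2k}}a_{\rho^\frown 0^{2k+1}})$ for every $k<\omega$, so applying Remark \ref{rem:remark_for_modular_pregeometry}(4) to two disjoint such pairs (or a direct Morley-dimension count in the strongly minimal case) forces $b_i\in\acl(D)\subseteq\acl(a_\rho)$, a contradiction. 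With $b_i\notin\acl(a_\rho a_{\rho^\frown 0})$ in hand, (3b)--(3e) follow from Remark \ref{rem:remark_for_modular_pregeometry}(1)--(4), and (3b) immediately yields the disjointness in (3a). For (4), in strongly minimal $T_1$, $(a_{\rho^\frown 0^j})_{j<\kappa'}$ is automatically Morley over $b_1$, so $a_\rho\ind[b_1]a_{\rho^\frown 0}$; the standard property $a\ind[C]bd\Rightarrow a\ind[Cd]b$ combined with $D\subseteq a_{\rho^\frown 0}$ upgrades this to $a_\rho\ind[b_1 D]a_{\rho^\frown 0}$, and a short dimension count using $b_1\notin\acl(D)$ gives $a_\rho\ind[D]b_1$; transitivity of forking independence then delivers (4). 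I expect the main obstacle to be reconciling the $T_i$-automorphism image $\bar b_i$ with the $\acl$-reach $\acl(b_iD)\setminus D$ and coordinating the enumerations so both equalities in (3a) hold simultaneously, which rests on the pregeometric hypothesis on $T_1$ and analogous control over $T_2$ implicit in the broader setup.
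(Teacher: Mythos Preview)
Your outline hits the right targets (single-variable reduction, closing up the tree, finding $\rho$ via a monochromatic antichain, then invoking the pregeometry remarks), but two of the load-bearing steps do not go through as written.

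\textbf{Disjointness of $\bar b_i$ from $\acl(a_\rho a_{\rho^{\frown}0})$.} Your argument reads: if $b_i\in\acl(a_\rho a_{\rho^{\frown}0})$, then by $T_i$-indiscernibility of $(a_{\rho^{\frown}0^j})_j$ over $b_i$ we get $b_i\in\acl(a_{\rho^{\frown}0^{2k}}a_{\rho^{\frown}0^{2k+1}})$ for all $k$, and modularity forces $b_i\in D$. For $i=2$ this breaks immediately: $\acl=\acl_{T_1}$, so membership in $\acl(\cdot)$ is expressed by an $\CL_1$-formula, and $T_2$-indiscernibility over $b_2$ says nothing about such formulas. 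Even for $i=1$, Remark~\ref{rem:remark_for_modular_pregeometry}(4) computes $\acl(Ac)\cap\acl(Bc)$ for a \emph{common} singleton $c\notin\acl(AB)$; it does not control $\acl(a_{\rho}a_{\rho^{\frown}0})\cap\acl(a_{\rho^{\frown}0^2}a_{\rho^{\frown}0^3})$ without further independence information that you have not established. (Relatedly, ``pick any $b_0\models\varphi(x,a_\emptyset)$'' and then ``$b_0\notin\acl(a_\rho)$ since otherwise $\varphi(x,a_\rho)$ is algebraic'' is a non sequitur: a specific realization may well be algebraic while the formula is not.) The paper handles all of this differently: it first chooses $b_0\notin\acl(A)$ using Remark~\ref{rem:univ_antic_has_inf_sol}, builds an $A$-indiscernible sequence $(\bar b'_j)_{j<\omega}$ of enumerations of $\acl(b'_jD)\setminus D$, and then invokes Remark~\ref{rem:key_remark_generic_const} to show that $p_i(\bar x,a_\rho)\cup p_i(\bar x,a_{\rho^{\frown}0})$ has \emph{infinitely many} realizations with pairwise distinct components. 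This is exactly what lets one choose $\bar b_i$ disjoint from $\acl(a_\rho a_{\rho^{\frown}0})$, uniformly in $i$, without appealing to $T_i$-indiscernibility at all.

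\textbf{The independence in (4).} You claim that in strongly minimal $T_1$ the sequence $(a_{\rho^{\frown}0^j})_j$ is ``automatically Morley over $b_1$''. Indiscernibility does not imply Morley: in $\ACF$, take $a_j=(c,d_j)$ with a fixed transcendental $c$ and independent $d_j$'s; this is indiscernible but $a_0\not\ind a_1$. So your route to $a_\rho\ind_D a_{\rho^{\frown}0}$ collapses. The paper obtains (4) by a coheir trick built into the tree: it starts with a witness $(a'_\eta)$, sets $D':=\{a'_{0^n}:n<\omega\}$, and redefines $a_\eta:=\acl(a'_{0^{\omega\frown}\eta}D')$. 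Strong indiscernibility then makes $\tp(a'_{0^{\omega\frown}\eta_0}/a'_{0^{\omega\frown}\eta_1}\cdots a'_{0^{\omega\frown}\eta_n}D')$ finitely satisfiable in $D'$ for any $\eta_0\lhd\cdots\lhd\eta_n$, which gives forking independence $a_{\eta_0}\ind_D a_{\eta_1}\cdots a_{\eta_n}$ directly. This is the missing idea in your approach to (4), and it is also what makes (3)(e) go through in the strongly minimal case.
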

\begin{proof}
Suppose $\widetilde T$ has ATP witnessed by $\varphi(x,y)$ with $|x|=1$ and a strongly $\emptyset$-indiscernible tree $(a'_{\eta})_{\eta\in 2^{<\kappa'}}$ with sufficiently large cardinals $\kappa$ and $\kappa'$ where $\kappa$ is regular and $2^{|T|}<\kappa<\kappa'$. Put $D':=a'_{0^{<\omega}}$. Then for $0^{\omega}\unlhd \eta_0\lhd \eta_1\lhd \cdots\lhd \eta_n\in 2^{<\kappa'}$, $\tp_{\widetilde T}(a'_{\eta_0}/a'_{\eta_1}\ldots a'_{\eta_n}D')$ is finitely satisfiable in $D'$ by strong indiscernibility. For each $\eta\in 2^{<\kappa'}$, let $a_{\eta}:=\acl(a'_{0^{\omega\frown }\eta}D')$. By Fact \ref{modeling property} and compactness, we may assume that $(a_{\eta})_{\eta\in 2^{<\kappa'}}$ is strongly $\emptyset$-indiscernible and witnesses ATP of $\varphi(x, y)$ in $\widetilde T$.

Put $A:=\{a_{\eta}:\eta\in 2^{<\kappa'}\}$. Since $(a_{\eta})_{\eta\in 2^{<\kappa'}}$ is strongly indiscernible, for all $\eta\lhd\eta'\in 2^{<\kappa'}$,
$$a_{\eta}\cap a_{\eta'}=a_{\emptyset}\cap a_{0}(=:D),$$
so that $(a_{\eta})_{\eta\in 2^{<\kappa'}}$ is strongly indiscernible over $D$. Note that $\acl(D)=D$.

By Remark \ref{rem:univ_antic_has_inf_sol}, the partial type $\{\varphi(x,a_{\eta}):\eta\in 2^{\kappa}\}$ has infinitely many solutions. Thus, by Ramsey and compactness, there is a nonconstant $A$-indiscernible sequence $(b'_i)_{i\in \omega}$ such that for each $i\in\omega$, 
$$b'_i\models \bigwedge_{\eta\in 2^{\kappa}}\varphi(x,a_{\eta}) \wedge b'_i\notin \acl(A).$$
Furthermore, we may assume that the sequence $(\bar b'_i)_{i\in \omega}$ is $A$-indiscernible where each $\bar b'_i$ is an enumeration of $\acl(b'_iD)\setminus D$ with the first component $b'_i$.
Put $B:=\bigcup_{i<\omega}\bar{b}'_i$. By Remark \ref{rem:arbitrary_rho_in_S}, there is a universal antichain $S\subseteq 2^{\kappa}$ such that for all $\eta,\eta'\in S$, $a_{\eta}\equiv_{B}a_{\eta'}$. Take arbitrary $\rho\in S$.

Let $p_i(\bar x,a_{\rho}):=\tp_{T_i}(\bar{b}'_0/a_\rho)$ and $q_i(\bar x):=p_i(\bar x,a_{\rho})\cup p_i(\bar x,a_{\rho^{\frown}0})$ for $i=1,2$. By Remark \ref{rem:key_remark_generic_const}, $q_i$ has infinitely many solutions with distinct components for $i=1,2$. Thus there are $\bar b_1 \models q_1$ and $\bar b_2\models q_2$ with $\bar b_1 \bar b_2\cap \acl(a_{\rho}a_{\rho^{\frown}0})=\emptyset$. Let $b_i$ be the first component of $\bar b_i$ for $i = 1, 2$, and similarly put  $b_0:=b'_0$ and $\bar{b}_0:=\bar{b}'_0$.

Now we check conditions (3)(b)-(e) and (4).
Fix $i\in \{1,2\}$. Since $b_i\notin \acl(a_{\rho}a_{\rho^{\frown}0})$, by Remark \ref{rem:remark_for_modular_pregeometry}(1)-(3), only (3)(e) and (4) need to be checked. If $\acl$ defines a modular pregeometry in $T_1$, then (3)(e) follows by Remark \ref{rem:remark_for_modular_pregeometry}(4). Finally, suppose $T_1$ is strongly minimal. Recall that we write $\ind$ for the forking independence in $T_1$.

For $0^{\omega}\unlhd\eta_0\lhd \eta_1\lhd\cdots \lhd \eta_n\in 2^{<\kappa'}$, since $\tp_{\widetilde T}(a'_{\eta_0}/a'_{\eta_1}\ldots a'_{\eta_n}D')$ is a coheir extension of $\tp_{\widetilde T}(a'_{\eta_0}/D')$, we have $a'_{\eta_0}\ind_{D'}a'_{\eta_1}\ldots a'_{\eta_n}$ and so $a_{\eta_0}\ind_D a_{\eta_1}\ldots a_{\eta_n}$ by forking calculations. Thus $a_{\rho}\ind_D a_{\rho^{\frown}0}$ by strong indiscernibility, proving (4). Also, since $b_i\notin \acl(a_{\rho}a_{\rho^{\frown}0})$ and $T_1$ is strongly minimal, we have $b_i\ind_D a_{\rho}a_{\rho^{\frown}0}$ and so the set $\{b_i,a_{\rho},a_{\rho^{\frown}0}\}$ is independent over $D$. Therefore, $b_ia_{\rho}\ind_{b_iD} b_ia_{\rho^{\frown}0}$ and hence $\acl(b_ia_{\rho})\ind_{\acl(b_iD)}\acl(b_ia_{\rho^{\frown}0})$, which implies $\acl(b_ia_{\rho})\cap \acl(b_ia_{\rho^{\frown}0})=\acl(b_iD)$.

Now the formula $\varphi(x,y)$, the strongly indiscernible tree $(a_{\eta})_{\eta\in 2^{<\kappa'}}$, and the chosen elements $b_0$, $b_1$, and $b_2$ have all the desired properties.
\end{proof}


\begin{theorem}\label{thm:generic_expansion_preserve_NATP}
If $T_1$, $T_2$ are NATP and either
\begin{itemize}
    \item $\acl = \acl_{T_1} = \acl_{\widetilde T}$ defines a modular pregeometry in $T_1$ or
    \item $T_1$ is strongly minimal,
\end{itemize}
then $\widetilde{T}$ is NATP.
\end{theorem}
\begin{proof}
Suppose $\widetilde{T}$ has ATP.
By Lemma \ref{lem:coheir_witness_ATP}, we have corresponding $\kappa$, $\kappa'$, $\varphi(x, y)$, $(a_\eta)_{\eta \in 2^{< \kappa'}}$, $\rho \in 2^\kappa$ and $b_i$ for $i = 0, 1, 2$.


Let $\bar{c}_0:=\acl(b_0 a_\rho)\setminus \bar{b}_0 a_\rho$.
For each $i=1,2$, let
\begin{center}
$r_i(\bar{x},\bar{y},a_\rho):=\operatorname{qftp}_{T_i}(\bar{b}_0,\bar{c}_0/a_\rho)$.
\end{center}
By (3)(a) in Lemma \ref{lem:coheir_witness_ATP},
we have $\tp_{T_1}(\bar{b}_0 a_\rho) = \tp_{T_1}(\bar{b}_1 a_\rho) = \tp_{T_1}(\bar{b}_1 a_{\rho\coc0})$
and hence there are extended $\mathcal{L}_1$-elementary maps over $D$,
\begin{center}
$f_\rho:\acl(b_0 a_\rho)\to\acl(b_1 a_\rho), \bar{b}_0 a_\rho\mapsto \bar{b}_1 a_\rho$ and
$f_{\rho\coc0}:\acl(b_0 a_\rho)\to\acl(b_1 a_{\rho\coc0}), \bar{b}_0 a_\rho\mapsto \bar{b}_1 a_{\rho\coc0}$.
\end{center}
Let $\bar{c}_\rho:=f_\rho(\bar{c}_0)$ and $\bar{c}_{\rho\coc0}:=f_{\rho\coc0}(\bar{c}_0)$. Note that $r_1(\bar{x}, \bar{y}_\rho,a_\rho)\cup r_1(\bar{x},\bar{y}_{\rho\coc0},a_{\rho\coc0})$ is realized by $\bar{b}_1 \bar{c}_\rho \bar{c}_{\rho\coc0}$.
Using (3)(a)-(e) in Lemma \ref{lem:coheir_witness_ATP}, it is easy see that $\bar{b}_1 \bar{c}_\rho \bar{c}_{\rho\coc0}$ witnesses that
any finite conjunction of formulas in $r_1(\bar{x}, \bar{y}_\rho,a_\rho)\cup r_1(\bar{x},\bar{y}_{\rho\coc0},a_{\rho\coc0})$ is large in $T_1$.

Similarly, by (3)(a),we have $\tp_{T_2}(\bar{b}_0 a_\rho) = \tp_{T_2}(\bar{b}_2 a_\rho) = \tp_{T_2}(\bar{b}_2 a_{\rho\coc0}).$
Let $M$ be a small model of $T_2$ containing $\acl(b_0b_2 a_\rho a_{\rho\coc0})$.
Since $T_2$ is the model companion of $T_2'$, there is a small model $M'\models T'_2$ extending $M$.
Let $X$ be any set not in the monster model, where $|X|=|M'\setminus (\bar{b}_2 a_\rho)|$ and put $X':=X \bar{b}_2 a_\rho$.
Choose any bijection $f'_\rho:M'\to X'$ such that $f'_\rho(\bar{b}_0 a_\rho)=\bar{b}_2 a_\rho$.
Then $f'_\rho$ induces an $\CL_2$-structure on $X'$, hence $X'\cong_{\CL_2} M'$.
Since $\tp_{T_2}(\bar{b}_0 a_\rho) = \tp_{T_2}(\bar{b}_2 a_\rho)$, by \circled{3}, $\bar{b}_2 a_{\rho}$ in the monster model is an $\mathcal{L}_2$-substructure of $X'$.
So we can apply \circled{4} to obtain an $\CL_2$-embedding $\iota_\rho$ such that $\iota_\rho[X']\cap \acl(b_0b_2 a_\rho a_{\rho\coc0})=\bar{b}_2 a_\rho$. 
Let $\bar{c}'_\rho:=\iota_\rho(f'_\rho(\bar{c}_0))$. 
Then $\bar{b}_2\bar{c}'_\rho\models r_2(\bar{x},\bar{y}_\rho,a_\rho)$, $\bar{b}_2 \cap \bar{c}'_{\rho} = \emptyset$ and $\bar{b}_2 \bar{c}'_{\rho} \cap \acl(a_{\rho} a_{\rho^{\frown}0}) = \emptyset$ by construction.

By the same reasoning as above with $\bar{b}_2 a_{\rho^{\frown}0}$ and $\acl(b_0b_2 a_\rho a_{\rho\coc0} \bar{c}'_\rho)$ instead of $\bar{b}_2 a_{\rho}$ and $\acl(b_0b_2 a_\rho a_{\rho\coc0})$, there is $\bar{c}'_{\rho\coc0}$ such that $\bar{b}_2\bar{c}'_{\rho\coc0}\models r_2(\bar{x},\bar{y}_{\rho\coc0},a_{\rho\coc0})$, $\bar{b}_2 \bar{c}'_{\rho} \cap \bar{c}'_{\rho^{\frown}0} = \emptyset$ and $\bar{b}_2 \bar{c}'_{\rho} \bar{c}'_{\rho^{\frown}0} \cap \acl(a_{\rho} a_{\rho^{\frown}0}) = \emptyset$.
Then 
$r_2(\bar{x},\bar{y}_\rho,a_\rho)\cup r_2(\bar{x},\bar{y}_{\rho\coc0},a_{\rho\coc0})$ is realized by $\bar{b}_2 \bar{c}'_\rho \bar{c}'_{\rho\coc0}$ and thus
any finite conjunction of formulas in $r_2(\bar{x},\bar{y}_\rho,a_\rho)\cup r_2(\bar{x},\bar{y}_{\rho\coc0},a_{\rho\coc0})$ is large in $T_2$.

Now by the definition of $\widetilde T$ and compactness, the quantifier-free type 
\[r_1(\bar{x},\bar{y}_\rho,a_\rho)\cup r_1(\bar{x},\bar{y}_{\rho^\frown0},a_{\rho^\frown0})\cup r_2(\bar{x},\bar{y}_\rho,a_\rho) \cup r_2(\bar{x},\bar{y}_{\rho^\frown0},a_{\rho^\frown0})\]
is consistent with $\widetilde T$. Let $\bar{m},\bar{m}_\rho,\bar{m}_{\rho^\frown0}$ realize this type and $m$ be the first component of $\bar{m}$. Then as $\mathcal{L}$-substructures,
\begin{center}
$\acl(b_0 a_{\rho}) =a_\rho\bar{b}_0\bar{c}_0\simeq_{\CL}\bar{a}_\rho \bar{m}\bar{m}_\rho = \acl(m a_{\rho})$ and
$\acl(b_0 a_{\rho}) =a_\rho\bar{b}_0\bar{c}_0\simeq_{\CL}\bar{a}_{\rho\coc0}\bar{m}\bar{m}_{\rho\coc0} = \acl(m a_{\rho^{\frown}0})$.
\end{center}
By Lemma \ref{lem:CP98 2.6(2)(3)}, $\tp_{\widetilde{T}}(b_0 a_{\rho})=\tp_{\widetilde{T}}(m a_{\rho})=\tp_{\widetilde{T}}(m a_{\rho^{\frown}0})$. Since $\models \varphi(b_0,a_{\rho})$, we have $\models \varphi(m,a_{\rho})\wedge\varphi(m,a_{\rho^{\frown}0})$, a contradiction because ($\varphi, (a_{\eta})_{\eta\in 2^{<\kappa'}}$) witnesses ATP.
\end{proof}

\section{The pair of an algebraically closed field and its distinguished subfield}\label{sec:pair_acf}

In \cite{DKL22}, d'Elb{\'e}e, Kaplan, and Neuhauser gave preservation results on a pair of an algebraically closed field and its distinguished subfield. Namely, if the distinguished subfield is stable, NIP, or NSOP$_1$, then so is the pair structure. In this section, we aim to prove that if the distinguished subfield is NTP$_1$, NTP$_2$, or NATP, then so is the pair structure. Our approach is largely motivated by the proof of the preservation of NIP in \cite[Theorem 5.33]{DKL22}. We first recall several basic facts and notions on a pair of an algebraically closed field and its distinguished subfield from \cite{DKL22}.

Let $T$ be a theory of fields in a language $\CL$ expanding the language of rings, $\CL_{ring}:=\{+,-,\times,0,1\}$ and $\CL_P:=\CL\cup \{P\}$ be an expansion of $\CL$ by adding a unary predicate $P$. We expand the theory $\ACF$ of algebraically closed fields to $\ACF_T$ in $\CL_P$ by adding the following axioms:
\begin{enumerate}
	\item $P$ is a model of $T$.
	\item For every $n$-ary function symbol $f\in \CL\setminus \CL_{ring}$, if $x_0,\ldots,x_{n-1}\in P$, then $f(x_0,\ldots,x_{n-1})\in P$. Or else, if some $x_i\notin P$, $f(x_0,\ldots,x_{n-1})=0$.
	\item For every $n$-ary relation symbol $R\in \CL\setminus\CL_{ring}$, if some $x_i\notin P$, then $\neg R(x_0,\ldots,x_{n_1})$ so that $R\subseteq P^n$.
	\item The degree of the whole model over $P$ is infinite.
\end{enumerate}

For a model $M$ of $\ACF_T$, we write $P_M$ for the set of realizations of $P$ in $M$, $P(A)$ for the field extension of $P$ by $A$, and for two fields $K\subseteq L$, we denote $\trdeg(L/K)$ for the transcendental degree of $L$ over $K$. For a field $K$, $K^a$ is the field theoretic algebraic closure of $K$. Now, we recall several facts on $\ACF_T$.
\begin{fact}\label{fact:key_basic_facts_ACF_T}\cite[Lemma 3.13, Lemma 3.14, Proposition 5.22]{DKL22}
\begin{enumerate}
	\item If $M\models \ACF_T$ is $\kappa$-saturated, then $\trdeg(M/P_M)\ge \kappa$.
	\item Suppose $\trdeg(M/P_M)\ge \kappa$. Let $A,A'\subseteq M$ with $|A|,|A'|< \kappa$. If $f:P_M(A)\rightarrow P_M(A')$ is an isomorphism between fields and the restriction $f|_{P_M}$ is an $\CL$-automorphism, then $f$ can be extended to an automorphism of $M$.
	\item $P$ is stably embedded in $\ACF_T$.
\end{enumerate}
\end{fact}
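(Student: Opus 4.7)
The plan is to verify the three items of this compound Fact sequentially, since each builds on the previous and together they amount to a back-and-forth analysis of $\ACF_T$. I would first prove (1), then use it together with a back-and-forth to establish (2), and finally derive (3) as a corollary of (2).

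For (1), I would argue by saturation. Axiom (4) of $\ACF_T$ asserts that $[M:P_M]$ is infinite, hence any model contains algebraically independent sequences over $P_M$ of arbitrary finite length. Therefore, for each $\alpha<\kappa$, the partial $\CL_P$-type stating that $x_\alpha$ is transcendental over $P_M(\{x_\beta:\beta<\alpha\})$ is finitely satisfiable in $M$, and by $\kappa$-saturation we may realize $\kappa$-many such elements simultaneously, yielding $\trdeg(M/P_M)\ge\kappa$.

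For (2), I would carry out a back-and-forth of length $|M|$ starting from $f$. Enumerate $M$ and build an ascending chain $(f_\alpha)$ of partial field isomorphisms extending $f$, absorbing one new element on the domain or codomain side at each step. If the new element $m$ is algebraic over $P_M(\dom f_\alpha)$, extend $f_\alpha$ uniquely via the minimal polynomial, using algebraic closedness of the codomain. If $m$ is transcendental over $P_M(\dom f_\alpha)$, choose any element transcendental over $P_M(\operatorname{ran} f_\alpha)$; this is possible because $\trdeg(M/P_M)\ge\kappa$ while fewer than $\kappa$ elements have been adjoined to either side. The union $\bigcup_\alpha f_\alpha$ is then a field automorphism of $M$ extending $f$, and it upgrades to an $\CL_P$-automorphism because axioms (2) and (3) of $\ACF_T$ concentrate all non-ring structure inside $P_M$ and $f|_{P_M}$ is assumed an $\CL$-automorphism there.

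For (3), stable embeddedness is a quick consequence of (2). Fix $\phi(x;c)$ defining a subset $X\subseteq P_M^n$ with $c\in M$. For any $c'\in M$ realizing the same field-theoretic type as $c$ over $P_M$, the map $c\mapsto c'$ gives a field isomorphism $P_M(c)\to P_M(c')$ restricting to the identity on $P_M$, which by (2) extends to an $\ACF_T$-automorphism of $M$ fixing $P_M$ pointwise. Such an automorphism fixes $X$ pointwise, so $\phi(\bar{a};c)\leftrightarrow\phi(\bar{a};c')$ for every $\bar{a}\in P_M^n$, and a standard compactness argument yields a formula with parameters in $P_M$ defining $X$. The main obstacle I anticipate is the back-and-forth in (2), specifically keeping the partial isomorphisms $\CL_P$-compatible as new elements are absorbed; but this should reduce cleanly to the given data once axioms (2)-(3) are invoked, leaving only careful bookkeeping.
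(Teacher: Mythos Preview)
The paper does not prove this statement at all; it is quoted verbatim as a fact from \cite{DKL22}, so there is no in-paper argument to compare against. Your outline is in the right spirit, but parts (1) and (2) each contain a step that does not stand as written.

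In (1) you infer from $[M:P_M]=\infty$ that ``any model contains algebraically independent sequences over $P_M$ of arbitrary finite length.'' This is false: with $\CL=\CL_{ring}$ and $T=\Th(\mathbb Q)$, the pair $(\overline{\mathbb Q},\mathbb Q)$ is a model of $\ACF_T$ but has $\trdeg(\overline{\mathbb Q}/\mathbb Q)=0$. What axiom~(4) does give is that for every $n$ there is an element of $M$ of degree $>n$ over $P_M$ (bounded degree would force $[M:P_M]<\infty$); this is exactly what makes the $\emptyset$-type ``$x$ is transcendental over $P$'' finitely satisfiable, and the inductive step then goes through because $M$ is algebraically closed while $P_M(x_{\beta_1},\dots,x_{\beta_k})$ is not. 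So your conclusion survives, but the stated reason must be replaced.

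In (2) your back-and-forth runs for $|M|$ steps, yet your justification for finding a fresh transcendental on the target side (``fewer than $\kappa$ elements have been adjoined'') is only valid for the first $\kappa$ steps; when $|P_M|>\kappa$ one has $|M|>\kappa$ and the bookkeeping breaks down. The clean repair is to skip the back-and-forth: from $\trdeg(M/P_M)\ge\kappa$ and $|A|,|A'|<\kappa$ one gets $\trdeg(M/P_M(A))=\trdeg(M/P_M(A'))$, so send a transcendence basis of $M$ over $P_M(A)$ bijectively to one over $P_M(A')$, extend $f$ to the resulting purely transcendental extensions, and then to their common algebraic closure $M$. (Alternatively, keep the back-and-forth but track the invariant that domain and range have equal transcendence degree over $P_M$ throughout, rather than counting adjoined elements.)

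Your argument for (3) is correct.
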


From now on, we fix a monster model $\mathbb M$ of $\ACF_T$ with $P:=P_{\mathbb M}$. Using the proof of \cite[Theorem 5.33]{DKL22}, we can show the following.
\begin{proposition}\label{prop:transcendental_case}
Let $\mathcal I$ be an index structure and suppose $\mathcal I$-indexed sets have the modeling property.
Take an $\mathcal I$-indiscernible sequence $A=(a_i)_{i\in \mathcal I}$ over $C$ and $b\in \mathbb M^1$ transcendental over $P(AC)$.
Then $A$ is $\mathcal I$-indiscernible over $Cb$. 
\end{proposition}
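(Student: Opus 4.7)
The plan is to verify $\mathcal I$-indiscernibility of $A$ over $Cb$ directly from the definition. I will fix finite tuples $\bar i, \bar j$ in $\mathcal I$ with $\qftp_{\CL_{\mathcal I}}(\bar i) = \qftp_{\CL_{\mathcal I}}(\bar j)$ and show $\tp(a_{\bar i}/Cb) = \tp(a_{\bar j}/Cb)$. Since $A$ is $\mathcal I$-indiscernible over $C$, there is an $\CL_P$-automorphism $\sigma_0$ of $\mathbb M$ fixing $C$ pointwise with $\sigma_0(a_{\bar i}) = a_{\bar j}$, and the task reduces to modifying $\sigma_0$ so as to additionally fix $b$.

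The main tool will be the extension lemma Fact~\ref{fact:key_basic_facts_ACF_T}(2). Restricting $\sigma_0$ gives a field isomorphism $\sigma_0|_{P(Ca_{\bar i})} \colon P(Ca_{\bar i}) \to P(Ca_{\bar j})$ whose restriction to $P$ is an $\CL$-automorphism. Because $b$ is transcendental over $P(AC)$, it is transcendental over both $P(Ca_{\bar i})$ and $P(Ca_{\bar j})$, so $P(Ca_{\bar i}, b)$ and $P(Ca_{\bar j}, b)$ are the rational-function fields in $b$ over the respective bases. I can then define $\tau \colon P(Ca_{\bar i}, b) \to P(Ca_{\bar j}, b)$ as the extension of $\sigma_0|_{P(Ca_{\bar i})}$ by $b \mapsto b$; this is a well-defined field isomorphism, and its restriction $\tau|_P = \sigma_0|_P$ remains an $\CL$-automorphism of $P$. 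Applying Fact~\ref{fact:key_basic_facts_ACF_T}(2) with $Ca_{\bar i}b$ and $Ca_{\bar j}b$ in the roles of the two sets there (both small by the standing conventions on $\mathbb M$) lifts $\tau$ to an automorphism $\tilde\tau$ of $\mathbb M$. Then $\tilde\tau$ fixes $Cb$ and sends $a_{\bar i}$ to $a_{\bar j}$, delivering $a_{\bar i} \equiv_{Cb} a_{\bar j}$.

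The only delicate point is the transcendence of $b$: without it, prescribing $b \mapsto b$ would generally conflict with the action of $\sigma_0$ on the coefficients of the minimal polynomial of $b$ over $P(Ca_{\bar i})$, and $\tau$ could not even be set up as a ring homomorphism. The hypothesis that $b$ is transcendental over $P(AC)$ is exactly what removes this obstruction and makes $b \mapsto b$ a legitimate assignment on both sides. I do not expect the modeling-property hypothesis on $\mathcal I$ to enter the argument; it appears to be a standing assumption carried along for downstream applications where one wishes to ensure a stock of $\mathcal I$-indiscernible objects.
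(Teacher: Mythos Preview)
Your proof is correct and is actually more direct than the paper's. The paper invokes the modeling property: it first produces an $\mathcal I$-indiscernible $A'$ over $Cb$ that is $\mathcal I$-locally based on $A$, argues $A'b \equiv_C Ab'$ for some $b'$ (using that $A$ was already $\mathcal I$-indiscernible over $C$), observes that $b$ and $b'$ are both transcendental over $P(AC)$, and then applies Fact~\ref{fact:key_basic_facts_ACF_T}(2) to the map $P(ACb)\to P(ACb')$ fixing $P(AC)$ to conclude. Your argument bypasses the auxiliary sequence $A'$ entirely: for each pair $\bar i,\bar j$ you restrict a witnessing automorphism to $P(Ca_{\bar i})$, extend it by $b\mapsto b$ using transcendence, and lift by Fact~\ref{fact:key_basic_facts_ACF_T}(2). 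In particular, you are right that the modeling-property hypothesis is not needed for the conclusion; it is there only because the applications in Theorem~\ref{thm:preservation_ACF_T} pair this proposition with Fact~\ref{modeling property}. The trade-off is minor: the paper's route handles the entire sequence $A$ in one automorphism, while yours produces a separate automorphism for each finite tuple, but since $\mathcal I$-indiscernibility is a local condition this costs nothing.
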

\begin{proof}
By the modeling property, there is an $\mathcal I$-indiscernible $A'=(a'_{\eta})_{\eta\in \omega^{<\kappa}}$ over $Cb$, which is $\mathcal I$-based on $A$. Note that $b$ is still transcendental over $P(A'C)$. Since $A$ is $\mathcal I$-indiscernible over $C$, there is $b'$ such that $A'b\equiv_C Ab'$ and $b'$ is transcendental over $P(AC)$. Since both of $b$ and $b'$ are transcendental over $P(AC)$, there is a field isomorphism $f:P(ACb)\rightarrow P(ACb')$ fixing $P(AC)$ pointwise.
By Fact \ref{fact:key_basic_facts_ACF_T}(2), $f$ can be extended to an automorphism of $\mathbb M$ so that $A$ is $\mathcal I$-indiscernible over $Cb$. 
\end{proof}

\subsection{Tree properties}
We recall several notions and facts on tree properties (cf. \cite{Che14, KKS14, TT12}).

\begin{definition}\label{def:treeproperties}$ $
\begin{enumerate}
	\item A formula $\varphi(x,y)$ has the {\em tree property of the first kind (in short, TP$_1$)} if there is a tree  $(a_{\eta})_{\eta\in \omega^{<\omega}}$ such that
	\begin{enumerate}
		\item for all $\eta\in \omega^{\omega}$, $\{\varphi(x,a_{\eta|_{\alpha}}):\alpha<\omega\}$ is consistent, and
		\item for all $\eta,\nu\in \omega^{<\omega}$ with $\eta\perp\nu$, $\{\varphi(x,a_{\eta}),\varphi(x,a_{\nu})\}$ is inconsistent.
	\end{enumerate}
	
	\item A formula $\varphi(x,y)$ has the {\em tree property of the second kind (in short, TP$_2$)} if there is an array $(a_{i,j})_{(i,j)\in\omega\times \omega}$ such that
	\begin{enumerate}
		\item for each $f\in \omega^{\omega}$, $\{\varphi(x,a_{i,f(i)}):i\in \omega\}$ is consistent, and
		\item for each $i\in \omega$ and $j_0\neq j_1\in \omega$, $\{\varphi(x,a_{i,j_0}),\varphi(x,a_{i,j_1})\}$ is inconsistent.
	\end{enumerate}
	
	\item We say that a complete theory $T$ has TP$_1$ (TP$_2$, respectively) if there is a formula having TP$_1$ (TP$_2$, respectively). If not, we say that $T$ is NTP$_1$ (NTP$_2$, respectively).
\end{enumerate}
\end{definition}


Now, we recall some criteria for a theory to have tree properties via indiscernible tree or indiscernible array configuration (cf. \cite{Ahn20, CH19}). 
\begin{fact}\label{fact:witness_treeproperties}
Let $T$ be a complete theory and $\kappa>2^{|T|}$ be a regular cardinal.
\begin{enumerate}
	
	\item \cite[Proposition 3.7]{Ahn20} $T$ is NTP$_1$ if and only if for any strongly indiscernible tree $(a_{\eta})_{\eta\in \omega^{<\kappa}}$ of finite tuples and a finite tuple $b$, there are some $\beta<\kappa$ and $b'$ such that
	\begin{enumerate}
		\item $\tp(b/a_{0^{\beta\frown}0})=\tp(b'/a_{0^{\beta\frown}0})$; and
		\item $(a_{0^{\beta\frown}i})_{i<\omega}$ is indiscernible over $b'$.
	\end{enumerate}
	
	\item \cite[Section 2]{Che14} $T$ is NTP$_2$ if and only if for any $C$-indiscernible array $(a_{i,j})_{(i,j)\in\kappa\times \omega}$ of finite tuples with $\kappa>|T|$ and a finite tuple $b$, there are $\beta< \kappa$ and $b'$ such that
 
	\begin{enumerate}
		\item $b'\equiv_{a_{\beta,0} C}b$; and
		\item $(a_{\beta,j})_{j\in\omega}$ is indiscernible over $b'C$.
	\end{enumerate}
 
\end{enumerate}
\end{fact}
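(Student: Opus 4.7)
The statement is a cited two-part criterion, one clause for NTP$_1$ and one for NTP$_2$, each of the same shape: a theory avoids the tree property if and only if every sufficiently rich strongly indiscernible tree (resp.\ array) can be ``straightened'' over a given tuple $b$ by replacing $b$ with some $b'$ of the same type over a specific base. The plan is to prove each part by proving both directions separately, using a strategy closely parallel to the NATP criterion (Fact \ref{fact:criterion_NATP}) already used throughout the paper; in particular, the strong modeling property (for (1)) and the array modeling property (for (2)) provide the canonical reduction from an arbitrary TP$_i$ witness to a strongly indiscernible tree or array witness of the same TP$_i$ pattern.

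The easy direction in each part is by contradiction. For (1), suppose the criterion holds but some $\varphi(x;y)$ has TP$_1$. By the strong modeling property (Fact \ref{modeling property}(1)) and compactness, I may assume that the TP$_1$ configuration $(a_\eta)_{\eta\in\omega^{<\kappa}}$ is strongly indiscernible, so that for any comparable chain $\{\varphi(x,a_{\eta|_\alpha}):\alpha\}$ is consistent while for any two incomparable nodes $\eta\perp\nu$ the pair $\{\varphi(x,a_\eta),\varphi(x,a_\nu)\}$ is inconsistent. Pick $b\models\{\varphi(x,a_{0^i}):i<\omega\}$. Apply the hypothesis to $(a_\eta)$ and $b$: we get $\beta<\kappa$ and $b'$ with $b'\equiv_{a_{0^{\beta\frown}0}}b$ making $(a_{0^{\beta\frown}i})_{i<\omega}$ indiscernible over $b'$. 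Then $\models\varphi(b',a_{0^{\beta\frown}0})$, hence $\models\varphi(b',a_{0^{\beta\frown}i})$ for all $i$ by indiscernibility over $b'$; but $a_{0^{\beta\frown}0}\perp a_{0^{\beta\frown}1}$ contradicts TP$_1$ inconsistency. Part (2) is the same, replacing ``incomparable'' by ``same row distinct columns'' and the tree configuration by the array configuration with the array modeling property.

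The hard direction is where the real content lies. Assume NTP$_i$ and suppose the criterion fails: there are a strongly indiscernible tree $(a_\eta)_{\eta\in\omega^{<\kappa}}$ and a finite $b$ such that for every $\beta<\kappa$ and every $b''\equiv_{a_{0^{\beta\frown}0}}b$ the sequence $(a_{0^{\beta\frown}i})_{i<\omega}$ fails to be indiscernible over $b''$. For each $\beta$, local failure gives a formula $\psi_\beta(x;y_0,\ldots,y_{n_\beta-1})\in\tp(b)$ and indices witnessing the non-indiscernibility. Since $\kappa>2^{|T|}$, a pigeonhole/coloring argument fixes a single formula $\psi(x;y_0,\ldots,y_{n-1})$ and a single ``failure pattern'' for a cofinal set of $\beta$'s. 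I then use strong indiscernibility of $(a_\eta)$ together with this uniform failure, plus another application of the modeling property, to build a tree configuration in which (i) every path is consistent with $\psi$ (coming from consistency of $\tp(b/a_{0^{\beta\frown}0})$ along the $0$-branch), and (ii) incomparable nodes force an inconsistency (coming from the impossibility of extending the partial type to an indiscernible one simultaneously along distinct branches). This is the TP$_1$-witness. For (2) the analogous construction uses the array modeling property to turn row-wise failure of indiscernibility over any candidate $b''$ into a TP$_2$ array.

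The main obstacle is the hard direction: extracting a genuine TP$_i$-witness from the failure of the criterion. The delicate point is not the pigeonhole that fixes a single formula $\psi$ across $\beta$'s, but the combinatorial arrangement that turns the ``non-indiscernibility'' witnesses into incompatible branches that actually fit together into a strongly indiscernible tree (or a $C$-indiscernible array) with the required inconsistency pattern. This is exactly the step where one needs the modeling property to upgrade an ad-hoc configuration to a genuine tree/array witness, and then needs to verify that the consistency of the path type $\tp(b/\ldots)$ is preserved under the upgrade while the incompatibility on siblings is sharpened into outright inconsistency of $\psi$. Once this is done, both parts fall out uniformly, and one may invoke the cited results of \cite{Ahn20} and \cite{Che14} for the technical bookkeeping.
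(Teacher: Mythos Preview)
The paper does not prove this statement: it is recorded as a \emph{Fact} with explicit citations to \cite[Proposition 3.7]{Ahn20} and \cite[Section 2]{Che14}, and no proof is given in the paper itself. So there is nothing in the paper to compare your argument against.

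Assessing your sketch on its own merits: the easy direction (criterion $\Rightarrow$ NTP$_i$) is essentially correct, with the small caveat that in part (1) you should take $b\models\{\varphi(x,a_{0^\alpha}):\alpha<\kappa\}$ rather than just $\alpha<\omega$, since the $\beta$ produced by the criterion may be infinite.

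The hard direction (NTP$_i$ $\Rightarrow$ criterion), however, has a genuine gap. You write that ``local failure gives a formula $\psi_\beta\in\tp(b)$ and indices witnessing the non-indiscernibility,'' but this is not the right formulation. The hypothesis is that \emph{no} $b'\equiv_{a_{0^{\beta\frown}0}}b$ makes $(a_{0^{\beta\frown}i})_i$ indiscernible; non-indiscernibility over $b$ itself is not what is at stake. The correct step is: by Ramsey and compactness, the failure of \emph{any} such $b'$ to exist is equivalent to the inconsistency of the partial type $\bigcup_{i<\omega}p_\beta(x,a_{0^{\beta\frown}i})$, where $p_\beta(x,y)=\tp(b,a_{0^{\beta\frown}0})$. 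Only then does compactness give a single formula $\varphi_\beta\in p_\beta$ and a bound $k_\beta$ with $\{\varphi_\beta(x,a_{0^{\beta\frown}i}):i<k_\beta\}$ inconsistent. After pigeonholing to a fixed $(\varphi,k)$ on an unbounded set $S\subseteq\kappa$, the path consistency of the eventual witness comes from the fact that $b$ itself realises $\{\varphi(x,a_{0^{\beta+1}}):\beta\in S\}$, and one then uses strong indiscernibility to transport both the path consistency and the $k$-inconsistency along siblings to an arbitrary tree shape, obtaining $k$-TP$_1$ and hence TP$_1$. Your sketch gestures at this but does not articulate the Ramsey/compactness equivalence, nor how path consistency is secured; ``invoke the cited results for the technical bookkeeping'' is precisely where the content lies. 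The same remarks apply mutatis mutandis to part (2).
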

Note that we may take $b$ in Fact \ref{fact:witness_treeproperties} as a singleton because all tree properties are witnessed in a single variable (cf. \cite{Che14, CR16}).
Also, in Fact \ref{fact:witness_treeproperties}(2), we can regard the $(a_{i,j})_{(i,j)\in\kappa\times \omega}$ as an indiscernible array because we can always take a witness of TP$_2$ with an indiscernible array (see \cite[Lemma 5.6]{KKS14}).
\begin{remark}\label{rem:stably_embedded_treeproperty}
Let $\mathbb M$ be a monster model of a complete theory in a language $\CL$ and $\pi(x)$ be an $\CL$-formula which is stably embedded in $\mathbb M$, that is, any definable subset of $\pi(\mathbb M)^n$ is definable over parameters in $\pi(\mathbb M)$. Suppose that the induced substructure on $\pi(\mathbb M)$ is NTP$_1$, NTP$_2$ or NATP. Then Fact \ref{fact:witness_treeproperties} holds when we take a finite tuple $b$ in $\pi(\mathbb M)$ for NTP$_1$ and NTP$_2$ cases, and Fact \ref{fact:criterion_NATP} holds for NATP.
\end{remark}

\subsection{Preservation of tree properties and NATP}

\begin{theorem}\label{thm:preservation_ACF_T}
If $T$ is NTP$_1$, NTP$_2$, or NATP, then so is $\ACF_T$.
\end{theorem}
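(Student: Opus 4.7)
The plan is to verify the indiscernible-tree/array criteria---Fact \ref{fact:witness_treeproperties}(1) for NTP$_1$, Fact \ref{fact:witness_treeproperties}(2) for NTP$_2$, and Fact \ref{fact:criterion_NATP} for NATP---in $\ACF_T$, treating all three properties uniformly. In each case, the data is an indiscernible tree or array of tuples $(a_{\iota})_{\iota \in \mathcal{I}}$ in $\mathbb M$ together with a singleton test element $b \in \mathbb M$, and the goal is to produce a path (or row) indiscernible over some $b' \in \mathbb M$ with $b' \equiv b$ on the designated parameters. Set $A = \bigcup_{\iota} a_{\iota}$.

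The argument dichotomizes on the transcendence of $b$ over $P(A)$. If $b$ is transcendental over $P(A)$, then Proposition \ref{prop:transcendental_case} immediately yields that $(a_{\iota})_{\iota \in \mathcal{I}}$ is $\mathcal{I}$-indiscernible over $b$, so every path or row works with the choice $b' = b$; this disposes of the generic case across all three properties in a single stroke.

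The substantive case is $b \in P(A)^a$. Here the idea is to transport the problem into $P(\mathbb M)$, where the corresponding criterion is available by the assumption on $T$ combined with Remark \ref{rem:stably_embedded_treeproperty}. Pick a finite subset $A_0 \subseteq A$ and a polynomial $F(x, \bar y) \in P[x, \bar y]$ with $F(b, A_0) = 0$, whose coefficient tuple $c$ lies in $P$. By strong indiscernibility we may freely relocate $A_0$ along the tree/array, and by the modeling property applied inside $P$ we extract a strongly indiscernible tree (or indiscernible array) $(c_{\iota})_{\iota \in \mathcal{I}}$ in $P$ that encodes the algebraic data relating $b$ to the original configuration. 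Applying the NATP/NTP$_1$/NTP$_2$ criterion of $T$ in $P$ to $(c_{\iota})$ and $c$ produces a path or row that is indiscernible over some $c' \in P$ with $c' \equiv c$ on the required parameters. To pull this back to $\mathbb M$, let $b'$ be a root in $\mathbb M$ of the polynomial obtained by transporting $F$ through the shift $c \mapsto c'$, chosen so that the induced field isomorphism $P(A_0, b) \to P(A_0, b')$ restricts on $P$ to the $\CL$-automorphism provided by the $T$-witness. Fact \ref{fact:key_basic_facts_ACF_T}(2) then extends this field isomorphism to an automorphism of $\mathbb M$, and this automorphism simultaneously witnesses $b' \equiv b$ on the designated parameters and the required indiscernibility of the chosen path or row over $b'$.

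The principal technical difficulty lies in Case 2: one must carefully construct the $P$-indiscernible tree/array $(c_{\iota})$ so that it is genuinely $\mathcal{I}$-indiscernible inside the induced structure on $P$, retains enough information to recover the algebraic data defining $b$, and is compatible with the lifting step through Fact \ref{fact:key_basic_facts_ACF_T}(2). This requires iterating the modeling property while keeping bookkeeping of how the coefficient data shifts along the tree or array. The NTP$_1$ and NATP arguments are marginally more delicate than NTP$_2$ because the tree rigidity leaves less freedom in the choice of path than an array does in the choice of a row, so the matching between the $T$-witness in $P$ and its lift in $\mathbb M$ must be arranged more carefully.
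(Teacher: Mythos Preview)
Your overall scaffolding---dichotomize on whether $b$ is transcendental over $P(A)$, dispose of the transcendental case with Proposition~\ref{prop:transcendental_case}---matches the paper exactly. The divergence, and the gap, is in the algebraic case.

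You propose to build a new tree (or array) $(c_\iota)$ \emph{inside} $P$, apply the criterion for $T$ there to obtain a path indiscernible over some $c'$, and then lift back to $\mathbb M$ via Fact~\ref{fact:key_basic_facts_ACF_T}(2). The problem is that you never specify what $(c_\iota)$ is or how it is tied to the original tree $(a_\eta)$ in $\mathbb M$. Even if you succeed in making $(c_\iota)$ strongly indiscernible in the induced structure on $P$, finding a path in $(c_\iota)$ indiscernible over $c'$ does not by itself yield a path in $(a_\eta)$ indiscernible over anything; the modeling step you invoke to produce $(c_\iota)$ will in general destroy any pointwise correspondence with the $a_\eta$'s. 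Your lifting step via a field isomorphism and Fact~\ref{fact:key_basic_facts_ACF_T}(2) does not repair this: it moves $b$ to $b'$, but says nothing about why the \emph{original} path $(a_{\rho^\frown 0^i})$ should be indiscernible over $b'$.

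The paper avoids this entirely by \emph{not} moving the tree into $P$. Since $b\in P(A)^a$, choose a finite tuple $c\in P$ with $b\in\acl(Ac)$; the finitely many $a_\eta$'s involved all lie below some level $\lambda$, so $b\in\acl(A_0c)$ for $A_0=(a_\eta)_{\eta\in\omega^{<\lambda}}$. Shift the tree: $a'_\eta:=a_{0^{\lambda\frown}\eta}$ is strongly indiscernible over $A_0$. Now apply the criterion to \emph{this same tree} $(a'_\eta)$ in $\mathbb M$ with test element $c\in P$. Remark~\ref{rem:stably_embedded_treeproperty} (stable embeddedness of $P$ plus the hypothesis on $T$) supplies $\rho$ and $c'\in P$ with $c'\equiv_{a'_\rho A_0}c$ and $(a'_{\rho^\frown 0^i})_i$ indiscernible over $A_0c'$---hence over $\acl(A_0c')$. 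Finally take $b'\in\acl(A_0c')$ with $b'c'\equiv_{a'_\rho A_0}bc$; then the path is indiscernible over $b'$ and $b'\equiv_{a'_\rho}b$. No new tree is built, and Fact~\ref{fact:key_basic_facts_ACF_T}(2) is not needed here at all. The NTP$_1$ and NTP$_2$ cases are handled identically with the corresponding parts of Fact~\ref{fact:witness_treeproperties}, with no extra subtlety for trees versus arrays.
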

\begin{proof}
We first give a proof of the case when $T$ is NATP. Let $A=(a_{\eta})_{\eta\in \omega^{<\kappa}}$ be a strongly indiscernible tree over $\emptyset$ where $\kappa$ is an infinite regular cardinal greater than $|T|$. Take arbitrary $b\in \mathbb M^1$. We aim to find $\rho\in \omega^{<\kappa}$ and $b'$ such that $b'\equiv_{a_{\rho}}b$ and $(a_{\rho^{\frown}0^i})_{i<\kappa}$ is indiscernible over $b'$ (recall Fact \ref{fact:criterion_NATP}).

If $b$ is transcendental over $P(A)$, then by Fact \ref{modeling property}(1) and  Proposition \ref{prop:transcendental_case}, $A$ is strongly indiscernible over $b$ and so for any $\rho$, $(a_{\rho^{\frown}0^i})_{i<\kappa}$ is indiscernible over $b$. Now consider the case when $b$ is in $P(A)^a$. Take a finite tuple $c\in P$ such that $b\in \acl(Ac)$. Notice that there is $\lambda<\kappa$ such that $b\in \acl(A_0c)$ for $A_0:=(a_{\eta})_{\eta\in \omega^{<\lambda}}$. For each $\eta\in \omega^{<\kappa}$, let $a'_{\eta}=a_{0^{\lambda\frown}\eta}$, so that $A':=(a'_{\eta})_{\eta'\in \omega^{<\kappa}}$ is strongly indiscernible over $A_0$. Since $T$ is NATP and $P$ is stably embedded in $\mathbb M$, by Remark \ref{rem:stably_embedded_treeproperty}, there is $\rho\in \omega^{<\kappa}$ and $c'\in P$ such that $c'\equiv_{a'_{\rho}A_0}c$ and $(a'_{\rho^{\frown}0^i})_{i<\kappa}$ is indiscernible over $A_0c'$. Then $(a'_{\rho^{\frown}0^i})_{i<\kappa}$ is indiscernible over $\acl(A_0c')$, hence there is $b'\in \acl(A_0c')$ such that $b'c'\equiv_{a'_{\rho}A_0}bc$ and $(a'_{\rho^{\frown}0^i})_{i<\kappa}$ is indiscernible over $b'$. Now $b'$ and $\rho'=0^{\lambda\frown}\rho$ are the desired ones.

\smallskip

Next, suppose $T$ is NTP$_2$. For $\kappa>|T|$, take a strongly indiscernible array $A:=(a_{i,j})_{(i,j)\in\kappa\times \omega}$ of finite tuples and a singleton $b$. By Fact \ref{fact:witness_treeproperties}(2), we aim to find $\beta<\kappa$ and $b'$ such that  $b'\equiv_{a_{\beta,0}}b$ and $(a_{\beta,j})_{j<\omega}$ is indiscernible over $b'$.

If $b$ is transcendental over $P(A)$, then by Fact \ref{modeling property}(2) and Proposition \ref{prop:transcendental_case}, $A$ is strongly indiscernible over $b$ and so any $\beta<\kappa$ and $b$ work. Consider the case when $b$ is in $P(A)^a$. There is a finite tuple $c\in P$ such that $b\in \acl(Ac)$ and thus there is $\lambda<\kappa$ such that $b\in \acl(A_0 c)$ for $A_0:=(a_{i,j})_{(i,j)\in\lambda\times \omega}$. For each $(i,j)\in\kappa\times \omega$, put $a'_{i,j}=a_{\lambda+i,j}$, so that we have an indiscernible array $(a'_{i,j})_{(i,j)\in\kappa\times \omega}$ over $A_0$. Since $T$ is NTP$_2$ and $P$ is stably embedded in $\mathbb M$, by Remark \ref{rem:stably_embedded_treeproperty}, there are $\beta<\kappa$ and $c'$ such that  $c'\equiv_{a'_{\beta,0}A_0}c$ and $(a'_{\beta,j})_{j<\omega}$ is indiscernible over $c'A_0$. Notice that $\acl(c'A_0)\equiv_{a'_{\beta,0}A_0}\acl(cA_0)$ and $(a'_{\beta,j})_{j<\omega}$ is indiscernible over $\acl(c'A_0)$. So, there is $b'\in\acl(c'A_0)$ such that $b'\equiv_{a'_{\beta,0}A_0}b$ and $(a'_{\beta,j})_{j<\omega}$ is indiscernible over $b'$. Then $b'$ and $\beta'=\lambda+\beta$ are the desired ones.

\smallskip

In the case that $T$ is NTP$_1$, it is similar to the case that $T$ is NTP$_2$ with Fact \ref{fact:witness_treeproperties}(1) and Fact \ref{modeling property}(1) instead of Fact \ref{fact:witness_treeproperties}(2) and Fact \ref{modeling property}(2).
\end{proof}

In \cite[Theorem 4.20]{AKL23}, it is shown that a PAC field $K$ is NATP if the complete system of the Galois group $G(K)$ is NATP. Also, it does hold for NTP$_1$ (cf. \cite{R18}). Conversely, as a corollary of Theorem \ref{thm:preservation_ACF_T}, if $K$ is NATP (and NTP$_1$ respectively), then so is $G(K)$ because $G(K)$ is interpretable in the pair of $K$ and an algebraically closed field containing $K$. So, we have the following.

\begin{corollary}\label{cor:PAC_NATP}
A PAC field $K$ is NATP (and NTP$_1$ respectively) if and only if the complete system of the Galois group $G(K)$ is NATP (and NTP$_1$ respectively).
\end{corollary}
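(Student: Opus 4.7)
The plan is to establish the biconditional by splitting it into two directions, of which only one requires real work. The direction \emph{complete system of $G(K)$ is NATP (resp.\ NTP$_1$) $\Rightarrow$ $K$ is NATP (resp.\ NTP$_1$)} is already in the literature: for NATP it is \cite[Theorem~4.20]{AKL22}, and for NTP$_1$ the analogous implication appears in \cite{R18}. So these two implications are simply cited.

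For the converse, I would argue as follows. Suppose $K$ is a PAC field that is NATP (resp.\ NTP$_1$) and put $T := \Th(K)$. Applying Theorem~\ref{thm:preservation_ACF_T} to $T$, the pair theory $\ACF_T$ is NATP (resp.\ NTP$_1$); in other words, the pair structure $(K^a, K)$ is NATP (resp.\ NTP$_1$).

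Next, I would invoke the standard fact that the complete system of the absolute Galois group $G(K)$ is interpretable in the pair $(K^a, K)$: finite Galois extensions $L/K$ are definable inside $K^a$ using the predicate $P$ singling out $K$, and the transition maps of the inverse system of the $\mathrm{Gal}(L/K)$'s are recovered from this data exactly as in the Chatzidakis--Cherlin--van~den~Dries--Macintyre setup for pseudo-finite and PAC fields. Since NATP and NTP$_1$ are each preserved under interpretation (and in particular under passage to definable reducts), the complete system of $G(K)$ inherits the chosen property, completing the proof.

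The main technical point requiring care is the precise implementation of the interpretability claim in the many-sorted formalism adopted for the complete system in \cite{AKL22}; one may need to pass to $(K^a, K)^{\mathrm{eq}}$ and appeal to the preservation of NATP (resp.\ NTP$_1$) under $T \mapsto T^{\mathrm{eq}}$. Both items are standard and should not constitute a serious obstacle, so the argument reduces essentially to combining Theorem~\ref{thm:preservation_ACF_T} with the cited AKL22/R18 results.
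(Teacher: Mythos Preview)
Your proposal is correct and follows exactly the approach the paper takes: one direction is cited from \cite[Theorem~4.20]{AKL22} and \cite{R18}, and the converse is obtained by applying Theorem~\ref{thm:preservation_ACF_T} to $T=\Th(K)$ and then using that the complete system of $G(K)$ is interpretable in the resulting pair. The paper's proof is just the single sentence preceding the corollary, so your write-up is in fact more explicit than the original; the only minor point to watch is that $(K^a,K)$ need not literally satisfy axiom~(4) of $\ACF_T$, so one should work in a sufficiently saturated model of $\ACF_T$ (whose $P$-part is elementarily equivalent to $K$) rather than $(K^a,K)$ itself.
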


\section{Examples}\label{sec:examples}

In this section, we provide several examples which are of NATP and are of TP$_2$ and SOP. Their constructions are based on the preservation criteria of NATP developed in Section \ref{sec:Fraisse}, \ref{sec:dense_codense}, \ref{sec:gen_pred}, and \ref{sec:pair_acf}.

Also, we confirm that the model companion $\operatorname{ACFO}^{\times}$ of the theory of algebraically closed fields equipped with circular orders invariant under multiplication is NATP, which is known to have TP$_2$ and SOP by \cite[Proposition 3.25]{Tran19}.

\subsection{A parametrization of $\DLO$}
Note that $\DLO$ is the theory of the Fra\"{i}ss\'{e} limit of a Fra\"{i}ss\'{e} class having SAP.
    Let $\DLO_{pfc}$ be the theory obtained by taking parametrization on DLO.
    Since $\DLO$ is NTP$_2$ (hence NATP), by Theorem \ref{thm:parametrization_preserve_NATP}, $\DLO_{pfc}$ is also NATP.
    
    But $\DLO_{pfc}$ is clearly SOP because of the linear order and furthermore, it is TP$_2$ witnessed by the formula $\varphi(x; y_0, y_1, z) \equiv y_0 <_z x <_z y_1$ and compactness:
    Let $M$ be a model of $\DLO_{pfc}$ and $A := O(M)$, $B := P(M)$.
    Since $M$ is a model having randomly parametrized dense linear orders on $A$, for each positive integer $n$, there are $a_0, \ldots, a_n \in A$ and $b_0 ,\cdots, b_{n - 1} \in B$ such that
    \begin{itemize}
        \item for each $j < n$, $a_0 <_{b_j} a_1 <_{b_j} \cdots <_{b_j} a_n$, and
        \item for each function $f \in n^n$,
        $$\bigcap_{j < n} (a_{f(j)}, a_{f(j) + 1})_{b_j} \neq \emptyset,$$
        where $(a, a')_b := \{x \in A: a <_b x <_b a'\}$ for $b \in B$ and $a, a' \in A$.
    \end{itemize}

\subsection{Adding a generic linear order on a parametrization of equivalence relation} Let $T$ be the theory of an equivalence relation $E$ with infinitely many infinite classes, which is the theory of the Fra\"{i}ss\'{e} limit of a Fra\"{i}ss\'{e} class having SAP.
It is well-known that $T_{pfc}$ is NSOP$_1$ (thus NATP) by \cite[Corollary 6.4]{CR16}.

On the other hand, consider a two sorted theory $T'$ having two sorts $P$ and $O$ in a language $\{<\}$, which says that $<$ is a dense linear order without endpoints on $P$, and $O$ is infinite.
It is easy to see that
\begin{itemize}
    \item $T'$ is the theory of the Fra\"{i}ss\'{e} limit of a Fra\"{i}ss\'{e} class having SAP, and
    \item $T'$ is NIP (since it is essentially $\DLO$, it can be seen by (for example,) \cite[Lemma 2.9 and Lemma 2.7]{Sim}), thus NATP.
\end{itemize}

Now let $T^*$ be the theory obtained by taking summation of $T_{pfc}$ and $T'$.
Then by Theorem \ref{thm:sum_preserve_NATP}, $T^*$ is NATP;
but $T^*$ is TP$_2$ and SOP, witnessed by the formula $\varphi(x; y, z) \equiv E_z(x, y)$ and by the linear order on $P$, respectively.

\subsection{Adding a parametrization of $\DLO$ generically on a vector space}

Let $\CL_1:=\{+;0\}$ and $\CL_2:=\{<_z\}$ be a language having two sorts $V$ and $P$ where 
\begin{itemize}
    \item $+$ is a binary function on $V$, $0$ is a constant symbol in $V$, and
    \item for each $z\in P$, $<_z$ is a binary relation on $V$.
\end{itemize}
Put $\CL:=\CL_1\cup \CL_2$. Let $T_1$ be the $\CL_1$-theory of an infinite abelian group $V$ whose elements are of order $2$ with a distinct infinite set $P$, and let $T_2:=\DLO_{pfc}$. Then it is not hard to see that the conditions \circled{1} - \circled{4} in Subsection \ref{ssec:GE NATP} hold for $T_1$ and $T_2$, both of theories $T_1$ and $T_2$ have NATP, and the algebraic closure in $T_1$ defines a modular pregeometry. Thus, by Theorem \ref{thm:generic_expansion_preserve_NATP}, the model companion $\widetilde T$ of $T_1\cup T_2$ is NATP.

\subsection{Adding a generic linear order on an algebraically closed field}\label{subsec:ACF_generic_linearorder}
Let $\CL_1:=\{+,\times;0,1\}$ be the ring language and $\CL_2:=\{<\}$ be the language of linear order.
Let $T_1:=\ACF$ and $T_2:=\DLO$ be the theories in $\CL_1$ and $\CL_2$ respectively. Then it is not hard to see that the conditions \circled{1} - \circled{4} in Subsection \ref{ssec:GE NATP} hold for $T_1$ and $T_2$, both of theories $T_1$ and $T_2$ have NATP, and $T_1$ is strongly minimal. Thus by Theorem \ref{thm:generic_expansion_preserve_NATP}, the model companion $\widetilde T$ of $T_1\cup T_2$ is NATP.

Furthermore, $\widetilde T$ has TP$_2$; we will show that the formula
$$\varphi(x,y_0,y_1,z)\equiv y_0<x+z<y_1$$
witnesses that $\widetilde T$ has TP$_2$. Let $(K,<)\models \widetilde T$. Take a tuple $(d_{i,j})_{i,j<n}$ of triples $d_{i,j}=(b_i,b'_i,c_j)$ of elements in $K$ such that 
\begin{itemize}
    \item $b_i<b'_i<b_{i+1}$ for $i\in \omega$, and
    \item $c_j\neq c_{j'}$ for $j\neq j'\in \omega$.
\end{itemize}
Choose arbitrary $\eta\in n^n$. Take a transcendental element $t$ over $K$. Then there is a linear order $<_{\eta}$ on $K(t)$ extending the order $<$ such that $b_{\eta(i)}<_{\eta}t+c_i<_{\eta}b'_{\eta(i)}$ for all $i<n$ and so $(K(t),<_{\eta})$ is an extension of $(K,<)$. Since $(K,<)$ is existentially closed, there is $a_{\eta}\in K$ such that  $b_{\eta(i)}<_{\eta}a_{\eta}+c_i<_{\eta}b'_{\eta(i)}$ for all $i<n$. Thus, by compactness, the formula $\varphi(x,y_0,y_1,z)\equiv y_0<x+z<y_1$ witnesses that $\widetilde T$ has TP$_2$.

\subsection{The model companion of the theory of algebraically closed fields with circular orders invariant under multiplication}

In \cite{Tran19}, it is proved that the model companion $\ACFO$ of the theory $\ACFO^-$ of algebraically closed fields with multiplicative circular order exists.
In this subsection, we aim to prove that the punctured theory $\ACFO_p^{\times}$ is NATP for either $p=0$ or a prime $p$.
We briefly review the theories and their languages first, in accordance with \cite{Tran19}.
Let
\begin{itemize}
    \item $\mathcal{L}_t := \{0, 1, +, -, \cdot, \Box^{-1}, \lhd\}$ where $\lhd$ is a ternary relation;
    \item $\CL_m:=\{1,\cdot, \Box^{-1}\}$ be the language of groups;
    \item $\CL_{mc}:=\CL_m\cup\{\lhd\}$; and
    \item $\CL_{f}^{\times}:=\CL_m\cup\Sigma$ where $\Sigma:=\{\Sigma_n:n\ge 2\}$ and $\Sigma_n$ is an $n$-ary relation symbol.
\end{itemize}
$\ACFO^-$ is an $\mathcal{L}_t$-theory saying that its models are algebraically closed fields $F$ with circular order on $F^{\times}$, which is invariant under multiplication; for all $a, b, c \in F^{\times}$,
\begin{enumerate}
    \item if $\lhd(a, b, c)$, then $\lhd(b, c, a)$;
    \item if $\lhd(a, b, c)$, then not $\lhd(c, b, a)$;
    \item if $\lhd(a, b, c)$ and $\lhd(a, c, d)$, then $\lhd(a, b, d)$;
    \item if $a, b, c$ are distinct, then either $\lhd(a, b, c)$ or $\lhd(c, b, a)$.
\end{enumerate}
$\ACFO$ is the model companion of $\ACFO^-$, which exists by \cite{Tran19}.
The punctured theory $\ACFO_p^{\times}$ is obtained by expanding $\ACFO_p |_{\mathcal{L}_{mc}}$ to $\mathcal{L}_{mc} \cup \Sigma$ and interpreting the addition $+$ by $\Sigma$; $\Sigma_n(a_1, \ldots, a_n)$ if and only if $a_1 + \cdots + a_{n - 1} = a_n$.
Let
\begin{itemize}
    \item $T_0:=\ACFO_p^{\times}|_{\CL_m}=\GM_p$;
    \item $T_1:=\ACFO_p^{\times}|_{\CL_f^{\times}}=\ACF_p^{\times}$; and
    \item $T_2:=\ACFO_p^{\times}|_{\CL_{mc}}=\GMO_p$.
\end{itemize}
Denote the algebraic closure in $T_i$ by $\acl_i$ for $i=0,1,2$ and the algebraic closure in $\ACFO_p^{\times}$ by $\acl$ (likewise for $\dcl_i$ and $\dcl$). Note that $\acl_1=\acl=\dcl$.

Let $G\models \GM_p$. For $B\subseteq G$, write $\langle B\rangle$ for the subgroup generated by $B$. Note that $\langle AB\rangle$ is divisible if $A$ and $B$ are divisible subgroups of $G$. For terms $t_0(x),t_1(x)\in \CL_m(B)$, the atomic formula $t_0(x)=t_1(x)$ is called a {\bf multiplicative equation} over $B$. We say a multiplicative equation $t_0(x)=t_1(x)$ is {\bf trivial} if it defines $(G')^{|x|}$ in every abelian group $G'$ extending $\langle B\rangle$. If $a\in G^m$ does not satisfy any nontrivial multiplicative equation over $B$, we say that $a$ is {\bf multiplicatively independent} over $B$.

\begin{definition}\cite[Subsection 3.1]{Tran19}
Let $(G,\square)$ be an expansion of a model $G$ of $\GM_p$, $X\subseteq G^n$ be a definable set in $(G,\square)$, and $(\mathbb{G}, \square)$ be a monster elementary extension of $(G, \square)$.
We say $X$ is {\bf multiplicatively large} if  there is $a \in X(\mathbb{G})$, which is multiplicatively independent over $G$.
\end{definition}

\begin{fact}\cite[Proposition 3.12]{Tran19}
Let $(G,\Sigma,\lhd)\models \ACFO_p^{\times}$. For any quantifier-free definable sets $X_0,X_1\subseteq G^n$, which are multiplicatively large in $(G,\Sigma)$ and $(G,\lhd)$ respectively, $$X_0\cap X_1\neq \emptyset.$$
\end{fact}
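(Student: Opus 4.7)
The plan is to produce an element realizing the quantifier-free conditions cutting out $X_0 \cap X_1$ inside a common extension of $(G,\Sigma,\lhd)$, then invoke model-completeness of $\ACFO_p^{\times}$ to pull the realization back into $G$ itself. By the hypothesis that $X_0$ is multiplicatively large in the reduct $(G,\Sigma) \models T_1 = \ACF_p^{\times}$, one obtains a monster $\CL_f^{\times}$-extension $\mathbb{G}_1 \succeq (G,\Sigma)$ together with some $a \in X_0(\mathbb{G}_1)$ that is multiplicatively independent over $G$. Symmetrically, multiplicative largeness of $X_1$ in $(G,\lhd) \models T_2 = \GMO_p$ supplies a monster $\CL_{mc}$-extension $\mathbb{G}_2 \succeq (G,\lhd)$ and some $b \in X_1(\mathbb{G}_2)$ multiplicatively independent over $G$.

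The next step is to glue these two extensions over $G$. Since each of $a$ and $b$ is multiplicatively independent over $G$, the assignment $a \mapsto b$ extends uniquely to an $\CL_m$-isomorphism $\varphi \colon \langle G, a \rangle \to \langle G, b \rangle$ fixing $G$ pointwise. Transport the multiplicatively invariant circular order on $\langle G, b \rangle$ (inherited from $\mathbb{G}_2$) along $\varphi^{-1}$ to obtain a ternary relation on $\langle G, a \rangle$; it extends $\lhd$ on $G$ because $\varphi^{-1}$ fixes $G$, and it remains invariant under multiplication because $\varphi^{-1}$ is a group homomorphism. Now equip $\langle G, a \rangle$ with the restriction of the field-theoretic addition from $\mathbb{G}_1$. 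The resulting $(\CL_{mc} \cup \Sigma)$-structure $H$ extends $(G,\Sigma,\lhd)$, satisfies the universal part of $\ACFO_p^{-}$, and by construction contains $a$ witnessing the quantifier-free conditions defining both $X_0$ and $X_1$.

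Finally, embed $H$ into some model of the model companion $\ACFO_p^{\times}$; by model-completeness this embedding is elementary when restricted to $(G,\Sigma,\lhd)$, so the realization of $X_0 \cap X_1$ found in $H$ reflects down into $G$. The main obstacle is the gluing step: one must verify that amalgamating the additive reduct inherited from $\mathbb{G}_1$ with the circular order transported from $\mathbb{G}_2$ does not violate any axiom of $\ACFO_p^{-}$. This reduces to observing that $\ACFO_p^{-}$ ties $\lhd$ to the ring operations only through multiplication-invariance on the multiplicative group, a property that is preserved by the construction since $\varphi$ is a group isomorphism. Once that verification is in place, the amalgamation $H$ genuinely embeds into a model of $\ACFO_p^{\times}$, and the argument closes.
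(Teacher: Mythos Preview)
The paper does not prove this statement; it is quoted verbatim as a fact from \cite{Tran19}. Your outline is the natural model-companion argument, and the overall architecture (realize each $X_i$ generically in the appropriate reduct, glue over the multiplicative group, push down by model-completeness) is correct.

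There is, however, a genuine gap at the step you yourself flag. You argue that $H=(\langle G,a\rangle,\Sigma\!\restriction,\lhd')$ embeds into a model of $\ACFO_p^{\times}$ because ``$\ACFO_p^{-}$ ties $\lhd$ to the ring operations only through multiplication-invariance.'' That observation is about the \emph{axiomatization} of $\ACFO_p^{-}$, not about its universal theory, and it does not by itself yield the embedding. Concretely, what you need is: having embedded $(\langle G,a\rangle,\Sigma\!\restriction)$ into an algebraically closed field $F$ (say $F=\mathbb G_1$), extend the transported circular order $\lhd'$ from the subgroup $\langle G,a\rangle$ to a multiplication-invariant circular order on all of $F^{\times}$. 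Only then is $(F,\lhd'')$ a model of $\ACFO_p^{-}$ containing $H$, and only then can you pass to the model companion. This extension step is not automatic; it is a substantive (if not hard) fact about invariant circular orders on abelian groups. One clean way to close the gap: note that $G$ already contains all roots of unity, so $F^{\times}/\langle G,a\rangle$ is torsion-free and hence linearly orderable; a lexicographic construction then extends $\lhd'$ to $F^{\times}$. Equivalently, arrange $|\mathbb G_1|=|\mathbb G_2|$ and observe that the group isomorphism $\varphi\colon\langle G,a\rangle\to\langle G,b\rangle$ extends to a group isomorphism $\mathbb G_1\to\mathbb G_2$ (both quotients by $G$ are $\mathbb Q$-vector spaces of the same dimension), along which you can transport the full circular order of $\mathbb G_2$. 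Either route completes your proof.
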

 
\begin{fact}\label{fact:GM_QE_acl}\cite[Proposition 2.4]{Tran19}
$\GM_p$ eliminates quantifiers so that any divisible subgroup of a model of $\GMO_p$ is $\acl_2$-closed.
\end{fact}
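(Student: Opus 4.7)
The statement packages two claims: (i) the $\CL_m$-theory $\GM_p$ eliminates quantifiers, and (ii) as a consequence, every divisible subgroup $D$ of a model of $\GMO_p$ satisfies $\acl_2(D) = D$. The plan is to handle these in order.

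For (i), I would start by identifying models of $\GM_p$ abstractly: such a model is (elementarily equivalent to) the multiplicative group of an algebraically closed field of characteristic $p$, which is a divisible abelian group whose torsion subgroup is $\mathbb{Q}/\mathbb{Z}$ when $p = 0$ and $\bigoplus_{\ell \neq p}\mathbb{Z}(\ell^\infty)$ when $p$ is prime. Such groups fall under Szmielew's classification, with the relevant Szmielew invariants (divisibility, the fixed torsion shape, and the richness of the torsion-free quotient) all captured by the axioms of $\GM_p$. Completeness and quantifier elimination then follow from a standard back-and-forth on $\aleph_1$-saturated models: a partial $\CL_m$-isomorphism between finitely generated subgroups extends one element at a time, using divisibility to match $n$-th roots and using the abundance of both torsion elements of every prescribed order and of torsion-free elements to place the image of a fresh element.

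For (ii), fix a monster model $G \models \GMO_p$ and a divisible subgroup $D \leq G$. Given $a \in G \setminus D$, the plan is to produce infinitely many $\CL_{mc}$-conjugates of $a$ over $D$. Since $D$ is divisible, it is a direct summand: write $G = D \oplus H$ and $a = d\cdot h$ with $d \in D$ and $h \in H \setminus \{1\}$. By (i), the $\CL_m$-type of $a$ over $D$ is determined by its quantifier-free $\CL_m$-type, which depends only on the group-theoretic relation of $a$ with $D$; in particular, for infinitely many $h' \in H$ realizing the quantifier-free $\CL_m$-type of $h$ over $\{1\}$, the elements $a' := d\cdot h'$ realize the $\CL_m$-type of $a$ over $D$. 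To upgrade these $\CL_m$-conjugates to $\CL_{mc}$-conjugates, one exploits the multiplicative invariance of $\lhd$: the $\lhd$-configuration of $a'$ with any tuple drawn from $D$ is the multiplicative translate by $d$ of that of $h'$ with $\{1\}$, so it suffices to realize the $\CL_{mc}$-type of $h$ over $\{1\}$ infinitely often inside $H$. The main obstacle is showing that the induced $\CL_{mc}$-structure on $H$ is sufficiently homogeneous for this, which is where the model-completeness machinery of $\ACFO_p^{\times}$ enters: one invokes a large-intersection principle analogous to \cite[Proposition~3.12]{Tran19} to guarantee that a multiplicatively large and $\lhd$-large set has infinitely many solutions in $H$. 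Stringing the two steps together yields $a \notin \acl_2(D)$, hence $\acl_2(D) = D$.
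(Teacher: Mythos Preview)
The paper records this statement as a cited Fact and gives no proof of its own, so there is nothing in-paper to compare your argument against. On its merits, your sketch for (i) is fine; the problem is (ii).

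Your reduction step is wrong. Translating $\lhd(a',e_1,e_2)$ by $d^{-1}$ using multiplicative invariance yields $\lhd(h',e_1 d^{-1},e_2 d^{-1})$; since $d\in D$, the parameters $e_i d^{-1}$ still range over all of $D$, not over $\{1\}$. Thus the $\CL_{mc}$-type of $a'=dh'$ over $D$ is governed by the $\CL_{mc}$-type of $h'$ over $D$, and the splitting $G=D\oplus H$ has bought you nothing. Your final appeal to ``the model-completeness machinery of $\ACFO_p^{\times}$'' is also out of place: $\GMO_p$ has no additive structure, so the large-intersection principle of \cite[Proposition~3.12]{Tran19}, which concerns the interaction of $\Sigma$-definable and $\lhd$-definable sets, is simply unavailable here.

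The argument that actually works (and is implicit in \cite{Tran19}) passes through the universal cover alluded to in Remark~\ref{rem:tameness_GMO}: a model of $\GMO_p$ is interpreted in a divisible ordered abelian group, where algebraic closure is the $\mathbb Q$-span; pulling this back shows that $\acl_2$ of a set is its divisible hull together with the torsion. Alternatively, one can use quantifier elimination for $\GMO_p^{\diamond}$ (Fact~\ref{fact:GMO_QE}): a divisible subgroup is an $\CL_{mc}^{\diamond}$-substructure, and density of the circular order then furnishes infinitely many realizations of any non-algebraic quantifier-free $1$-type over it. Either way, the order is handled globally rather than via an ad hoc decomposition.
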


\begin{fact}\label{fact:description_type}\cite[Remark 4.8]{Tran19}
$\ACFO^{\times}_p$ is $\acl$-complete, that is, for a tuple $a$ of $G\models \ACFO_p^{\times}$ and for $A\subseteq G$ with $\acl(A)=A$, $$\qftp(\acl(aA)/A)\models \tp(a/A).$$
\end{fact}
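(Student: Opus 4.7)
The plan is to derive $\acl$-completeness by a back-and-forth extension argument in the monster model $\mathbb{G} \models \ACFO_p^{\times}$, reducing the conclusion to iterated applications of the multiplicative-largeness intersection axiom recalled in the excerpt. Passing to a sufficiently saturated $\mathbb{G}$, the hypothesis that $\qftp(\acl(aA)/A)$ is the same as that of $\acl(a'A)/A$ (with matching subtuples for $a$ and $a'$) yields an $\mathcal{L}_t$-isomorphism $f : \acl(aA) \to \acl(a'A)$ fixing $A$ with $f(a) = a'$, and the desired equality $\tp(a/A) = \tp(a'/A)$ will follow by extending $f$ to an $\mathcal{L}_t$-automorphism of $\mathbb{G}$. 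The core of the argument is a one-step back-and-forth lemma: given an $\mathcal{L}_t$-iso $g : K_1 \to K_2$ between small $\acl$-closed subsets of $\mathbb{G}$ and an element $b \in \mathbb{G}$, I want to produce $b' \in \mathbb{G}$ and extend $g$ to an $\mathcal{L}_t$-iso $\tilde g : \acl(K_1 b) \to \acl(K_2 b')$ with $\tilde g(b) = b'$.

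For the case $b \in K_1$ set $b' = g(b)$; otherwise $b$ is field-transcendental over $K_1$ since $\acl = \acl_1$. I would then consider the partial type $P(X)$ over $K_2$ that is the $g$-pushforward of $\qftp_{\mathcal{L}_t}(\acl(K_1 b)/K_1)$, where the variable tuple $X$ is indexed by an enumeration of $\acl(K_1 b) \setminus K_1$; any realization of $P$ in $\mathbb{G}$ supplies the required $b'$ and $\tilde g$. By saturation, it suffices to verify that each finite fragment of $P$ is realized in $\mathbb{G}$.

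A finite fragment of $P$ is a quantifier-free $\mathcal{L}_t$-formula $\varphi(\bar x)$ over $K_2$. Since every atom of $\mathcal{L}_t$ is either a field atom, expressible in $\CL_f^{\times}$ via the $\Sigma$-relations, or an $\lhd$/multiplicative atom, expressible in $\CL_{mc}$, after converting $\varphi$ to disjunctive normal form and choosing the disjunct realized by the $g$-preimage in $\acl(K_1 b)$, one may assume $\varphi = \varphi_1 \wedge \varphi_2$ with $\varphi_1 \in \CL_f^{\times}(K_2)$ and $\varphi_2 \in \CL_{mc}(K_2)$. The preimage tuple in $\acl(K_1 b)$, combined with field-transcendence of $b$ over $K_1$ and Fact \ref{fact:GM_QE_acl}, witnesses that $\varphi_1$ defines a multiplicatively large set in the reduct $(\mathbb{G}, \Sigma) \models \ACF_p^{\times}$ and $\varphi_2$ defines a multiplicatively large set in the reduct $(\mathbb{G}, \lhd) \models \GMO_p$. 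The axiomatization of $\ACFO_p^{\times}$ recalled in the excerpt---that quantifier-free definable sets multiplicatively large in the two reducts intersect in $\mathbb{G}$---then produces a realization of $\varphi$.

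The main obstacle I anticipate is the multiplicative-largeness verification of the reduct-components $\varphi_1$ and $\varphi_2$ when $\bar x$ enumerates a tuple from $\acl(K_1 b)$ whose entries satisfy many non-trivial field-algebraic relations (such as polynomial-root equations over $K_1[b]$), which translate in $\CL_f^{\times}$ into non-trivial multiplicative constraints apparently incompatible with a direct multiplicative independence over $\mathbb{G}$; one needs to show the witnesses can nonetheless be chosen multiplicatively independent over $\mathbb{G}$ in suitable elementary extensions of each reduct, exploiting the divisibility of $\acl$-closures together with the $\lhd$-invariance under multiplication, and possibly enlarging $\bar x$ by auxiliary generics to absorb the forced relations. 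Once these largeness verifications are in place, the intersection axiom supplies the back-and-forth step, and transfinite iteration produces the sought $\mathcal{L}_t$-automorphism of $\mathbb{G}$ extending $f$, completing the proof.
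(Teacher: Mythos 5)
First, note that the paper does not prove this statement at all: it is imported verbatim as a Fact from \cite[Remark 4.8]{Tran19}, so there is no in-paper proof to compare yours against; your attempt has to be judged on its own merits (the closest thing in the paper is the largeness-and-intersection style argument in the proof of Theorem \ref{thm:ACFO_NATP}, which proves a different statement).

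Your overall shape (back-and-forth over $\acl$-closed sets, one-step extension via the intersection property \cite[Proposition 3.12]{Tran19}, saturation) is the right kind of strategy, but the step you yourself flag as ``the main obstacle'' is a genuine gap, and as written the argument fails there. A finite fragment of the pushed-forward type $P(X)$ has its variables indexed by an enumeration of $\acl(K_1b)\setminus K_1$, and such a tuple always satisfies nontrivial multiplicative equations (the $\acl$-closure is a field, so the enumeration contains, e.g., an element together with its powers and products with elements of $K_1$). Consequently the set defined by the fragment is \emph{never} multiplicatively large in either reduct --- largeness requires a realization multiplicatively independent over the ambient model, and no tuple satisfying a nontrivial multiplicative equation can be multiplicatively independent over anything --- so the intersection axiom simply does not apply to $\varphi_1\wedge\varphi_2$ as you have set it up. Your proposed remedy of ``enlarging $\bar x$ by auxiliary generics'' goes in the wrong direction: adding coordinates cannot remove the forced relations among the existing ones. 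What is actually needed is the opposite move: isolate a subtuple that is multiplicatively independent over the base, write the remaining coordinates as $\CL_m$-terms over the base in that subtuple, and substitute these terms into $\varphi_1$ and $\varphi_2$ before testing largeness (this is exactly the manoeuvre carried out in the Claims inside the proof of Theorem \ref{thm:ACFO_NATP}). Even after that reduction there is a second nontrivial verification you leave untouched: the intersection property requires largeness over the model in which you intersect (your monster $\mathbb{G}$, or at least a model containing $K_2$), not merely multiplicative independence over the small set $K_2$; upgrading the one to the other is real work, needing tools of the kind the paper quotes or proves for its own argument (Fact \ref{fact:independent_choice}, Remark \ref{rem:disjoint_amalgam_GMO}, and, on the field side, arguments in the spirit of Lemma \ref{lem:constant_product_of_rational_functions}). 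Until both of these points are supplied, the one-step extension lemma --- and hence the whole back-and-forth --- is not established.
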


\begin{remark}\label{rem:tameness_GMO}
A model of $\GMO_p$ is interpretable in an ordered abelian group, called a {\em universal cover} (see \cite[Lemma 2.17]{Tran19}), and a theory of an ordered abelian group is NIP (see \cite{GS84}). Thus, $\GMO_p$ is NIP and so is NATP.
\end{remark}

To get elimination of quantifiers for $\GMO_p$, we need to expand the language $\CL_{mc}$ properly. Consider a binary relation $\lessdot(x,y)$ defined by $\lhd(1,x,y)$, so that
$$\GMO_p\models \forall xyz\left( \lhd(x,y,z)\leftrightarrow \lessdot(yx^{-1},zx^{-1})\right).$$
Let $G\models \GMO$. For $a\in G$ and a positive integer $n\ge 2$, the {\em winding number} $W_n(a)$ is defined as follows: $$W_n(a):=|\{1\le k\le n-1:a^{k+1}\lessdot a^k\}|.$$ For a positive integer $n\ge 2$ and for $0\le r\le n-1$, let $$P_n^r:=\{x\in G:\exists y\left(y^n=x\wedge W_n(y)=r\right)\},$$ which is definable in the language $\CL_{mc}$. Let $\CL_{mc}^{\diamond}$ be the expansion of $\CL_{mc}$ by adding a unary predicate $P_n^r$ for each $n\ge 2$ and for $0\le r\le n-1$. Note that for two models $A$ and $B$ of $\GMO_p$, if $B$ is an extension of $A$, then
$$A\models P_n^r(a)\Leftrightarrow B\models P_n^r(a)$$
for $a\in A$, a positive integer $n\ge 2$, and $0\le r\le n-1$ because the torsion subgroups of $A$ and $B$ are the same. Let $\GMO_p^{\diamond}$ be the $\CL_{mc}^{\diamond}$-expansion of $\GMO_p$.

\begin{fact}\label{fact:GMO_QE}\cite[Proposition 3.5]{Tran17}
$\GMO_p^{\diamond}$ eliminates quantifiers.
\end{fact}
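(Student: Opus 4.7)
The plan is to establish quantifier elimination for $\GMO_p^{\diamond}$ by reducing to quantifier elimination for divisible ordered abelian groups (DOAG), via the universal cover of a multiplicatively circularly ordered group. Concretely, for each $G \models \GMO_p$ there is an associated divisible ordered abelian group $\tilde G$ (written additively) equipped with a distinguished positive element $c$ such that $G \cong \tilde G / \langle c \rangle$, with the circular order $\lhd$ induced by the linear order on $\tilde G$ using a fundamental domain $[0,c)$, and the torsion of $G$ (the roots of unity allowed by the characteristic $p$) realized inside $(\mathbb Q \cdot c)/\langle c \rangle$. This interpretation already appears in \cite{Tran19} and underlies Remark \ref{rem:tameness_GMO}.

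First I would verify that each winding-number predicate $P_n^r$ has a natural translation into the cover. For $x \in G$ with lift $\tilde x \in [0,c)$, the $n$ distinct $n$-th roots of $x$ in $G$ are represented by $(\tilde x + k c)/n$ for $k = 0, \ldots, n-1$, and the winding number $W_n$ of each such root counts exactly how many times the chain $y, y^2, \ldots, y^n$ crosses $c$; equivalently, it records the integer $\lfloor n \tilde y / c \rfloor$. Hence $P_n^r(x)$ pins down a specific congruence class for $n \tilde x$ modulo $c$, and the family $\{P_n^r\}$ encodes in a quantifier-free way all of the $n$-divisibility data of lifts modulo $\langle c \rangle$.

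Second I would execute the standard back-and-forth test for QE: given $|A|^+$-saturated models $M, N \models \GMO_p^{\diamond}$, a substructure $A \subseteq M$, an $\mathcal L_{mc}^{\diamond}$-embedding $f \colon A \to N$, and an element $a \in M \setminus A$, I would lift $A$ and $a$ into $\tilde M$, push $f(A)$ into $\tilde N$, and read off the DOAG-type of $\tilde a$ over the lift of $f(A)$. Since $f$ preserves $\lhd$ and every $P_n^r$, that type is well-defined and transfers consistently to $\tilde N$, so by QE for DOAG there is $\tilde b \in \tilde N$ realizing its image; projecting yields $b \in N$ with $b \equiv^{\mathrm{qf}}_{f(A)} a$, as required for the QE criterion.

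The principal obstacle will be the torsion bookkeeping: roots of unity correspond to rational multiples of $c$, and one must check that the system $\{P_n^r\}$ correctly determines the rational-coefficient part of $\tilde x / c$ coherently across all primes (with the usual wrinkle that $p$-torsion behaves differently in positive characteristic). A secondary subtlety is that a lift $\tilde x$ is only defined modulo $\langle c \rangle$, so one must verify that the quantifier-free $\mathcal L_{mc}^{\diamond}$-type of $x$ depends only on the $\langle c \rangle$-coset of $\tilde x$ in $\tilde G$, which holds by construction of the predicates but deserves explicit checking before invoking the DOAG transfer.
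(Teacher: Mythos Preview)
The paper does not prove this fact; it simply cites \cite[Proposition 3.5]{Tran17}. So there is no in-paper argument to compare against, and your sketch should be judged on its own merits.

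Your overall strategy---pass to the universal cover, translate the $\lhd$ and $P_n^r$ data into linear-order and congruence data upstairs, then run a back-and-forth---is the natural one and is almost certainly the shape of Tran's argument. The genuine gap is the claim that the cover $\tilde G$ is a \emph{divisible} ordered abelian group. In characteristic $p>0$ this fails: the kernel generator $c$ is not $p$-divisible in $\tilde G$. Indeed, an element $\tilde y$ with $p\tilde y=c$ would project to a nontrivial $p$-th root of unity in $G$, and there are none. More generally, $\tilde G$ is $\ell$-divisible for every prime $\ell\neq p$, but with respect to $p$ it behaves like a $\mathbb{Z}$-group; equivalently, $(\mathbb{Q}\cdot c)/\langle c\rangle$ would be all of $\mathbb{Q}/\mathbb{Z}$, whereas the torsion of $G$ in characteristic $p$ is only the prime-to-$p$ part. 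You flag the $p$-torsion issue as a ``wrinkle'' in the bookkeeping, but it is exactly what breaks the reduction to DOAG.

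The fix is to replace the appeal to DOAG by quantifier elimination for the appropriate class of ordered abelian groups with congruence predicates (in the spirit of Presburger, or more precisely the Schmitt/Cluckers--Halupczok style QE for ordered abelian groups). The winding-number predicates $P_n^r$ are designed precisely so that, upstairs, they record the residue of a lift modulo $(c/n')\mathbb{Z}$ for the relevant $n'$---i.e.\ they supply exactly the congruence information that the non-$p$-divisibility of $c$ forces you to carry. With that in place, your back-and-forth step goes through: the quantifier-free $\mathcal L_{mc}^{\diamond}$-type of $a$ over $A$ determines, and is determined by, the quantifier-free type of a lift $\tilde a$ over $\tilde A\cup\{c\}$ in the expanded ordered-abelian-group language, and QE upstairs lets you realize and project. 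In characteristic $0$ the cover \emph{is} divisible and most $P_n^r$ become quantifier-free definable already in $\mathcal L_{mc}$, so your DOAG shortcut does work there; but for a uniform proof across all $p$ you need the congruence-aware version.
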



\begin{fact}\label{fact:model_companion_GMO-}\cite[Proposition 2.25]{Tran19}
Let $\GMO_p^{-}$ be the theory of circularly ordered multiplicative groups of algebraically closed fields of characteristic $p$. Then $\GMO_p$ is the model-companion of $\GMO_p^{-}$.
\end{fact}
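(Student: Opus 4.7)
The plan is to verify the standard criterion for model companionship: (i) $\GMO_p$ and $\GMO_p^-$ are mutually model-consistent (every model of one embeds into a model of the other), and (ii) $\GMO_p$ is model complete. For (i), every model of $\GMO_p$ arises as $(K^\times, \cdot, \lhd)$ for some algebraically closed field $K$ of characteristic $p$ with multiplication-invariant circular order, which is a prototypical ``generator'' of $\GMO_p^-$, so every model of $\GMO_p$ is already a model of $\GMO_p^-$. Conversely, given $G \models \GMO_p^-$, by definition $G$ embeds into $(K^\times, \cdot, \lhd)$ for some such $K$; after enlarging $K$ to a sufficiently saturated ACF of the same characteristic and extending the circular order compatibly, we obtain a model of $\GMO_p$ containing $G$.

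For (ii), I would leverage Fact~\ref{fact:GMO_QE}: the theory $\GMO_p^{\diamond}$ eliminates quantifiers in the language expanded by the predicates $P_n^r$. Since in any model of $\GMO_p$ the interpretation of each $P_n^r$ is forced by its existential $\CL_{mc}$-definition, model completeness of $\GMO_p$ reduces to checking that each $P_n^r$ is preserved under $\CL_{mc}$-inclusions of models of $\GMO_p$. Upward preservation is immediate; for downward preservation, use that the multiplicative group of an ACF is divisible (so every $n$-th root in $H \supseteq G$ already lies in $G$) together with the fact that the winding number $W_n(y)$ is determined entirely by the circular order on the cyclic subgroup generated by $y$, which is the same in $G$ and $H$.

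The main obstacle I anticipate is the embedding step of (i): extending a translation-invariant circular order on $G$ to one on the full multiplicative group of a sufficiently large ACF of characteristic $p$. By the structure theorem for divisible abelian groups, $K^\times$ splits as a product of its torsion subgroup and a $\BQ$-vector space; the extension procedure must simultaneously enlarge the $\BQ$-vector space and match the induced circular order on roots of unity in $G$ with the canonical circular order on $\mu_\infty$ in $K$ (which in characteristic $p > 0$ is the torsion of $\bar{\BF}_p^\times$). This reduces to a delicate amalgamation problem for circularly ordered divisible abelian groups, which should be the technical heart of the argument.
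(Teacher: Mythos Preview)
The paper does not give its own proof of this statement: it is recorded as a \emph{Fact} and attributed to \cite[Proposition 2.25]{Tran19}, so there is no in-paper argument to compare your proposal against. Your outline follows the standard template (mutual model-consistency plus model completeness), and the model-completeness half is essentially what the paper already records just before Fact~\ref{fact:GMO_QE}: the predicates $P_n^r$ are absolute between $\CL_{mc}$-extensions of models of $\GMO_p$ because the torsion subgroup is fixed, so QE for $\GMO_p^{\diamond}$ yields model completeness for $\GMO_p$. One small correction: divisibility of $G$ alone does not give that ``every $n$-th root in $H\supseteq G$ already lies in $G$''; you need that any two $n$-th roots differ by an $n$-th root of unity together with the fact that the torsion subgroups of $G$ and $H$ coincide (this is exactly the reason the paper gives).

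Where your sketch is loosest is the model-consistency direction. You write that a model $G\models \GMO_p^{-}$ ``by definition'' embeds into $(K^{\times},\cdot,\lhd)$ for some algebraically closed $K$; but $\GMO_p^{-}$ is a first-order theory, and an arbitrary model of it need not literally be the punctured multiplicative group of an ACF. What one actually needs is an axiomatic description of $\GMO_p^{-}$ (divisible abelian group with the correct torsion, together with a translation-invariant circular order) and then the amalgamation/extension argument you anticipate in your final paragraph. That step---extending a multiplication-invariant circular order from $G$ to $K^{\times}$ for a large enough $K$---is exactly the content of \cite[Proposition 2.25]{Tran19} and the surrounding lemmas there, and it is not something you can read off ``by definition''. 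So your plan is sound, but the substantive work all lives in the part you flagged as the ``main obstacle'', and for that you would need to reproduce (or cite) Tran's argument rather than the sketch given here.
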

\noindent Note that any divisible subgroup of a model of $\GMO_p^{-}$ is again a model of $\GMO_p^{-}$. 

\begin{fact}\label{fact:independent_choice}\cite[Lemma 4.12]{Tran17}
Let $(G,\lhd)$ be a saturated model of $\GMO_p$ and $a,b,c$ be small tuples of elements of $G$. If $a$ is multiplicatively independent over $b$, then there is a tuple $a'$ multiplicatively independent over $bc$ such that $\tp_{T_2}(a'b)=\tp_{T_2}(ab)$.
\end{fact}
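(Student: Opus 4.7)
The plan is to realize the required type by a compactness-and-amalgamation argument, leveraging the model-companionship of $\GMO_p$ over $\GMO_p^{-}$ (Fact \ref{fact:model_companion_GMO-}) and quantifier elimination in $\CL_{mc}^{\diamond}$ (Fact \ref{fact:GMO_QE}). By saturation of $(G, \lhd)$, it suffices to show that the partial type
\[ \Sigma(x) := \tp_{T_2}(a/b) \cup \{ t_0(x) \ne t_1(x) : t_0 = t_1 \text{ is a nontrivial multiplicative equation over } bc \} \]
is consistent; any realization of $\Sigma$ in $G$ furnishes the desired $a'$. By Fact \ref{fact:GMO_QE}, $\tp_{T_2}(a/b)$ is determined by the quantifier-free $\CL_{mc}^{\diamond}$-type, which encodes the multiplicative relations, the circular-order data on monomials in $ab$, and the $P_n^r$-predicates (the latter being absolute because the torsion subgroup is canonical in characteristic $p$).

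The core step is the amalgamation. Let $A_1 := \mathrm{div}(\langle ab \rangle)$ and $A_2 := \mathrm{div}(\langle bc \rangle)$, both $\acl_2$-closed subgroups of $G$ by Fact \ref{fact:GM_QE_acl}, and form the divisible hull $H$ of their abstract pushout over $\mathrm{div}(\langle b \rangle)$ in the category of abelian groups. The multiplicative independence of $a$ over $b$ ensures that the copy $a^* \in H$ of $a$ satisfies no nontrivial multiplicative equation over $A_2$, so $a^*$ is multiplicatively independent over $bc$ while its quantifier-free multiplicative type over $b$ coincides with that of $a$. One then extends the circular orders on $A_1$ and $A_2$ to a single multiplicatively invariant circular order on $H$; passing to linearly ordered universal covers (\cite[Lemma 2.17]{Tran19}) reduces this to extending compatible linear orders on divisible ordered abelian subgroups sharing a common suborder, which is classical. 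The resulting $(H, \lhd_H)$ is a divisible subgroup of an algebraically closed field with compatible circular order, hence a model of $\GMO_p^{-}$ extending $A_2$.

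By Fact \ref{fact:model_companion_GMO-}, $(H, \lhd_H)$ embeds over $A_2$ into some model of $\GMO_p$; by saturation of $(G, \lhd)$ the embedding may be taken into $G$, and the image $a'$ of $a^*$ is multiplicatively independent over $bc$ with $\tp_{T_2}(a'b) = \tp_{T_2}(ab)$ (again by Fact \ref{fact:GMO_QE}). The main obstacle is the circular-order amalgamation step, namely producing a multiplicatively invariant circular order on $H$ whose restrictions to $A_1$ and $A_2$ recover the originals. This is precisely where multiplicative independence of $a$ over $b$ is essential: on the linear cover it guarantees that the order constraints contributed by the two sides of the amalgam live in independent quotients of the universal cover, so no incompatibility arises.
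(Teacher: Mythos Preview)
The paper does not prove this statement; it is recorded as a fact cited from \cite[Lemma 4.12]{Tran17}, so there is no proof in the present paper to compare your proposal against.

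Regarding your proposal itself: the overall architecture is reasonable --- reduce to consistency of a partial type, build an abstract amalgam $H$ of the divisible hulls $A_1$ and $A_2$ over the divisible hull of $\langle b\rangle$, equip $H$ with a compatible circular order, then embed via model-companionship and saturation, invoking quantifier elimination in $\CL_{mc}^{\diamond}$ to recover the full type. However, the step you yourself flag as the ``main obstacle'' is not actually carried out, and your sketch of it contains an error. You claim that passing to universal covers reduces the problem to amalgamating linear orders on \emph{divisible} ordered abelian subgroups; but the universal cover of a circularly ordered divisible group sits in a short exact sequence $0\to\mathbb{Z}\to\tilde G\to G\to 0$ and is therefore not divisible, so the ``classical'' amalgamation of divisible ordered abelian groups does not apply directly. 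One also has to ensure that the amalgam of the covers respects the common convex copy of $\mathbb{Z}$, so that quotienting recovers a genuine circular order extending both originals, and that the $P_n^r$-data on $A_1$ is preserved under the eventual embedding into $G$ (this last point does follow once the order is correctly amalgamated, since $A_1$ is already divisible and the winding numbers are computed inside it, but it deserves a sentence). Your closing remark about order constraints living in ``independent quotients of the universal cover'' is a heuristic, not an argument. To close the gap you would need either to carry out the ordered-abelian-group amalgamation explicitly --- showing concretely how multiplicative independence of $a$ over $b$ prevents any clash between the two orders --- or simply defer to Tran's proof in \cite{Tran17}.
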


From the proof of \cite[Proposition 4.13]{Tran17}, we have the following result on the disjoint amalgamation property of models of $\GMO_p^{-}$.
\begin{remark}\label{rem:disjoint_amalgam_GMO}
Let $G$ be a sufficiently saturated model of $\GMO_p$, $A\subseteq B\subseteq G$ be small substructures, which are divisible groups, and $C\models \GMO_p^{-}$ be a divisible group extending $A$. Then there is an $\CL_{mc}$-embedding $\iota:C\rightarrow G$ with $\iota[C]\cap B=A$.  
\end{remark}
\begin{proof}
There is a model $G'$ of $\GMO_p$ extending $C$ because $\GMO_p$ is the model companion of $\GMO_p^{-}$. Note that since $A$, $C$, and $G'$ are divisible, there are unique expansions of $A$, $C$, and $G'$ in the language $\CL_{mc}^{\diamond}$ respectively. Also, since $\GMO_p^{\diamond}$ eliminates quantifiers by Fact \ref{fact:GMO_QE}, $A$ has the same type in $G$ and in $G'$ respectively in the language $\CL_{mc}^{\diamond}$. So, $\tp_{T_2}(C/A)$ is realized in $G$ because $G$ is sufficiently saturated.

We will find $C'\subseteq G$ realizing $\tp_{T_2}(C/A)$ such that $C'\cap B=A$. By compactness, we may assume that there is a finite tuple $\bar c$ in $C$ such that $\bar c$ is multiplicatively independent over $A$ and $C=\acl_{2}(\bar c A)$ . By Fact \ref{fact:independent_choice}, there is a tuple $\bar c'$ multiplicatively independent over $B$ such that $\tp_{T_2}(\bar c'/A)=\tp_{T_2}(\bar c/A)$. Then $C':=\acl_{2}(\bar c'A)$ is the desired one.
\end{proof}

For fields $M_1,\ldots,M_n$ of the same characteristic, write $$M_1\cdot\ldots\cdot M_n:=\{x_1\cdot\ldots\cdot x_n:x_i\in M_i\mbox{ for }i\le n\}.$$
\begin{lemma}\label{lem:constant_product_of_rational_functions}
Let $E_1,\ldots,E_n$ be algebraically closed fields containing an algebraically closed field $K$  where $n$ is a positive integer and $t$ be an element not in $\acl(E_1 \cdots E_n)$. Let $E_1',\ldots,E_n'$ be the algebraic closures of the fields of rational functions $E_1(t),\ldots,E_n(t)$ respectively. Then
$$(E_1'\cdot\ldots\cdot E_n')\cap (E_1\cdots E_n)=E_1\cdot\ldots \cdot E_n$$
where $E_1\cdots E_n$ is the composite field of $E_1,\ldots,E_n$.
\end{lemma}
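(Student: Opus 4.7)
The $\supseteq$ direction is immediate since each $E_i \subseteq E_i'$ and the set of products $E_1\cdot\ldots\cdot E_n$ is contained in the composite field $E_1\cdots E_n$. For the nontrivial $\subseteq$ direction, set $F := E_1\cdots E_n$ and take $x = a_1\cdots a_n$ with $a_i \in E_i'$ and $x \in F$; the case $x=0$ reduces to a trivial factorization. The plan is to extract a Galois norm identity, then read off the conclusion from unique factorization in $F(t)$.

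The first key input is the linear disjointness $F(t) \cap E_i' = E_i(t)$ for each $i$. Since every $E_i$ is algebraically closed (hence perfect and algebraically closed in $F$), the extension $F/E_i$ is regular, and regularity persists under base change by the transcendental $t$; hence $E_i(t)$ is relatively algebraically closed in $F(t)$. Let $L_i$ be the Galois closure of $E_i(t)(a_i)$ over $E_i(t)$ inside $E_i'$, and set $L := F(t)L_1\cdots L_n$, a finite Galois extension of $F(t)$. The linear disjointness gives $F(t)\cap L_i = E_i(t)$, so restriction identifies $\operatorname{Gal}(L_iF(t)/F(t)) \cong \operatorname{Gal}(L_i/E_i(t))$ and yields $N_{L_iF(t)/F(t)}(a_i) = N_{L_i/E_i(t)}(a_i) =: n_i \in E_i(t)$. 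Invoking transitivity of norms and multiplicativity, with $e_i := [L:L_iF(t)]$, I obtain the key identity
\[
x^{[L:F(t)]} \;=\; N_{L/F(t)}(x) \;=\; \prod_{i=1}^{n} N_{L/F(t)}(a_i) \;=\; \prod_{i=1}^{n} n_i^{\,e_i}.
\]

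Now I would analyze the divisor side. Each $n_i \in E_i(t)^{\times}$ factors uniquely as $c_i\prod_{\alpha\in E_i}(t-\alpha)^{m_{i,\alpha}}$ with $c_i \in E_i^{\times}$. Since the left hand side $x^{[L:F(t)]}$ lies in $F$, it has trivial divisor as an element of $F(t)$; combining the factorizations forces $\sum_{i\,:\,\alpha \in E_i} e_i\, m_{i,\alpha} = 0$ for every $\alpha \in F$, so all linear factors cancel and
\[
x^{[L:F(t)]} \;=\; \prod_{i=1}^{n} c_i^{\,e_i} \;\in\; E_1^{\times}\cdot\ldots\cdot E_n^{\times}.
\]
Because each $E_i$ is algebraically closed, I choose $d_i \in E_i^{\times}$ with $d_i^{[L:F(t)]} = c_i^{\,e_i}$; then $\prod_i d_i$ equals $x$ up to an $[L:F(t)]$-th root of unity $\zeta$. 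Since $\zeta$ is algebraic over the prime field, $\zeta \in K \subseteq \bigcap_i E_i$, and absorbing $\zeta^{-1}$ into $d_1$ yields the desired decomposition $x = b_1\cdots b_n$ with $b_i \in E_i$.

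The main obstacle will be the divisor-cancellation step: a single $\alpha$ may belong to several $E_i$ simultaneously, so the vanishing of the divisor of $\prod_i n_i^{e_i}$ must be read as a cancellation summed over the indices $i$, not factor by factor; this is where the assumption that $t$ is transcendental over the whole compositum $F$ (rather than merely over each $E_i$) enters in disguise. The linear-disjointness input $F(t)\cap E_i' = E_i(t)$ that underwrites the norm identification is the other delicate point, depending on the preservation of regular extensions under base change by a purely transcendental extension.
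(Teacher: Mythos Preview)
Your proof is correct, but it takes a genuinely different route from the paper's. The paper argues model-theoretically: since $K$ is algebraically closed and $t\notin\acl(E_1\cdots E_n)$, the type $\tp(t/E_1\cdots E_n)$ is a coheir of $\tp(t/K)$, i.e.\ finitely satisfiable in $K$. Given $a=a_1\cdots a_n$ with $a_i\in E_i'$, one isolates each algebraic type $\tp(a_i/E_i(t))$ by a formula $\varphi_i(x_i,t,b_i)$ with $b_i\in E_i$, and observes that the formula $\exists\bar x\,\bigl(\bigwedge_i\varphi_i(x_i,y,b_i)\wedge a=x_1\cdots x_n\bigr)$ lies in $\tp(t/E_1\cdots E_n)$; finite satisfiability then produces $t'\in K$ and witnesses $a_i'\in E_i(t')=E_i$ with $a=a_1'\cdots a_n'$. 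This is a three-line specialization argument once the coheir observation is made.

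Your argument is purely field-theoretic: a Galois-norm identity $x^{[L:F(t)]}=\prod_i n_i^{e_i}$ with $n_i\in E_i(t)$, followed by a divisor computation in $F(t)$ and root extraction in the algebraically closed $E_i$. What the paper's approach buys is brevity and a uniform mechanism (specialize the generic) that avoids the bookkeeping of norms, degrees, and the cross-cancellation of linear factors among several $E_i$. What your approach buys is independence from any model theory: it would sit comfortably in a pure algebra text, and it makes explicit the two genuine inputs you flagged---the regularity of $F(t)/E_i(t)$ (giving $F(t)\cap L_i=E_i(t)$) and unique factorization in $F(t)$---whereas the paper's proof hides both inside the single fact that the generic type over an algebraically closed field is finitely satisfiable there.
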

\begin{proof} 
We have $t\ind_K E_1\cdots E_n$ because $t\not\in \acl(E_1,\ldots,E_n)$. Since $K$ is an algebraically closed field, the type $\tp(t/E_1\cdots E_n)$ is a coheir extension of $\tp(t/K)$, that is, the type $\tp(t/E_1\cdots E_n)$ is finitely satisfiable in $K$.

It is enough to show that $(E_1'\cdot\ldots\cdot E_n')\cap (E_1\cdots E_n)\subseteq E_1\cdot\ldots \cdot E_n$. Take $a\in (E_1'\cdot\ldots\cdot E_n')\cap (E_1\cdots E_n)$, so there is $a_i\in E_i'$ for $1\le i\le n$ such that $a=a_1\cdot\ldots\cdot a_n$. Take $\varphi_i(x_i,t,b_i)\in \tp(a_i/E_i(t))$ isolating the algebraic type $\tp(a_i/E_i(t))$ for some $b_i\in E_i$ for $1\le i\le n$. Consider a formula
$$\varphi(x_1\cdots x_n,t,b_1\cdots b_n)\equiv \bigwedge_{1\le i\le n} \varphi_i(x_i,t,b_i)\wedge a=x_1\cdot\ldots\cdot x_n.$$
Note that $\exists x_1\cdots x_n \varphi(x_1\cdots x_n,t,b_1\cdots b_n)\in \tp(t/E_1\cdots E_n)$. Thus there is $t'\in K$ such that $\models \exists x_1\cdots x_n \varphi(x_1\cdots x_n,t',b_1\cdots b_n)$ and so there is $a_i' \in E_i(t')=E_i$ for $1\le i\le n$ such that $a=a_1'\cdot\ldots\cdot a_n'\in E_1\cdot\ldots \cdot E_n$.
\end{proof}

\begin{theorem}\label{thm:ACFO_NATP}
For either $p=0$ or a prime $p$, $\ACFO_p^{\times}$ has NATP.
\end{theorem}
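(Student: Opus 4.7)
The plan is to verify the NATP criterion of Fact \ref{fact:criterion_NATP} for $\ACFO_p^{\times}$. Toward contradiction, assume ATP, and fix cardinals $2^{|T|}<\kappa<\kappa'$ with $\kappa$ regular, a strongly indiscernible tree $(a_\eta)_{\eta\in 2^{<\kappa'}}$ of finite tuples (which we may assume to be algebraically closed, since $\acl=\acl_1$), and a single element $b$. After coloring $2^\kappa$ by $\tp(a_\eta/b)$, Remark \ref{rem:arbitrary_rho_in_S} furnishes a universal antichain $S\subseteq 2^\kappa$ along which this type is constant; pick $\rho\in S$ and set $p(x,y):=\tp(b,a_\rho)$. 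By Ramsey, compactness and the strong indiscernibility of $(a_{\rho^{\frown}0^i})_{i<\kappa'}$, it is enough to show that $p(x,a_\rho)\cup p(x,a_{\rho^{\frown}0})$ is consistent: from this one extracts a realization $b'$ of $p(x,a_\rho)$ over which $(a_{\rho^{\frown}0^i})_{i<\kappa'}$ is indiscernible, contradicting the assumed ATP.

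The overall philosophy, borrowed from Theorem \ref{thm:generic_exapnsion_preserve_NATP}, is to realize $\ACFO_p^{\times}$ as an amalgam of its two reducts $T_1=\ACF_p^{\times}$ (strongly minimal, hence NATP) and $T_2=\GMO_p$ (NIP by Remark \ref{rem:tameness_GMO}, hence NATP) along the common multiplicative reduct $T_0=\GM_p$. By the $\acl$-completeness of Fact \ref{fact:description_type}, the type $p(x,a_\rho)$ is determined by the pair of quantifier-free types $p_i(x,a_\rho):=\qftp_{T_i}(\acl(ba_\rho)/a_\rho)$ for $i=1,2$; the plan is therefore to construct a consistent amalgam of the doubled quantifier-free types $p_1(x,a_\rho)\cup p_1(x,a_{\rho^{\frown}0})$ and $p_2(x,a_\rho)\cup p_2(x,a_{\rho^{\frown}0})$ and to realize it by way of the multiplicatively-large intersection property.

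Concretely, the first step is an analog of Lemma \ref{lem:coheir_witness_ATP} yielding witnesses $\bar b_1,\bar b_2$ in the respective reducts realizing the doubled types, with algebraic closures meeting only in $D:=a_\rho\cap a_{\rho^{\frown}0}$. For $T_1$, strong minimality provides forking independence and hence the closure-intersection identity $\acl_1(\bar b_1 a_\rho)\cap\acl_1(\bar b_1 a_{\rho^{\frown}0})=\acl_1(\bar b_1 D)$; Lemma \ref{lem:constant_product_of_rational_functions} will be pivotal for ensuring that field-theoretic composite extensions respect the generic behaviour needed to make the associated $T_1$-definable set multiplicatively large. For $T_2$, Fact \ref{fact:independent_choice} controls multiplicative independence, and Remark \ref{rem:disjoint_amalgam_GMO} provides the disjoint amalgamation needed to realize $p_2(x,a_{\rho^{\frown}0})$ over a chosen $\bar b_2$ realizing $p_2(x,a_\rho)$ without collapsing intersections, while keeping the circular-order type multiplicatively large.

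Once both types are established to be multiplicatively large in their respective reducts, the multiplicatively-large intersection property of $\ACFO_p^{\times}$ produces a single $b^*$ in the monster realizing $p_1(x,a_\rho)\cup p_1(x,a_{\rho^{\frown}0})\cup p_2(x,a_\rho)\cup p_2(x,a_{\rho^{\frown}0})$. By $\acl$-completeness this $b^*$ realizes $p(x,a_\rho)\cup p(x,a_{\rho^{\frown}0})$, in particular $\varphi(x,a_\rho)\wedge\varphi(x,a_{\rho^{\frown}0})$, contradicting ATP. The main obstacle I anticipate is precisely the verification of multiplicative largeness of the quantifier-free types obtained from the separate NATP arguments: the field-theoretic side requires a careful application of Lemma \ref{lem:constant_product_of_rational_functions} to handle composite fields, while the circular-order side demands that the disjoint amalgamation of Remark \ref{rem:disjoint_amalgam_GMO} be made compatible with the multiplicative independence furnished by Fact \ref{fact:independent_choice}.
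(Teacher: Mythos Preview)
Your plan matches the paper's proof closely: assume ATP, invoke Lemma \ref{lem:coheir_witness_ATP} (available since $T_1=\ACF_p^\times$ is strongly minimal), split via $\acl$-completeness into the $T_1$ and $T_2$ quantifier-free types, verify multiplicative largeness of the doubled types in each reduct, and combine via the multiplicatively-large-intersection axiom of $\ACFO_p^\times$ to realize $\varphi(x,a_\rho)\wedge\varphi(x,a_{\rho^{\frown}0})$, contradicting ATP.

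Two technical ingredients absent from your outline will be essential when you write it out. First, you need to fix a small elementary substructure $G'\prec G$ containing $\acl(a_\rho a_{\rho^{\frown}0})$: multiplicative largeness is relative to $G'$, and the paper's first step is to combine the separate witnesses $\bar b_1,\bar b_2$ from Lemma \ref{lem:coheir_witness_ATP} into a \emph{single} $\bar b$ (disjoint from $G'$) realizing both doubled types simultaneously, before passing to the full types $r_i(\bar x,\bar y,a_\rho):=\qftp_{T_i}(\acl(ba_\rho)/a_\rho)$. Second, and more substantively, on the $T_1$ side Lemma \ref{lem:constant_product_of_rational_functions} alone is not enough for the multiplicative-independence claim: when a product $t_0't_1't_2'$ with factors in $\acl(bD),\acl(ba_\rho),\acl(ba_{\rho^{\frown}0})$ lands in $G'$, forking independence only places it in $\acl(a_\rho a_{\rho^{\frown}0})$, whereas Lemma \ref{lem:constant_product_of_rational_functions} requires it to lie in the composite field $\dcl(a_\rho a_{\rho^{\frown}0})$. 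The paper bridges this gap using the boundary property $B(3)$ of $\ACF_p$ (cf.\ \cite{CH99,GKK13}), which gives $\dcl(\acl(ba_\rho)\acl(ba_{\rho^{\frown}0}))\cap\acl(a_\rho a_{\rho^{\frown}0})=\dcl(a_\rho a_{\rho^{\frown}0})$ for an independent triple over $D$; only then does Lemma \ref{lem:constant_product_of_rational_functions} finish the job.
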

\begin{proof}
Let $(G,\Sigma,\lhd)$ be a monster model of $\ACFO_p^{\times}$. Suppose $\ACFO_p^{\times}$ has ATP. Then since $T_1$ is strongly minimal, by Lemma \ref{lem:coheir_witness_ATP}, we have corresponding $\kappa$, $\kappa'$, $\varphi(x, y)$, $(a_\eta)_{\eta \in 2^{< \kappa'}}$, $\rho \in 2^\kappa$ and $b_i$ for $i = 0, 1, 2$.

We may assume that $b_1$ and $b_2$ are the first components of $\bar b_1$ and $\bar b_2$ respectively. Fix an elementary substructure $G'\prec G$ containing $\acl(a_{\rho}a_{\rho^{\frown}0})$. Without loss of generality, we may assume that $\bar b_1\bar b_2\cap G'=\emptyset$. Indeed, since $\bar b_1 \bar b_2\cap \acl(a_{\rho}a_{\rho^{\frown}0})=\emptyset$, there is $\bar b'_1 \bar b'_2\equiv_{\acl(a_{\rho}a_{\rho^{\frown}0})}\bar b_1 \bar b_2$ with $\bar b'_1 \bar b'_2\cap G'=\emptyset$ and we can replace $\bar b_1 \bar b_2$ by $\bar b'_1 \bar b'_2$.

Put $p_i(\bar x,a_{\rho}):=\tp_{T_i}(\bar b_i/a_{\rho})$ and $q_i(\bar x):=p_i(\bar x,a_{\rho})\cup p_i(\bar x,a_{\rho^{\frown}0})$, so that $\bar b_i\models q_i$ for $i=1,2$.

\begin{claim}
There is a tuple $\bar b$ such that $$\bar b\models \varphi(x,a_{\rho})\wedge q_1(\bar x)\wedge q_2(\bar x)\wedge (\bar x \cap G'=\emptyset).$$
\end{claim}
\begin{proof}
Recall that $p_i(\bar{x},a_\rho)=\tp_{T_i}(\bar{b}_i/a_\rho)=\tp_{T_i}(\bar{b}_0/a_\rho)$ for each $i=1,2$. For any $\bar b'\models q_1(\bar x)|_{\CL_m}\wedge \varphi(x,a_{\rho})=q_2(\bar x)|_{\CL_m}\wedge \varphi(x,a_{\rho})$, $\bar b' D$ forms a divisible group so that $\acl_0(\bar b' D)=\bar b' D$. Take $\bar b'\models q_1(\bar x)\restriction_{\CL_m}=q_2(\bar x)\restriction_{\CL_m}$ such that $\bar b'\cap G'=\emptyset$. Then any multiplicatively independent tuple $a$ of elements in $\bar b'D$ over $D$ is multiplicatively independent over $G'$. Indeed, suppose $a$ is a multiplicatively independent tuple of elements in $\bar b' D$ over $D$ but is multiplicatively dependent over $G'$. Then there is an $\CL_m$-term $t$ such that $t(a)=g'$ and the equation $t(x)=g'$ is nontrivial for some $g'\in G'$. Because $\bar b'\cap G'=\emptyset$, $t(a)=g'\in (\bar b' D\cap G')=D$. But $a$ is multiplicatively independent over $D$, so the equation $t(x)=g'$ is trivial, a contradiction.

Take a formula $\varphi_i(\tilde x)\in q_i(\bar x)\wedge\varphi(x,a_{\rho})$ where $\tilde x=(x_{j_0},\ldots,x_{j_n})$ is a finite subtuple of $\bar{x}$,
$\bar x_k=(x_{k_0},\ldots,x_{k_m})$, and $\CL_m(D)$-terms $t_{j_0},\ldots,t_{j_n}$ such that
\begin{itemize}
	\item $q_1(\bar x)\restriction_{\CL_m}\models (\bar x_k\mbox{ is multiplicatively independent over }D)$; and
	\item $q_1(\bar x)\restriction_{\CL_m}\models x_{j_0}=t_{j_0}(
	\bar x_k)\wedge\cdots\wedge x_{j_n}=t_{j_n}(\bar x_k)$.
\end{itemize}
\noindent Put $\tilde t=(t_{j_0},\ldots,t_{j_n})$. Take an arbitrary finite subset $S$ of $G'$. Let
$$\psi_1(\bar x_k)\equiv \varphi_1(\tilde t(\bar x_k))\wedge (\tilde t(\bar x_k)\cap S=\emptyset) \text{ and }
\psi_2(\bar x_k)\equiv \varphi_2(\tilde{ t}(\bar x_k)),$$
which are multiplicatively large in $G'$, witnessed by the solutions $\bar b_i$ of $q_i$ with $\bar b_i\cap G'=\emptyset$ for $i=1,2$. Since $G'\models \ACFO_p^{\times}$, the formula $\psi_1(\bar x_k)\wedge \psi_2(\bar x_k)$ is realized in $G'$ and so the formula $\varphi_1(\tilde x)\wedge \varphi_2(\tilde x)\wedge (\tilde x\cap S=\emptyset)$ is realized in $G'$. By compactness, the partial type $\varphi(x,a_{\rho})\wedge q_1(\bar x)\wedge q_2(\bar x)\wedge (\bar x \cap G'=\emptyset)$ is consistent.
\end{proof}

Let $b$ be the first component of $\bar b$. Since $\bar b\cap G'=\emptyset$, by strong minimality of $T_1$, $b\ind_D G'$ and so the set $\{b,a_{\rho},a_{\rho^{\frown}0}\}$ is independent over $D$. By the forking calculation, we have that
\begin{itemize}
	\item $\acl(bD)\cap \acl(a_{\rho}a_{\rho^{\frown}0})=D$;
	\item $\acl(ba_{\rho})\cap \acl(a_{\rho} a_{\rho^{\frown}0})=\acl(a_{\rho})=a_{\rho}$;
	\item $\acl(ba_{\rho^{\frown}0})\cap \acl(a_{\rho} a_{\rho^{\frown}0})=\acl(a_{\rho^{\frown}0})=a_{\rho^{\frown}0}$; and
	\item $\acl(ba_{\rho})\cap \acl(ba_{\rho^{\frown}0})=\acl(bD)$.
\end{itemize}
Since $\bar b\models q_1(\bar x)$ and $a_{\rho}\equiv_D a_{\rho^{\frown}0}$, there is an $\CL^{\times}_f(D)$-isomorphism $$f:\acl(ba_{\rho})\rightarrow \acl(ba_{\rho^{\frown}0}),\bar ba_{\rho}\mapsto \bar b a_{\rho^{\frown}0}.$$
\noindent Let $\bar c_{\rho}$ be an enumeration of $\acl(ba_{\rho})\setminus \bar b a_{\rho}$ and $\bar c_{\rho^{\frown}0}=f(\bar c_{\rho})$, so that
\begin{itemize}
	\item $\bar c_{\rho}\cap \bar c_{\rho^{\frown}0}=\emptyset$;
	\item $\acl(ba_{\rho})=\bar c_{\rho}\bar b a_{\rho}$; $\acl(ba_{\rho^{\frown}0})=\bar c_{\rho^{\frown}0}\bar ba_{\rho^{\frown}0}$; and
	\item $f(\bar c_{\rho}\bar b a_{\rho})=\bar c_{\rho^{\frown}0}\bar ba_{\rho^{\frown}0}$.
\end{itemize}
Let $r_1(\bar x,\bar y,a_{\rho})=\tp_{T_1}(\bar b, \bar c_{\rho} /a_{\rho})$. Note that $r_1(\bar x,\bar y_{\rho},a_{\rho})\cup r_1(\bar x,\bar y_{\rho^{\frown}0},a_{\rho^{\frown}0})$ is realized by $\bar b\bar c_{\rho}\bar c_{\rho^{\frown}0}$.

\begin{claim}\label{claim:MI_in_ACF}
Let $\bar b'\bar c_{\rho}'\bar c_{\rho^{\frown}0}'\models r_1(\bar x,\bar y_{\rho},a_{\rho})\cup r_1(\bar x,\bar y_{\rho^{\frown}0},a_{\rho^{\frown}0})$ with $\bar b'\cap G'=\emptyset$. Suppose $\bar b'_k$, $\bar c'_{\rho,k}$, and $\bar c'_{\rho^{\frown}0,k}$ are subtuples of $\bar b'$, $\bar c_{\rho}'$, and $\bar c_{\rho^{\frown}0}'$ respectively such that
\begin{itemize}
	\item $\bar b'_k$ is multiplicatively independent over $D$;
	\item $\bar c'_{\rho,k}$ is multiplicatively independent over $\bar b' a_{\rho}$; and
	\item $\bar c'_{\rho^{\frown}0,k}$ is multiplicatively independent over $\bar b' a_{\rho^{\frown}0}$.
\end{itemize} 
Then the tuple $\bar b'_k\bar c'_{\rho,k}\bar c'_{\rho^{\frown}0,k}$ is multiplicatively independent over $G'$.
\end{claim}
\begin{proof}
Suppose not. Then there are $\CL_m$-terms $t_0$, $t_1$, and $t_2$, where one of the equations $t_1=1$, $t_2=1$, $t_3=1$ is non-trivial, and an element $g'\in G'$ such that $t_0(\bar b'_k)t_1(\bar c'_{\rho,k})t_2(\bar c'_{\rho^{\frown}0,k})=g'$. Put $t_0':=t_0(\bar b'_k)$, $t_1':=t_1(\bar c'_{\rho,k})$, and $t_2':=t_2(\bar c'_{\rho^{\frown}0,k})$.

Since $\acl(b'a_{\rho}a_{\rho^{\frown}0})\ind_{\acl(a_{\rho}a_{\rho^{\frown}0})}G'$ and $t_0(\bar b'_k)t_1(\bar c'_{\rho,k})t_2(\bar c'_{\rho^{\frown}0,k})\in \acl(b'a_{\rho}a_{\rho^{\frown}0})\cap G'$, we have that $t_0't_1't_2'\in \acl(a_{\rho}a_{\rho^{\frown}0})$. Also, we have that $$t_0't_1't_2'\in \dcl(\acl(b'a_{\rho})\acl(b'a_{\rho^{\frown}0}))$$ and hence $t_0't_1't_2'\in \dcl(\acl(b'a_{\rho})\acl(b'a_{\rho^{\frown}0}))\cap \acl(a_{\rho}a_{\rho^{\frown}0})$.

Since $\ACF_p$ has the boundary property $B(3)$ (cf. \cite[Remark (2), page 3011]{CH99} and \cite[Lemma 3.3]{GKK13}), $$\dcl(\acl(b'a_{\rho})\acl(b'a_{\rho^{\frown}0}))\cap \acl(a_{\rho}a_{\rho^{\frown}0})=\dcl(a_{\rho}a_{\rho^{\frown}0})$$ and so $t_0't_1't_2'\in \dcl(a_{\rho}a_{\rho^{\frown}0})$. Since $t_0't_1't_2'\in \acl(b'a_{\rho})\cdot \acl(b'a_{\rho^{\frown}0})$, by Lemma \ref{lem:constant_product_of_rational_functions}, $t_0't_1't_2'\in  a_{\rho}\cdot a_{\rho^{\frown}0}$.

Thus there is $\alpha\in a_{\rho}$ and $\beta\in a_{\rho^{\frown}0}$ such that $t_0't_1't_2'=\alpha\beta$ and so $\alpha^{-1} t_0't_1'=(t_2')^{-1}\beta\in \acl(b'a_{\rho})\cap \acl(b'a_{\rho^{\frown}0})=\bar b' D$ because $\acl(b'a_{\rho})\ind_{\bar b' D}\acl(b'a_{\rho^{\frown}0})$. Therefore, $t_2' \in \bar b' a_{\rho^{\frown}0}$, which contradicts that $\bar c'_{\rho^{\frown}0,k}$ is multiplicatively independent over $\bar b' a_{\rho^{\frown}0}$, unless the equation $t_2=1$ is trivial.

Now assume that the equation $t_2=1$ is trivial. Then, we have $t_0't_1'=g'\in \acl(b'a_{\rho})\cap G'=a_{\rho}$ because $b'\ind_{a_{\rho}}G'$. If the equation $t_1=1$ is trivial, then $t_0'=g'\in a_{\rho}$. Since $\bar b_k'$ is multiplicatively independent over $D$, the equation $t_0=1$ is trivial and so all equations $t_0,t_1,t_2=1$ are trivial, which is a contradiction. Thus, the equation $t_1=1$ is non-trivial. Then, the multiplicatively independent tuple $\bar c_{\rho,k}'$ over $\bar b'a_{\rho}$ is a solution of a non-trivial equation $t_1=(t_0')^{-1}g'$ over $\bar b'a_{\rho}$, which is a contradiction.
\end{proof}

Let $r_2(\bar x,\bar y,a_{\rho})=\qftp_{T_2}(\bar b,\bar  c_{\rho}/a_{\rho})$.
\begin{claim}\label{claim:MI_in_GMO}
There is a realization $\bar b'\bar c_{\rho}'\bar c_{\rho^{\frown}0}'$ of the partial type $$r_2(\bar x,\bar y_{\rho},a_{\rho})\cup r_2(\bar x,\bar y_{\rho^{\frown}0},a_{\rho^{\frown}0})$$ satisfying the following property: For any subtuples  $\bar b''$, $\bar c_{\rho}''$, and $\bar c_{\rho^{\frown}0}''$ of $\bar b'$, $\bar c_{\rho}'$, and $\bar c_{\rho^{\frown}0}'$ respectively such that  
\begin{itemize}
    \item $\bar b''$ is multiplicatively independent over $D$;
    \item $\bar c_{\rho}''$ is multiplicatively independent over $\bar b' a_{\rho}$; and
    \item $\bar c_{\rho^{\frown}0}''$ is multiplicatively independent over $\bar b'a_{\rho^{\frown}0}$,
\end{itemize}
the tuple $\bar b'' \bar c_{\rho}''\bar c_{\rho^{\frown}0}''$ is multiplicatively independent over $G'$.
\end{claim}
\begin{proof}
Put $C_{\rho}:=\bar b\bar c_{\rho} a_{\rho}\models \GMO_p^{-}$. Let $C_{\rho^{\frown}0}:=\bar b\bar c_{\rho^{\frown}0} a_{\rho^{\frown}0}\models \GMO_p^{-}$ be induced from the $\CL^{\times}_f(D)$-isomorphism $f:\acl(ba_{\rho})\rightarrow \acl(ba_{\rho^{\frown}0})$, that is, we define a circular order $\lhd$ on the $\CL_m$-structure $C_{\rho^{\frown}0}$ as follows:
For $a,b,c\in C_{\rho^{\frown}0}$,
$$C_{\rho^{\frown}0}\models \lhd(a,b,c)\Leftrightarrow C_{\rho}\models \lhd(f^{-1}(a),f^{-1}(b),f^{-1}(c)).$$
Note that $C_{\rho}$ is a substructure of $G$, and the substructure $\langle \bar b a_{\rho^{\frown}0}\rangle$ of $C_{\rho^{\frown}0}$ is also a substructure of $G$ because $\bar b\models q_2$ (but $C_{\rho^{\frown}0}$ is not necessarily a substructure of $G$). By Remark \ref{rem:disjoint_amalgam_GMO}, we have
\begin{itemize}
	\item divisible subgroups $D_{\rho}$, $D_{\rho^{\frown}0}$, and $E$ of $G$ containing $\langle \bar b G'\rangle$;
	\item $\CL_{mc}$-embeddings $g_{\rho}:C_{\rho}\rightarrow D_{\rho}$, $h_{\rho}:D_{\rho}\rightarrow E$, $g_{\rho^{\frown}0}:C_{\rho^{\frown}0}\rightarrow D_{\rho^{\frown}0}$, and $h_{\rho^{\frown}0}:D_{\rho^{\frown}0}\rightarrow E$
\end{itemize}
such that
\begin{itemize}
	\item $D_{\rho}=\langle g_{\rho}[C_{\rho}]G'\rangle$,  $D_{\rho^{\frown}0}=\langle g_{\rho^{\frown}0}[C_{\rho^{\frown}0}]G'\rangle$ and $E=\langle h_{\rho}[D_{\rho}] h_{\rho^{\frown}0}[D_{\rho^{\frown}0}]\rangle$; and
	\item $g_{\rho}\restriction_{\langle \bar b a_{\rho}\rangle}=\id_{\langle \bar b a_{\rho}\rangle}$, $g_{\rho^{\frown}0}\restriction_{\langle \bar b   a_{\rho^{\frown}0}\rangle}=\id_{\langle \bar b a_{\rho^{\frown}0}\rangle}$ and $h_{\rho}\restriction_{\langle \bar b a_{\rho} a_{\rho^{\frown}0}\rangle}=h_{\rho^{\frown}0}\restriction_{\langle\bar b a_{\rho} a_{\rho^{\frown}0}\rangle}=\id_{\langle\bar b a_{\rho} a_{\rho^{\frown}0}\rangle}$.
\end{itemize}
Thus, we have the following diagram:
$$
\begin{tikzcd}
& \langle \bar b a_{\rho}\rangle \arrow[dl] \arrow[dr] & \langle a_{\rho} a_{\rho^{\frown}0}\rangle \arrow[d] & \langle \bar b a_{\rho^{\frown}0}\rangle \arrow[dl] \arrow[dr]& \\
C_{\rho} \arrow[dr,"g_{\rho}"'] & & \langle \bar b G'\rangle \arrow[dl] \arrow[dr] & & C_{\rho^{\frown}0} \arrow[dl,"g_{\rho^{\frown}0}"]\\
& D_{\rho} \arrow[dr,"h_{\rho}"']& &D_{\rho^{\frown}0} \arrow[dl,"h_{\rho^{\frown}0}"] &\\
& & E& &
\end{tikzcd}
$$
Moreover, we can choose $E$ satisfying that for $f_{\rho}:=h_{\rho}\circ g_{\rho}$ and $f_{\rho^{\frown}0}:=h_{\rho^{\frown}0}\circ g_{\rho^{\frown}0}$,
\begin{itemize}
    \item each multiplicatively independent subtuple of $f_{\rho}[C_{\rho}]$ over $\langle \bar ba_{\rho}\rangle$ is multiplicatively independent over $\langle  f_{\rho^{\frown}0}[C_{\rho^{\frown}0}]G'\rangle$; and
    \item each multiplicatively independent subtuple of $f_{\rho^{\frown}0}[C_{\rho^{\frown}0}]$ over $\langle \bar ba_{\rho^{\frown}0}\rangle$ is multiplicatively independent over $\langle f_{\rho}[C_{\rho}]G'\rangle$.
\end{itemize}   
Then $f_{\rho}(\bar b)f_{\rho}(\bar c_{\rho})f_{\rho^{\frown}0}(\bar c_{\rho^{\frown}0}) $ in $E$ is the desired one.
\end{proof}

\begin{claim}\label{claim:key_result}
The partial type $$r_1(\bar x,\bar y_{\rho},a_{\rho})\cup r_1(\bar x,\bar y_{\rho^{\frown}0},a_{\rho^{\frown}0})\cup r_2(\bar x,\bar y_{\rho},a_{\rho})\cup r_2(\bar x,\bar y_{\rho^{\frown}0},a_{\rho^{\frown}0})$$ is consistent.
\end{claim}
\begin{proof}
Take a formula $\varphi_i(\tilde x,\tilde y_{\rho},\tilde y_{\rho^{\frown}0})\in r_i(\bar x,\bar y_{\rho},a_{\rho})\cup r_i(\bar x,\bar y_{\rho^{\frown}0},a_{\rho^{\frown}0})$ for $i=1,2$. Then there are subtuples $\bar x_k$, $\bar y_{\rho,k}$, and $\bar y_{\rho^{\frown}0,k}$ of $\bar x$, $\bar y_{\rho}$, and $\bar y_{\rho^{\frown}0}$ respectively such that modulo $r_1(\bar x,\bar y_{\rho},a_{\rho})\cup r_1(\bar x,\bar y_{\rho^{\frown}0},a_{\rho^{\frown}0})$,
\begin{itemize}
	\item $\bar x_k$ is multiplicatively independent over $D$;
	\item $\bar y_{\rho,k}$ is multiplicatively independent over $\bar x a_{\rho}$; and
	\item $\bar y_{\rho^{\frown}0,k}$ is multiplicatively independent over $\bar x a_{\rho^{\frown}0}$.
\end{itemize}
Also, there are tuples $\tilde t$, $\tilde t_{\rho}$, and $\tilde t_{\rho^{\frown}0}$ of $\CL_m(D)$, $\CL_m(a_{\rho})$, and $\CL_m(a_{\rho^{\frown}0})$-terms respectively such that modulo $r_1(\bar x,\bar y_{\rho},a_{\rho})\cup r_1(\bar x,\bar y_{\rho^{\frown}0},a_{\rho^{\frown}0})$, $$\tilde x=\tilde t(\bar x_k), \tilde y_{\rho}=\tilde t_{\rho}(\bar x_k,\bar y_{\rho,k}), \tilde y_{\rho^{\frown}0}=\tilde t_{\rho^{\frown}0}(\bar x_k,\bar y_{\rho^{\frown}0,k}).$$
Thus by Claim \ref{claim:MI_in_ACF} and Claim \ref{claim:MI_in_GMO}, the formulas
\begin{center}
$\varphi_1(\tilde t(\bar x_k),\tilde t_{\rho,k}(\bar x_k,\bar y_{\rho,k}),\tilde t_{\rho^{\frown}0,k}(\bar x_k,\bar y_{\rho^{\frown}0}))$ and $\varphi_2(\tilde t(\bar x_k),\tilde t_{\rho,k}(\bar x_k,\bar y_{\rho,k}),\tilde t_{\rho^{\frown}0,k}(\bar x_k,\bar y_{\rho^{\frown}0}))$
\end{center}
\vskip -0.07cm
are multiplicatively large over $G'$.
Hence
$$\varphi_1(\tilde t(\bar x_k),\tilde t_{\rho,k}(\bar x_k,\bar y_{\rho,k}),\tilde t_{\rho^{\frown}0,k}(\bar x_k,\bar y_{\rho^{\frown}0}))\wedge \varphi_2(\tilde t(\bar x_k),\tilde t_{\rho,k}(\bar x_k,\bar y_{\rho,k}),\tilde t_{\rho^{\frown}0,k}(\bar x_k,\bar y_{\rho^{\frown}0}))$$
is realized in $G'$, and so the formula $$\varphi_1(\tilde x,\tilde y_{\rho},\tilde y_{\rho^{\frown}0})\wedge \varphi_2(\tilde x,\tilde y_{\rho},\tilde y_{\rho^{\frown}0})$$ is realized in $G'$. So, the claim follows by compactness.
\end{proof}

Let $\bar b'\bar c_{\rho}'\bar c_{\rho^{\frown}0}'$ be a realization of 
$$r_1(\bar x,\bar y_{\rho},a_{\rho})\cup r_1(\bar x,\bar y_{\rho^{\frown}0},a_{\rho^{\frown}0})\cup r_2(\bar x,\bar y_{\rho},a_{\rho})\cup r_2(\bar x,\bar y_{\rho^{\frown}0},a_{\rho^{\frown}0}).$$
By fact that $a_{\rho}\bar b'\bar c_{\rho}'\cong a_{\rho}\bar b \bar c_{\rho}\cong a_{\rho^{\frown}0}\bar b'\bar c_{\rho^{\frown}0}'$ and by Fact \ref{fact:description_type}, we have that
$$a_{\rho}\bar b'\bar c_{\rho}'\equiv a_{\rho}\bar b \bar c_{\rho}\equiv a_{\rho^{\frown}0}\bar b'\bar c_{\rho^{\frown}0}'.$$
Now it follows that $b'\models \varphi(x,a_{\rho})\wedge \varphi(x,a_{\rho^{\frown}0})$, which is impossible because $(a_{\eta})_{\eta \in 2^{< \kappa'}}$ witnesses ATP with $\varphi(x, y)$.
\end{proof}

\subsection{A pair of an algebraically closed field and its distinguished subfield}
Due to Theorem \ref{thm:preservation_ACF_T}, we have the following examples of NATP theories having SOP and TP$_2$.
\begin{example} The theory $\ACF_T$ has NATP, SOP and TP$_2$ if $T$ is one of the following:
\begin{enumerate}
	\item $T$ is the theory of the Laurent series over a Frobenius field of characteristic $0$ (\cite[Theorem 4.29]{AKL23}).
	\item $T$ is the theory of algebraically closed field expanded by adding a generic linear order (Subsection \ref{subsec:ACF_generic_linearorder}).
	\item $T=\ACFO_p$ (Theorem \ref{thm:ACFO_NATP}).
\end{enumerate}
\end{example}

In \cite[Question 4.22]{AKL23}, we asked whether a pseudo real closed (PRC) field or a pseudo $p$-adically closed (PpC) field is NATP if it has a Galois group having the embedding property. Combining this question and Theorem \ref{thm:preservation_ACF_T}, we ask the following:
\begin{question}
Let $K$ be either a PRC field or a PpC field. Then, $K$ is NATP if and only if the complete system of the Galois group of $K$ is NATP. 
\end{question}

\end{document}